\keywords{Extension class of a vector bundle, torsion freeness, 
infinitesimal Torelli problem, canonical map, holomorphic forms,
Albanese variety, families of varieties, generic Torelli problem.} 
\subjclass[2010]{14C34, 14D07, 14E99, 14J10, 14J40.}
\theoremstyle{plain}
\newtheorem{thm}{Theorem}[subsection]
\newtheorem{thml}{Theorem}
\newtheorem{prop}[thm]{Proposition}
\newtheorem{cor}[thm]{Corollary}
\newtheorem{rem}[thm]{Remark}
\theoremstyle{definition}
\newtheorem{defn}[thm]{Definition}
\newtheorem{rmk}[thm]{Remark}
\newcommand{\sA}{\mathcal{A}}
\newcommand{\sB}{\mathcal{B}}
\newcommand{\sE}{\mathcal{E}}
\newcommand{\sF}{\mathcal{F}}
\newcommand{\sI}{\mathcal{I}}
\newcommand{\sK}{\mathcal{K}}
\newcommand{\sL}{\mathcal{L}}
\newcommand{\sM}{\mathcal{M}}
\newcommand{\sO}{\mathcal{O}}
\newcommand{\sP}{\mathcal{P}}
\newcommand{\sQ}{\mathcal{Q}}
\newcommand{\sX}{\mathcal{X}}
\newcommand{\sY}{\mathcal{Y}}
\newcommand{\mP}{\mathbb{P}}
\newcommand{\Ima}{\mathrm{Im}\,}
\newcommand{\Ker}{\mathrm{Ker}\,}
\newcommand{\Hom}{\mathrm{Hom}}
\numberwithin{equation}{section}
\newcommand{\beba}  {\begin{equation}\begin{array}{rcl}}
\newcommand{\eaee}  {\end{array}\end{equation}}
\def\l@section{\@tocline{1}{0pt}{1pc}{}{}}
\def\l@subsection{\@tocline{2}{0pt}{1pc}{4.6em}{}}
\def\l@subsubsection{\@tocline{3}{0pt}{1pc}{7.6em}{}}
\renewcommand{\tocsection}[3]{%
  \indentlabel{\@ifnotempty{#2}{\makebox[2.3em][l]{%
    \ignorespaces#1 #2.\hfill}}}#3}
\renewcommand{\tocsubsection}[3]{%
  \indentlabel{\@ifnotempty{#2}{\hspace*{2.3em}\makebox[2.3em][l]{%
    \ignorespaces#1 #2.\hfill}}}#3}
\renewcommand{\tocsubsubsection}[3]{%
  \indentlabel{\@ifnotempty{#2}{\hspace*{4.6em}\makebox[3em][l]{%
    \ignorespaces#1 #2.\hfill}}}#3}
\title{Differential forms and quadrics of the canonical image}
\author{Luca Rizzi}
\address{D.I.M.I. \\
the University of Udine\\
Udine, 33100 Italy\\
\texttt{rizzi.luca@spes.uniud.it}}
\author{Francesco Zucconi}
\address{D.I.M.I. \\
the University of Udine\\
Udine, 33100 Italy\\
\texttt{Francesco.Zucconi@dimi.uniud.it}}
\begin{document}

\markboth{Rizzi and Zucconi}{Differential forms and quadrics of the canonical image}

\begin{abstract} We extend some of the results of \cite{PZ} and we 
    prove a criterion for a family $\pi\colon\sX\to B$ 
    of $n$-dimensional varieties of general type and with Albanese 
    morphism of degree $1$ to have birational fibers. 
    We do not assume that the canonical map of the general fiber is a 
    morphism nor that the canonical linear system has 
    no fixed components. We also prove some instances 
 of the generic Torelli theorem under the 
    assumption, natural in this context, that the fibers are minimal and their minimal model
    is unique. It 
    is trivial to construct counterexamples to generic Torelli 
    without such minimality assumptions. 
     \end{abstract}

\maketitle
\tableofcontents

\section{Introduction}
 Given an $n\times n$ matrix $T\in {\rm{Mat}}(n,\mathbb K)$, there 
 always exists its adjoint matrix, 
 $T^{\vee}$ such that by row-column product we obtain $T\cdot 
 T^{\vee}={\rm{det}}(T)I_{n}$, where $I_{n}\in {\rm{Mat}}(n,\mathbb 
 K)$ is the identity matrix. In this paper we consider the above construction in 
 the context of locally free sheaves.
 
 \subsection{General theory}
 Let $\xi\in 
 \text{Ext}^1(\mathcal{F},\mathcal{O}_X)$ be an extension class 
 associated to the following exact sequence of locally free sheaves 
 over an $m$-dimensional smooth variety $X$:
\begin{equation}\label{la sequenzasolita}
0\to\mathcal{O}_X\stackrel{}{\rightarrow} \mathcal{E}\stackrel{}{\rightarrow} \mathcal{F}\to 0.
\end{equation}
Assume that $\sF$ is of rank $n$ and that the kernel of the 
connecting homomorphism  $\partial_{\xi}\colon 
H^{0}(X,\sF)\to H^{1}(X,\sO_{X})$ has dimension $\geq n+1$. Take an
$n+1$-dimensional subspace $W\subset \Ker\partial_{\xi}$ and a basis $\sB=\{\eta_{1},\ldots,\eta_{n+1}\}$. By choosing a lifting 
$s_{i}\in H^{0}(X,\sE)$ of $\eta_{i}$, where $i=1,\ldots, 
n+1$, we have a top form $\Omega\in H^{0}(X,\det\sE)$ from the 
element $s_{1}\wedge\ldots\wedge s_{n+1}\in 
\bigwedge^{n+1}H^{0}(X,\sE)$. Since $\det\sE=\det\sF$ we 
actually obtain from $\Omega$ a top form $\omega$ of $\sF$, which depends on the chosen 
liftings and on $\sB$. We call such an $\omega\in 
H^{0}(X,\det\sF)$ an \emph{adjoint form} of $\xi, W, \sB$. Indeed consider the $n+1$ top forms 
$\omega_{1},\ldots ,\omega_{n+1}\in H^{0}(X,\det\sF)$ where 
$\omega_{i}$ is obtained by the element 
$\eta_{1}\wedge\ldots\wedge{\widehat{\eta_{i}}}\wedge\ldots\wedge 
\eta_{n+1}\in \bigwedge^{n}H^{0}(X,\sF)$. It is easily seen 
that the subscheme of $X$ where $\omega$ vanishes is locally given by 
the vanishing of the determinant of a suitable $(n+1)\times (n+1)$ 
matrix $T$ and the local expressions of $\omega_{i}$, $i=1,\ldots, 
n+1$, give some entries of $T^{\vee}$. This idea to relate 
extension classes to adjoint forms was first introduced in \cite{CP} 
for the case of smooth curves and later the theory was extended
to any smooth algebraic 
variety; see: \cite{PZ}. 
Since then it has been fruitfully applied in \cite{Ra}, \cite{PR}, \cite{CNP}, \cite{victor1}, \cite{victor2} and \cite{BGN}.

In this paper we go deeper along the direction indicated in \cite{PZ}.

To the $n+1$-dimensional subspace $W\subset \Ker\partial_{\xi}$ 
taken above, we associate $D_{W}$ and $Z_{W}$ which are 
respectively the fixed part and 
the base loci of the sublinear system given by $\lambda^{n} W \subset H^{0}(X,\det \sF)$, where $\lambda^{n}\colon 
\bigwedge^{n} H^{0}( X,\sF)\to H^{0}(X,\det\sF)$ is the natural 
homomorphism obtained by the wedge product. We prove:

\begin{thml}
Let $X$ be an $m$-dimensional compact complex smooth 
variety. Let $\sF$ be a rank $n$ locally free sheaf and let $\xi\in 
{\rm{Ext}}^1(\mathcal{F},\mathcal{O}_X)$ be the class of 
the sequence (\ref{la sequenzasolita}). Let $\omega$ be an adjoint form 
associated to a subspace $W\subset   H^{0}(X,\det\sF)$. If $\omega\in\lambda^nW$ then 
$\xi\in\Ker({\rm{Ext}}^1(\mathcal{F},\mathcal{O}_X)\to 
{\rm{Ext}}^1(\mathcal{F}(-D_{W}),\mathcal{O}_X))$. Viceversa. Assume $\xi\in\Ker({\rm{Ext}}^1(\mathcal{F},\mathcal{O}_X)\to 
{\rm{Ext}}^1(\mathcal{F}(-D_{W}),\mathcal{O}_X))$. If 
$h^{0}(X,\sO_{X}(D_{W}))=1$, then 
$\omega\in\lambda^nW$. 
\end{thml}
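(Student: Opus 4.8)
The plan is to reduce both implications to a single local identity coming from the adjugate picture of the Introduction, read off through a Čech cocycle for $\xi$. Fix an open cover $\{U_\alpha\}$ on which (\ref{la sequenzasolita}) splits, and let $r_\alpha\colon\sE\to\sO_X$ be the associated retractions; then $\rho_{\alpha\beta}:=r_\alpha-r_\beta$ is a cocycle in $\sF^\vee(U_{\alpha\beta})$ representing $\xi$ under $\mathrm{Ext}^1(\sF,\sO_X)=H^1(X,\sF^\vee)$. Since each $\eta_i\in\Ker\partial_\xi$ lifts to a global $s_i\in H^0(X,\sE)$, the functions $f_i^\alpha:=r_\alpha(s_i)\in\sO_X(U_\alpha)$ satisfy $f_i^\alpha-f_i^\beta=\rho_{\alpha\beta}(\eta_i)$. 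Writing the $s_i$ as the rows of the matrix $T$ of the Introduction, the cofactor expansion of $T\,T^\vee=\det(T)I$ gives the two identities I will use throughout: the local relation $\omega=\sum_i\epsilon_i f_i^\alpha\,\omega_i$ (signs $\epsilon_i=\pm1$, from the first column) and the global relation $\sum_i\epsilon_i\,\omega_i\,\eta_i=0$ in $H^0(X,\det\sF\otimes\sF)$ (from the remaining columns). Finally I record that, under $\mathrm{Ext}^1(\sF,\sO_X)=H^1(X,\sF^\vee)$ and $\mathrm{Ext}^1(\sF(-D_W),\sO_X)=H^1(X,\sF^\vee(D_W))$, the hypothesis $\xi\in\Ker\big(\mathrm{Ext}^1(\sF,\sO_X)\to\mathrm{Ext}^1(\sF(-D_W),\sO_X)\big)$ says exactly that the cocycle $\{\rho_{\alpha\beta}\}$, pushed through the natural inclusion $j\colon\sF^\vee\hookrightarrow\sF^\vee(D_W)=\sHom(\sF,\sO_X(D_W))$, becomes a coboundary.

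For the converse (the direction that uses $h^0(X,\sO_X(D_W))=1$) I assume $j_*\xi=0$, so there are $\tau_\alpha\in\sHom(\sF,\sO_X(D_W))(U_\alpha)$ with $\rho_{\alpha\beta}=\tau_\alpha-\tau_\beta$. Comparing with $f_i^\alpha-f_i^\beta=\rho_{\alpha\beta}(\eta_i)$ shows that $g_i:=f_i^\alpha-\tau_\alpha(\eta_i)$ is independent of $\alpha$, hence a global section of $\sO_X(D_W)$. Here the hypothesis enters: $h^0(X,\sO_X(D_W))=1$ forces $g_i=c_i\sigma$ for constants $c_i\in\mC$, where $\sigma$ is the canonical section with $\mathrm{div}(\sigma)=D_W$, so that $f_i^\alpha=c_i\sigma+\tau_\alpha(\eta_i)$. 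Substituting into $\omega=\sum_i\epsilon_i f_i^\alpha\omega_i$ and applying $\tau_\alpha$ to the relation $\sum_i\epsilon_i\omega_i\eta_i=0$ to annihilate the $\tau_\alpha$-contribution yields $\omega=\sum_i\epsilon_i c_i\,\omega_i\in\lambda^nW$. The only bookkeeping is that, because the natural inclusion $\sO_X\hookrightarrow\sO_X(D_W)$ is multiplication by $\sigma$, this computation takes place among rational sections of $\det\sF$ (where $\sigma\equiv1$) and descends to $\det\sF$ since $\det\sF\hookrightarrow\det\sF(D_W)$ is injective.

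For the direct implication I run the same computation backwards, the work being to \emph{produce} the cochain $\{\tau_\alpha\}$. If $\omega\in\lambda^nW$, write $\omega=\sum_i\epsilon_i c_i\omega_i$ and subtract the local relation to get $\sum_i\epsilon_i(f_i^\alpha-c_i)\omega_i=0$, i.e. $(f_i^\alpha-c_i)_i$ lies in the kernel sheaf $\sK:=\Ker\big(\sO_X^{\,n+1}\to\det\sF,\ e_i\mapsto\epsilon_i\omega_i\big)$. I then invoke the \emph{adjugate map} $\mu\colon\sK\to\sF^\vee(D_W)$, characterised by $\mu\circ\eta^\vee=j$, where $\eta^\vee\colon\sF^\vee\to\sK$ sends $\phi\mapsto(\phi(\eta_i))_i$ (this lands in $\sK$ precisely because $\sum_i\epsilon_i\omega_i\eta_i=0$); concretely, off the base locus $Z_W$ one inverts the $\eta_i$ by Cramer's rule, choosing at each point an index $i_0$ with $\omega_{i_0}\neq0$ so that $\{\eta_i\}_{i\neq i_0}$ is a local frame of $\sF$ whose dual frame has poles bounded by $D_W$. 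Setting $\tau_\alpha:=\mu\big((f_i^\alpha-c_i)_i\big)$ gives $\tau_\alpha-\tau_\beta=\mu\big((\rho_{\alpha\beta}(\eta_i))_i\big)=\mu\big(\eta^\vee(\rho_{\alpha\beta})\big)=j(\rho_{\alpha\beta})=\rho_{\alpha\beta}$ in $\sF^\vee(D_W)$, so $\{\rho_{\alpha\beta}\}$ is a coboundary and $\xi$ lies in the required kernel.

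The main obstacle is the construction and well-definedness of $\mu$ (equivalently, of the functionals $\tau_\alpha$ taking prescribed values on the $\eta_i$). The assignment respects the primary syzygy $\sum_i\epsilon_i\omega_i\eta_i=0$ exactly because $(f_i^\alpha-c_i)_i\in\sK$; but along $D_W$ the $\eta_i$ acquire a further syzygy, and it is this that forces the target to be the twisted sheaf $\sF^\vee(D_W)$ rather than $\sF^\vee$ — precisely the appearance of the adjoint matrix $T^\vee$ and of the divisor $D_W$. I expect the delicate points to be checking that $\mu$ is single-valued (independent of the auxiliary $i_0$) and lands in $\sF^\vee(D_W)$ with the correct pole order, and that $\mu$, a priori defined only off the codimension $\ge 2$ locus $Z_W$, extends across $Z_W$; the latter holds because $\sF^\vee(D_W)$ is locally free and $\sK$ is reflexive, so that $\sHom(\sK,\sF^\vee(D_W))$ admits no codimension-one obstruction to extension. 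Everything else is the bookkeeping indicated above.
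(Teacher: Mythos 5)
Your argument is correct, and it reaches both implications by a genuinely different route from the paper's. For the direct implication the paper does not manipulate cocycles at all: it passes to the $n$-th wedge sequence $0\to\bigwedge^{n-1}\sF\to\sE^{(n)}\to\det\sF(-D_W)\to0$ of diagram (\ref{diagramma2}), lifts the sections $\Omega_i$ to $\sE^{(n)}$, and shows — via the same Laplace identities you use — that the image of the resulting evaluation map $\bigoplus_{i=1}^{n+1}\sO_X\to\sE^{(n)}$ is a torsion-free rank-one sheaf isomorphic to $\det\sF(-D_W)\otimes\sI_{Z_W}$, whose reflexive hull $\det\sF(-D_W)\hookrightarrow\sE^{(n)}$ splits the sequence. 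Your adjugate map $\mu\colon\sK\to\sF^\vee(D_W)$ is the same mechanism packaged as an explicit $0$-cochain bounding the cocycle of $\xi$ in the untwisted rank-$(n+1)$ sequence, and the two delicate points you flag do hold: at any point off $Z_W$ some $\tilde{\omega}_{i_0}$ is a unit, so Cramer's rule divides by $\omega_{i_0}=d\cdot\tilde{\omega}_{i_0}$ and produces poles bounded exactly by $D_W$; independence of $i_0$ follows because the two candidates agree off $D_W$, hence as meromorphic sections; and the extension across the codimension $\ge 2$ locus $Z_W$ only needs the target $\sF^\vee(D_W)$ to be normal, which it is, being locally free (reflexivity of $\sK$ is not actually required). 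For the converse the divergence is larger: the paper first constructs a section $\Omega'\in H^0(X,\bigwedge^n\sE(-C))$ lifting $d$ through a coboundary computation, then dualizes the Koszul-type diagram (\ref{analogo}) and invokes $h^0(X,\sO_X(D_W))=1$ only at the last step, whereas your observation that $f_i^\alpha-\tau_\alpha(\eta_i)$ glues to a global section of $\sO_X(D_W)$, hence equals $c_i\sigma$, is considerably shorter and yields the same conclusion. What the paper's formulation buys is the intermediate objects themselves — the splitting of the $\sE^{(n)}$-sequence, the identification of $\Ima\tilde{\mu}$ with $\det\sF(-D_W)\otimes\sI_{Z_W}$, and the section $\Omega'$ — which are reused in the proof of Theorem \ref{torelloo1}; your version buys brevity and makes visible exactly where the multiplicity of $D_W$ enters, namely as the pole order of the dual frame. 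Two small points of hygiene: both arguments need $\lambda^nW\neq 0$ for $D_W$ to be defined, and you should state explicitly that $\mu$ is $\sO_X$-linear so that the step $\tau_\alpha-\tau_\beta=\mu\bigl(\eta^\vee(\rho_{\alpha\beta})\bigr)$ is legitimate; neither is an obstacle.
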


See Theorem \ref{teoremaaggiunta} and Theorem \ref{viceversa}. 
The first part of the above theorem, but under the assumption $m=n$, is 
in \cite[Theorem 1.5.1]{PZ}.

By abuse of notation we denote by $|\det\sF|$ the linear system 
$\mP(H^{0}(X,\det\sF))$ and we denote by $D_{\det\sF}$ the 
fixed part of $|\det\sF|$. We set 
$|\det\sF|=D_{\det\sF}+| M_{\det\sF}|$. We prove a 
relation between liftability of adjoint forms and quadrics vanishing 
on the image of the map $\phi_{| M_{\det\sF}|}\colon X\dashrightarrow 
\mP(H^{0}(X,\det\sF)^{\vee})$ associated to 
$|\det\sF|$. We recall that for a
top form $\omega\in H^{0}(X,\det\sF)$ of $\sF$ to be liftable 
to an $n$-form of $\sE$ is equivalent to have that $\omega$ 
belongs to the kernel of the connecting homomorphism
\begin{equation*}
\partial_{\xi}^{n}\colon H^{0}(X, \det\sF)\to 
H^{1}(X,\bigwedge^{n-1}\sF)
\end{equation*} of the short exact sequence obtained by 
the wedge of the sequence (\ref{la sequenzasolita}); see: subsection \ref{sezione1}.

Denote by  $\lambda^{n}H^0(X,\sF)$ the image of 
$\bigwedge^{n}H^{0}(X,\sF)$ inside $H^{0}(X,\det\sF)$ and consider the linear system
$\mP(\lambda^{n}H^0(X,\sF))$; denote by $D_\sF$ its fixed 
component. Note that $D_{\rm{det}\sF}<D_{\sF}$, but in general 
$D_{\rm{det}\sF}\neq D_{\sF}$. We say that $\xi$ is a {\it{deformation supported on 
$D_{\sF}$}} if it 
belongs to the kernel of $\text{Ext}^1(\mathcal{F},\mathcal{O}_X)\to 
\text{Ext}^1(\mathcal{F}(-D_{\sF}),\mathcal{O}_X)$. 

\begin{defn}

    An \emph{adjoint quadric} for $\omega$ is a quadric in $\mP(H^0(X,\det\sF)^{\vee})$ of the form
    $$\omega^2=\sum L_i\cdot\omega_i,$$
    where $\omega$ is a $\xi$-adjoint of $W\subset  H^0(X,\sF)$, the 
    $\omega_i$'s are as above and $L_{i}\in H^0(X,\det\sF)$, 
    $i=1,\ldots, n+1.$
\end{defn}

 We point out that adjoint quadrics have rank less then or equal to 
 $2n+3$. We prove:

\begin{thml}    Let $X$ be an $m$-dimensional smooth variety. 
    Let $\sF$ be a %generically globally generated
		locally free sheaf of rank 
    $n$ such that $h^{0}(X, \sF)\geq n+1$ and let
    $\xi\in {\rm{Ext}}^1(\mathcal{F},\mathcal{O}_X)$. Assume that
    $\phi_{|M_{\det\sF} |}\colon X\dashrightarrow 
    \mP(H^{0}(X,\det\sF)^{\vee})$ is a non trivial rational map.
    If $\xi$ satisfies $\partial_\xi=0$ and $\partial_{\xi}^{n}=0$, 
    then $\xi$ is supported on $D_{\sF}$, provided that 
    there are no $\omega$-adjoint quadrics containing the schematic image $\phi_{|M_{\det\sF} 
    |}(X)$, where $\omega$ is a generic adjoint form. In particular 
    the claim holds if there are no quadrics of 
    rank less then or equal to $2n+3$ through the image.
\end{thml}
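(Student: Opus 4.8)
The plan is to deduce the conclusion from the forward implication of Theorem [A]. Since $\partial_{\xi}=0$ we have $\Ker\partial_{\xi}=H^{0}(X,\sF)$, so every $(n+1)$-dimensional subspace is admissible for the adjoint construction. First I would fix a generic $(n+1)$-dimensional $W\subset H^{0}(X,\sF)$ together with a generic basis and generic liftings $s_{1},\dots,s_{n+1}$, and use genericity to arrange $D_{W}=D_{\sF}$: since $\lambda^{n}W\subset\lambda^{n}H^{0}(X,\sF)$ one always has $D_{W}\geq D_{\sF}$, while for a general choice the common fixed divisor of the $\omega_{i}$ is exactly the fixed part $D_{\sF}$ of the whole system. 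Granting $\omega\in\lambda^{n}W$, the first part of Theorem [A] then gives $\xi\in\Ker(\mathrm{Ext}^{1}(\sF,\sO_{X})\to\mathrm{Ext}^{1}(\sF(-D_{W}),\sO_{X}))=\Ker(\mathrm{Ext}^{1}(\sF,\sO_{X})\to\mathrm{Ext}^{1}(\sF(-D_{\sF}),\sO_{X}))$, which is precisely the assertion that $\xi$ is supported on $D_{\sF}$. Thus everything reduces to proving $\omega\in\lambda^{n}W$ for a generic adjoint form.

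The heart of the matter is that liftability forces the existence of an adjoint quadric through the image. Using $\partial_{\xi}^{n}=0$ I would lift $\omega$ to a section $\widetilde{\omega}\in H^{0}(X,\bigwedge^{n}\sE)$ via the wedged sequence $0\to\bigwedge^{n-1}\sF\to\bigwedge^{n}\sE\to\det\sF\to 0$, and then pair it with the liftings through $\bigwedge^{n}\sE\otimes\sE\to\bigwedge^{n+1}\sE=\det\sE=\det\sF$, setting $L_{i}:=\widetilde{\omega}\wedge s_{i}\in H^{0}(X,\det\sF)$; these are genuine global sections. Next I would run the local description recalled in the introduction: in a local frame $f_{0},f_{1},\dots,f_{n}$ of $\sE$ with $f_{0}=e_{0}$ the image of $1\in H^{0}(\sO_{X})$, one has $\omega=\det T$ and the $\omega_{i}$ are, up to sign, the cofactors $(T^{\vee})_{0i}$, so Laplace expansion along the column of $f_{0}$ reads $\omega=\sum_{i}\pm\,a_{i0}\,\omega_{i}$ with $a_{i0}$ the local entries of $T$. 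Multiplying by $\omega$ gives $\omega^{2}=\sum_{i}\pm(a_{i0}\omega)\,\omega_{i}$ locally, and a direct computation identifies the local coefficients $a_{i0}\omega$ with the global $L_{i}$ up to correction terms of the form $e_{0}\wedge\beta\wedge s_{i}$; these corrections assemble into syzygies among the $\omega_{i}$ and cancel in the sum. The upshot is a global identity $\omega^{2}=\sum_{i}\widetilde{L}_{i}\,\omega_{i}$ in $H^{0}(X,(\det\sF)^{\otimes 2})$ with $\widetilde{L}_{i}\in H^{0}(X,\det\sF)$; equivalently $Q:=\omega^{2}-\sum_{i}\widetilde{L}_{i}\omega_{i}$, read in $\mathrm{Sym}^{2}H^{0}(X,\det\sF)$, is an adjoint quadric for $\omega$ whose image under the multiplication map vanishes, hence a quadric containing $\phi_{|M_{\det\sF}|}(X)$ (after dividing out the square of the fixed section cutting $D_{\det\sF}$). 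I expect this globalization step --- promoting the local cofactors to global sections and checking that the residual syzygies cancel --- to be the main obstacle.

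Now I would argue by contradiction. Suppose $\omega\notin\lambda^{n}W=\langle\omega_{1},\dots,\omega_{n+1}\rangle$. A short linear algebra lemma says that, for a subspace $U\subset V$ and $v\in V$, the square $v\cdot v$ lies in the image of $V\otimes U\to\mathrm{Sym}^{2}V$ if and only if $v\in U$; applied with $V=H^{0}(X,\det\sF)$, $U=\langle\omega_{i}\rangle$ and $v=\omega$, it gives that $Q$ is a \emph{nonzero} element of $\mathrm{Sym}^{2}H^{0}(X,\det\sF)$. Thus $Q$ is a nonzero adjoint quadric containing the schematic image, contradicting the hypothesis. Therefore $\omega\in\lambda^{n}W$, and the reduction of the first paragraph finishes the proof.

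Finally, for the last sentence I would record that an adjoint quadric $\omega^{2}=\sum_{i}L_{i}\omega_{i}$ has rank at most $2n+3$: the summand $\omega^{2}$ contributes rank $1$ and each of the $n+1$ products $L_{i}\omega_{i}$ contributes rank at most $2$. Hence the absence of quadrics of rank at most $2n+3$ through $\phi_{|M_{\det\sF}|}(X)$ is stronger than the absence of adjoint quadrics, and the ``in particular'' clause follows at once.
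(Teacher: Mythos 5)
Your proposal is correct and follows the same overall skeleton as the paper's argument (reduce to $[\omega]=0$ via the Adjoint Theorem together with the genericity identity $D_W=D_{\sF}$ from \cite[Proposition 3.1.6]{PZ}, then contradict the absence of adjoint quadrics), but the central step --- producing global coefficients $L_i$ with $\omega^2=\sum L_i\omega_i$ out of $\partial_\xi^n(\omega)=0$ --- is done by a genuinely different route. The paper's Theorem \ref{torelloo1} argues cohomologically: it forms the evaluation sequence $0\to\sK\to\bigwedge^nW\otimes\sO_X\to\det\sF(-D_W)\otimes\sI_{Z_W}\to0$, maps the dual of the defining extension into it by contraction against the $s_i$, identifies the coboundary of the section $\sigma$ corresponding to $\omega$ with the pushed-forward class $\xi''$, and concludes that $\xi\cup\omega=0$ forces the lifting $\sigma\cdot\omega=\sum L_i^\sigma\tilde\omega_i$ --- which in fact yields the relation at the finer level of $\det\sF(-D_W)\otimes\sI_{Z_W}$. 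You instead take a lifting $\widetilde\omega\in H^0(X,\bigwedge^n\sE)$, set $L_i:=\widetilde\omega\wedge s_i$, and verify the identity by Laplace expansion in a local frame; this is more elementary and makes the provenance of the $L_i$ explicit. The step you flag as the main obstacle does go through: writing $\widetilde\omega=\omega\,\sigma_1\wedge\cdots\wedge\sigma_n+\gamma\wedge d\epsilon$ locally, the correction to $L_i$ is $\pm(\bar\gamma\wedge\eta_i)$ with $\bar\gamma\in\bigwedge^{n-1}\sF$, and $\sum_i(-1)^{i-1}(\bar\gamma\wedge\eta_i)\cdot\omega_i=0$ because $\sum_i(-1)^{i-1}\eta_i\otimes\omega_i\in\sF\otimes\det\sF$ is the comultiplication image of $\eta_1\wedge\cdots\wedge\eta_{n+1}\in\bigwedge^{n+1}\sF=0$; so the residual terms cancel exactly as you predicted. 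Your linear-algebra lemma (via the quotient $\mathrm{Sym}^2V\to\mathrm{Sym}^2(V/U)$) is a welcome refinement: the paper passes from relation (\ref{quadra}) to ``an adjoint quadric exists'' without explicitly checking that the form $\omega^2-\sum L_i\omega_i$ is nonzero in $\mathrm{Sym}^2H^0(X,\det\sF)$ when $\omega\notin\lambda^nW$, and your lemma supplies precisely that. The rank bound $2n+3$ and the ``in particular'' clause are handled identically in both treatments.
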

    
    See Corollary \ref{torelloo}. Note that if $D_{\sF}=0$ then 
    Theorem B gives a criterion for the vanishing of $\xi$. Theorem 
    B applies 
    directly if $Y:=\phi_{|M_{\det\sF} |}(X)$ is 
    a hypersurface of $\mP(H^{0}(X,\det\sF)^{\vee})$ of degree 
    $>2$; see Corollary \ref{suifibratiuno} for a more general claim.

Theorem A and Theorem B give a general criterion for a 
family $\pi\colon \sX\to B$ to have birational fibers, if the condition on 
the nature of adjoint quadrics given in 
Theorem B is satisfied; see: \ref{teoremalocaletorelli}. 
In this paper a {\it{family of relative dimension $n$}} 
is a smooth proper surjective morphism of smooth varieties 
$\pi\colon\sX\to B$ where any fiber $X_{y}:=\pi^{-1}(y)$ is a manifold of 
complex dimension $n$ and $B$ is a connected analytic variety. We say that $\pi\colon \sX {\rightarrow}B$ satisfies
    {\it{extremal liftability conditions over $B$}} if 
\begin{itemize}
\item[(i)] $ H^0({\sX},\Omega_{{\sX}}^1)
\twoheadrightarrow H^0(X_{b},\Omega_{X_{b}}^1)$;
\item[(ii)] $H^0({\sX},\Omega_{{\sX}}^n)
\twoheadrightarrow H^0(X_{b},\Omega_{X_{b}}^n)$
\end{itemize} where the notation \lq\lq$\twoheadrightarrow$\rq\rq means that the maps are surjective.
\noindent 
%Clearly if 
%$H^0(X_{b},\Omega_{X_{b}}^1)=H^0(X_{b},\Omega_{X_{b}}^n)=0$ the above 
%conditions are trivially satisfied. Instead
The families $\pi\colon\sX\to B$ which satisfy extremal liftability 
conditions and such that 
$H^0(X_{b},\Omega_{X_{b}}^1)\neq 0$, that is with irregular fibers, 
have the advantage that all the fibers have the same Albanese variety.
Now by a result called {\it {the Volumetric 
theorem}}, see: \cite[Theorem 1.5.3]{PZ}, cf. Theorem 
\ref{volteoremavolume}, we obtain the main application of 
our general theory that we present here:

\begin{thml}
    Let $\pi\colon\sX\to B$ be a family of $n$-dimensional irregular 
    varieties which satisfies extremal liftability conditions. Assume 
    that for every fiber $X$ the Albanese map of $X$ has degree $1$ 
    and that there are no adjoint quadrics containing the canonical image of $X$. Then 
    the fibers of $\pi\colon\sX\rightarrow B$ are birational. In particular 
    the claim holds if there are no quadrics of 
    rank less then or equal to $2n+3$ passing through the canonical image 
    of $X$.
\end{thml}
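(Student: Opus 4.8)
The plan is to realise the infinitesimal data of the family as an instance of the general set-up with $\sF=\Omega^1_X$, and then feed the outcome into Theorem B and the Volumetric theorem. Fix a fibre $X=X_b$ and a tangent vector $\partial\in T_bB$. Restricting the relative cotangent sequence $0\to\pi^*\Omega^1_B\to\Omega^1_{\sX}\to\Omega^1_{\sX/B}\to0$ to $X$ and pushing out the trivial conormal factor $\pi^*\Omega^1_B|_X\cong T^*_bB\otimes\sO_X$ along the functional $\partial\colon T^*_bB\to\mC$ produces a locally free extension
\[
0\to\sO_X\to\sE_\partial\to\Omega^1_X\to0
\]
whose class $\xi_\partial\in\mathrm{Ext}^1(\Omega^1_X,\sO_X)$ is the Kodaira--Spencer class of $\pi$ in the direction $\partial$. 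With $\sF=\Omega^1_X$ we have $\det\sF=\omega_X$, so $H^0(X,\det\sF)=H^0(X,\omega_X)$ and $\phi_{|M_{\det\sF}|}$ is exactly the canonical map of $X$ with its fixed part stripped off; its image is the ``canonical image'' of the statement, and the non-triviality of this rational map required by Theorem B is implicit in the hypothesis that the image is not contained in any quadric. Since $X$ is irregular with degree-$1$ Albanese map, $q=h^0(X,\Omega^1_X)\ge n$; we may assume $q\ge n+1$, the case $q=n$ being trivial because then $X$ is birational to the common Albanese variety $A$ and all fibres are birational at once. Thus the hypothesis $h^0(X,\sF)\ge n+1$ of Theorem B holds and there is room to choose the $(n+1)$-dimensional subspaces $W\subset H^0(X,\Omega^1_X)$ of the general theory.

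The first technical point is to convert the extremal liftability conditions into the vanishing of the two connecting maps attached to $\xi_\partial$. Condition (i) says every $\eta\in H^0(X,\Omega^1_X)$ is the restriction of a global form on $\sX$; such a form restricts to a section of $\Omega^1_{\sX}|_X$ and hence, after projecting onto the quotient $\sE_\partial$, lifts $\eta$ to $H^0(X,\sE_\partial)$. Since a section of $\Omega^1_X$ lifts to $\sE_\partial$ precisely when it is killed by $\partial_{\xi_\partial}\colon H^0(X,\Omega^1_X)\to H^1(X,\sO_X)$, condition (i) forces $\partial_{\xi_\partial}=0$. Running the same argument through the $n$-th wedge $0\to\Omega^{n-1}_X\to\bigwedge^n\sE_\partial\to\omega_X\to0$, whose connecting map is $\partial^n_{\xi_\partial}\colon H^0(X,\omega_X)\to H^1(X,\Omega^{n-1}_X)$ and which is a quotient of $\Omega^n_{\sX}|_X$, condition (ii) (global $n$-forms restrict surjectively to $H^0(X,\omega_X)$) forces $\partial^n_{\xi_\partial}=0$. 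Thus for every $\partial$ the class $\xi_\partial$ satisfies exactly the two vanishing hypotheses of Theorem B.

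With these vanishings in hand I would apply Theorem B directly: since by assumption no adjoint quadric contains the schematic canonical image $\phi_{|M_{\det\sF}|}(X)$, Theorem B yields that $\xi_\partial$ is a deformation supported on $D_{\sF}$, that is $\xi_\partial\in\Ker\bigl(\mathrm{Ext}^1(\Omega^1_X,\sO_X)\to\mathrm{Ext}^1(\Omega^1_X(-D_{\sF}),\sO_X)\bigr)$. The final sentence of the statement is then immediate from the rank bound recorded above: adjoint quadrics have rank $\le 2n+3$, so the absence of all quadrics of rank $\le 2n+3$ through the canonical image is a stronger sufficient condition for the absence of adjoint quadrics, and the same conclusion follows.

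The last step, which I expect to be the main obstacle, is to pass from this pointwise, direction-by-direction infinitesimal statement to the global conclusion that the fibres are birational; this is precisely what the Volumetric theorem is designed to supply. Here the degree-$1$ Albanese hypothesis is essential: because the family satisfies extremal liftability and has irregular fibres, all fibres share one Albanese variety $A$, and degree $1$ means each $X_b$ is birational to its Albanese image $Y_b\subset A$. The content of the Volumetric theorem is that the support condition ``$\xi_\partial$ supported on $D_{\sF}$'', holding for all $b$ and all $\partial$, prevents the birational class of the Albanese image from varying, so the cycles $Y_b$ trace out a fixed birational model as $b$ moves; integrating this infinitesimal rigidity over the connected base $B$ then gives that all $Y_b$, and hence all $X_b$, are birational. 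The crux is therefore the bookkeeping internal to the Volumetric theorem that turns ``supported on $D_{\sF}$'' into the invariance of the Albanese image, the earlier steps being essentially formal once the identification $\sF=\Omega^1_X$ has been made.
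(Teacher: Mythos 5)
Your overall architecture matches the paper's: realise the Kodaira--Spencer class $\xi_b$ as an extension of $\Omega^1_{X_b}$ by $\sO_{X_b}$, translate the extremal liftability conditions into $\partial_{\xi_b}=0$ and $\partial^n_{\xi_b}=0$, invoke the no-adjoint-quadrics hypothesis, and finish with the Volumetric theorem. The first two steps are correct and are exactly what the paper does. The gap is in the hand-off to the Volumetric theorem. You feed it the statement ``$\xi_\partial$ is supported on $D_{\sF}$'' and describe the Volumetric theorem as converting that support condition into invariance of the Albanese image. That is not what Theorem \ref{volteoremavolume} says: its hypothesis is that the adjoint form satisfies $\omega_{\xi_b,W_b,\sB_b}\in\lambda^n W_b$ for every $b$, i.e.\ that the adjoint image $[\omega_b]$ vanishes. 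The implication ``$\xi$ supported on $D_W$ implies $[\omega]=0$'' is \emph{not} automatic; it is the content of Theorem \ref{viceversa} and requires the extra hypothesis $h^0(X,\sO_X(D_W))=1$, which you have not verified and which need not hold. So as written your final step does not go through.

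The repair is small but changes which result you should quote: instead of Corollary \ref{torelloo} (Theorem B), use Theorem \ref{torelloo1} itself, whose conclusion under the same hypotheses ($\partial_\xi=0$, $\partial^n_\xi(\omega)=0$, no $\omega$-adjoint quadrics through the canonical image) is precisely $[\omega]=0$; the ``supported on $D_{\sF}$'' statement of Theorem B is a corollary of this via the Adjoint Theorem and is the weaker of the two. With $[\omega_b]=0$ in hand for every $b$, the Volumetric theorem applies as stated. Two further points the paper attends to and you skip: (a) before choosing $W$ one must reduce (base change to a disk with a section, Proposition \ref{insalata}, Corollary \ref{musica}) to the situation where $\sA\cong A\times B$ for a fixed abelian variety $A$, so that a single generic $W\subset H^0(A,\Omega^1_A)$ pulls back coherently to every fiber, as the Volumetric theorem requires; (b) one must know $\lambda^n W_b\neq 0$ (the paper uses $q>n+1$ and \cite[Theorem 1.3.3]{PZ} to get $\dim\lambda^n W_b=n+1$) before adjoint forms and adjoint quadrics even make sense.
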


See: Theorem \ref{teoremalocaletorelli} for a slightly more general 
statement.

\subsection{Torelli-type problems}
As a partial motivation to 
understand the above general theory and in particular Theorem C, we point 
out the reader that if $\sF$ is the cotangent sheaf $\Omega^{1}_{X}$ 
of $X$, it gives, in some cases,  an 
answer to the generic Torelli problem and to the infinitesimal 
Torelli problem and, more importantly, that {\it{there are 
counterexamples}} to both generic and infinitesimal Torelli theorem if 
the above condition on the quadrics through the canonical image is 
removed. The {\it{global Torelli problem}} asks whether two compact K\"{a}hler 
manifolds  of dimension $n$, $X_{1}$, $X_{2}$, with an isomorphism $\psi:H^{n}(X_{1}, 
\mathbb Z)\rightarrow H^{n}(X_{2}, \mathbb Z)$ preserving the cup 
product pairing and with the same Hodge decomposition are biholomorphic. 
As references for the Torelli problems we quote the following books:
\cite{Vo1}, \cite{Vo2}, \cite{CMP}, see also \cite{Re2}. Here we 
need to recall only very few basic notions. 
Let $\pi\colon\sX\to B$ be a family. Fix a point $0\in B$. We can 
    interpret $\pi\colon\sX\to B$ as a deformation of complex 
    structures on $X:=X_{0}$. Indeed by Ehresmann's theorem, after 
    possibly shrinking $B$,  we have an isomorphism between 
$H^{k}(X_{y},\mathbb C)$ and $V:= H^{k}(X,\mathbb C) $, for every
$y\in B$. Furthermore, $\dim F^{p}H^{k}(X_{y},
\mathbb C)={\rm{dim}}_{\mathbb C}F^{p}H^{k}(X,
\mathbb C)=  b^{p,k}$, where 
$F^{p}H^{k}(X_{y}, \mathbb C)=\bigoplus_{ r\geq p}H^{r,k-r}(X_y)$.
The {\it{period map}} 
$$\sP^{p, k}\colon B\rightarrow \mathbb G={\rm{Grass}}(b^{p,k}, V)$$
(cf.~ \cite{gr-s}) is the map which to $y\in B$ associates the subspace
$F^{p}H^{k}(X_{y},\mathbb C)$ of $V$. 
In \cite{grif1}, \cite{grif2}, P. Griffiths 
proved that $\sP^{p, k}$ is holomorphic and that  the image of the differential
$d\sP^{p, k}:T_{B,{0}}\rightarrow T_{G, F^{p}H^{k}(X,\mathbb C)}$ 
is actually contained in 
$${\rm{Hom}}(F^{p}H^{k}(X,\mathbb C), 
F^{p-1}H^{k}(X,\mathbb
C)/F^{p}H^{k}(X,\mathbb C)).$$
\noindent
Setting  $q=k-p$ and using the canonical
isomorphism
$$
F^{p}H^{k}(X,\mathbb C)/F^{p+1}H^{k}(X,\mathbb C)\simeq
H^{q}(X,\Omega^{p}_{X})
$$
he showed that $d\sP^{p, k}$ is
the composition of the Kodaira-Spencer map $ T_{B,0}\rightarrow
H^{1}(X,\Theta_{X})$ with the map given by the cup product:
$$
d\pi^{q}_{p}\colon H^{1}(X,\Theta_{X})\rightarrow  
\Hom (H^{q}(X, \Omega^{p}_{X}), H^{q+1}(X, \Omega^{p-1}_{X}))
$$
where $\Theta_{X}$ is the tangent sheaf of $X$. 
Following \cite {Do} we recall that the {\em local Torelli problem} 
asks under which hypotheses the 
period map of $\pi\colon\sX\to B$ is an immersion and the {\it{generic 
Torelli problem}} asks whether the period map is generically injective where
$B$ is a connected analytic variety; the 
{\em n-infinitesimal Torelli problem} asks if the differential of the period map
$\sP^{n,n}$
for the local Kuranishi family $\pi \colon  \sX \to 
B$ (\cite{K}, cf. \cite{K-M}) is injective. 
We say that the {\em $n$-infinitesimal 
Torelli theorem holds for $X$} if $d\pi^{0}_n$ 
is injective.

In the case of smooth curves the answer to the Torelli problems is 
well-known; see: \cite{to}, \cite{andr}, 
\cite{we}, \cite{OS}.

Concerning Torelli problems in higher 
dimensions here we can recall only that some positive answers were 
obtained:
\cite{green1}, \cite{green2}, \cite{green3}, \cite{flenner}, 
\cite{cox}, \cite{Re1}, \cite{Pe}
and, with a slightly different point of 
view, \cite{Do}. See also \cite{Re2}, \cite{Vo1}, \cite{Vo2}.

On the other hand, surfaces $X$ of general 
type with $H^0(X,\Omega_{X}^1)=H^0(X,\Omega_{X}^2)=0$ are known examples of failure for the 
injectivity of the period map; see: \cite{Ca1}). 

More recently, the
problem has been studied in \cite{BC} where the authors construct  families 
such that $d\pi^{0}_{2}$ is not injective and the canonical sheaf is quasi 
very ample, that is the canonical map is a birational morphism and a 
local embedding on the complement of a finite set. In \cite{GZ} it is 
shown that for any natural 
number $N\geq 5$ there exists a 
generically smooth irreducible $(N+9)$-dimensional component $\sM_{N}$ 
of a moduli space of algebraic surfaces
such that for a general element $[X]$ of $\sM_{N}$, the canonical 
sheaf is very ample and $d\pi^{0}_2$ has kernel of dimension at least $1$. Hence 
the problem about good hypotheses to obtain the infinitesimal Torelli 
theorem is still quite open. Note that the counterexamples studied in \cite{BC} and in \cite{GZ} 
are given by fibered surfaces.

On the other hand by \cite{OS}, see 
also: \cite{Cod}, it is easy to see that  the 
$n$-th symmetric product of a hyperelliptic curve gives a 
counterexample to the infinitesimal Torelli theorem for non fibered 
varieties with Albanese morphism of degree $1$ onto the image. Note 
that if $X$ is the $2$-nd symmetric product of a hyperelliptic curve of genus $3$, 
$D_{\Omega^{1}_{X}}$ exists, it is a 
rational $-2$ curve; see: \cite[Proposition 3.17, (ii)]{CCM} and, 
as our theory indicates, the
infinitesimal deformations which kill the holomorphic forms are supported on 
$D_{\Omega^{1}_{X}}$.

It is here important to notice that the $n$-th symmetric product of 
hyperelliptic curves have canonical image 
contained in certain quadrics of rank less then or equal to $2n+3$. 
Indeed Theorem B confirms that for the Torelli problem $2n+3$ is a critical threshold for the 
rank of quadrics containing the canonical image. Our theory gives 
some positive answers to the infinitesimal 
Torelli theorem. In fact we easily deduce by Theorem B that it 
holds if $\Omega^{1}_{X}$ is 
generated by global sections, if we also assume that $X$ has irregularity 
$\geq n+1$ and there are no 
quadrics of rank less then or equal to $2n+3$ containing the canonical image; 
see: Corollary \ref{infinites}. 
We point out the reader that  
\cite{Pe} and  \cite{LPW} contain the same claim but under 
quite different hypotheses.
    
By the above counterexamples the hypotheses of Theorem C concerning 
the rank of the quadrics through the canonical image appear to 
be crucial to show the generic Torelli theorem. In dimension 
$\geq 3$ non-uniqueness of the minimal model is another possible obstruction. 

Hence in Corollary \ref{finali} we obtain that the generic Torelli 
theorem holds for families which satisfy the hypotheses of Theorem 
C and such that the general fiber is an $n$-dimensional irregular 
    minimal variety with unique minimal model. Even in the case of 
    surfaces there are obvious counterexamples to the above claim if 
    we do not assume minimality. It is sufficient to 
    consider the family $\pi\colon\sX\to B$ where $B$ is a smooth 
    curve inside a smooth surface $S$ and $X_{b}$ is the surface 
    obtained by the blow-up of $S$ at the point $b\in B$. So the 
    hypotheses of our Corollary \ref{finali} appear to be quite
    optimal to have a proof of the generic Torelli theorem.

\subsection{Acknowledgment}  
This research is supported by MIUR funds, 
PRIN project {\it Geometria delle variet\`a algebriche} (2010), coordinator A. Verra.

The authors would like to thank Miguel \'Angel Barja 
for very useful conversations on this topic.

\section{The adjoint theory}

We recall and generalize some of the results of \cite{PZ}.

\subsection{The Adjoint Theorem}
\label{sezione1}
Let $X$ be a compact complex
smooth variety of dimension $m$ and let $\mathcal{F}$ be a locally free sheaf of rank $n$. 
Fix an extension class $\xi\in \text{Ext}^1(\mathcal{F},\mathcal{O}_X)$ associated to the exact sequence:
\begin{equation}\label{unouno}
0\to\mathcal{O}_X\stackrel{d\epsilon}{\rightarrow} \mathcal{E}\stackrel{\rho_1}{\rightarrow} \mathcal{F}\to 0.
\end{equation}
The Koszul resolution associated to the section $d\epsilon\in H^0(X,\sE)
$\begin{equation*}
0\to \mathcal{O}_{X_0}\stackrel{d\epsilon}{\rightarrow} \mathcal{E}\stackrel{\wedge d\epsilon}{\rightarrow}\bigwedge^2\mathcal{E}\stackrel{\wedge d\epsilon}{\rightarrow}\cdots\stackrel{\wedge d\epsilon}{\rightarrow}\det\mathcal{E}\stackrel{\wedge d\epsilon}{\rightarrow}0
\end{equation*} splits into $n+1$ short exact sequences,
\begin{equation}
\label{seqesatta}
0\to\mathcal{O}_X\stackrel{d\epsilon}{\rightarrow} \mathcal{E}\stackrel{\rho_1}{\rightarrow} \mathcal{F}\to 0, 
\end{equation}
\begin{equation}
0\to\mathcal{F}\stackrel{d\epsilon}{\rightarrow} \bigwedge^2\mathcal{E}\stackrel{\rho_2}{\rightarrow} \bigwedge^2\mathcal{F}\to 0, 
\end{equation}
\begin{equation*}
\cdots
\end{equation*}
\begin{equation}
0\to\bigwedge^{n-1}\mathcal{F}\stackrel{d\epsilon}{\rightarrow} \bigwedge^n\mathcal{E}\stackrel{\rho_n}{\rightarrow} \det\mathcal{F}\to 0, 
\end{equation}
\begin{equation}
\label{isom}
0\to\det\mathcal{F}\stackrel{d\epsilon}{\rightarrow} \det\mathcal{E}\to0\to 0,
\end{equation}
each corresponding to $\xi$ via the natural isomorphism
\begin{equation*}
\text{Ext}^1(\sF,\sO_X)\cong \text{Ext}^1(\bigwedge^i\sF,\bigwedge^{i-1}\sF)
\end{equation*} where $i=0,\dots,n$. If we still denote by $\xi$ the element corresponding to $\xi$ by the isomorphism $\text{Ext}^1(\sF,\sO_X)\cong \text{Ext}^1(\sO_X,\sF^\vee)\cong H^1(X,\sF^\vee)$, then the coboundary homomorphisms
\begin{equation*}
\partial_\xi^i\colon H^0(X,\bigwedge^i\mathcal{F})\to H^1(X,\bigwedge^{i-1}\mathcal{F})
\end{equation*} are computed by the cup product with $\xi$ followed by contraction.

Denote by $H^n_{d\epsilon}$ the isomorphism inverse of (\ref{isom}) and by $\Lambda^{n+1}$ the natural map
\begin{equation}
\label{lambdan+1}
\Lambda^{n+1}\colon \bigwedge^{n+1}H^0(X,\mathcal{E})\to H^0(X,\det\mathcal{E}).
\end{equation} The composition of these homomorphisms defines
\begin{equation}
\Lambda:=H^{n}_{d\epsilon}\circ \Lambda^{n+1}\colon \bigwedge^{n+1}H^0(X,\mathcal{E})\to H^0(X,\det\mathcal{F}).
\end{equation} Let $W\subset \Ker(\partial_\xi^1)\subset H^0(X,\mathcal{F})$ be a vector subspace of
dimension $n+1$ and let $\mathcal{B}:=\{\eta_1,\ldots,\eta_{n+1}\}$
be a basis of $W$. By definition we can take liftings 
$s_1,\dots,s_{n+1}\in H^0(X,\mathcal{E})$ such that $\rho_1(s_i)=\eta_i$. If we consider the natural map
\begin{equation*}
\lambda^n\colon \bigwedge^{n}H^0(X,\mathcal{F})\to H^0(X,\det\mathcal{F}),
\end{equation*} we can define the subspace $\lambda^n W\subset H^0(X,\det\mathcal{F})$ generated by
\begin{equation*}
\omega_i:=\lambda^n(\eta_1\wedge\ldots\wedge\widehat{\eta_i}\wedge\ldots\wedge\eta_{n+1})
\end{equation*} for $i=1,\dots,n+1$.

\begin{defn}
The section
\begin{equation*}
\omega_{\xi,W,\mathcal{B}}:=\Lambda(s_1\wedge\ldots\wedge s_{n+1})\in H^0(X,\det\mathcal{F}).
\end{equation*} 
is called \emph{an adjoint form} of $\xi,W,\mathcal{B}$.
\end{defn}
\begin{defn}
The class
\begin{equation*}
[\omega_{\xi,W,\mathcal{B}}]\in \frac{H^0(X,\det\mathcal{F})}{\lambda^nW}
\end{equation*} is called \emph{an adjoint image} of $W$ by $\xi$.
\end{defn}

\begin{rem}
The class $[\omega_{\xi,W,\mathcal{B}}]$ depends on $\xi, W$ and 
$\mathcal{B}$ only. The form $\omega_{\xi,W,\mathcal{B}}$ depends 
also on the choice of the liftings $s_1,\dots,s_{n+1}$.
\end{rem}

\begin{rem}
\label{aggiuntazero}
If we consider another basis 
$\mathcal{B}'=\{\eta'_1,\ldots,\eta'_{n+1}\}$ of $W$, then $[\omega_{\xi,W,\mathcal{B}}]=k[\omega_{\xi,W,\mathcal{B}'}]$
where $k$ is the determinant of the matrix of the 
change of basis. In particular $[\omega_{\xi,W,\mathcal{B}}]=0$ if and only if $[\omega_{\xi,W,\mathcal{B}'}]=0$.
\end{rem}

\begin{rem}
\label{sollevamenti}
If $[\omega_{\xi,W,\mathcal{B}}]=0$ then we can find liftings 
$s_i\in H^0(X,\mathcal{E})$, $i=1,\dots,n+1$, such that $\Lambda^{n+1}(s_1\wedge \ldots\wedge s_{n+1})=0$ in $H^0(X,\det\mathcal{E})$. In particular with this choice we have $\omega_{\xi,W,\mathcal{B}}=0$ in $H^0(X,\det\mathcal{F})$.
\end{rem}
\begin{proof}
By hypothesis there exist $a_i\in \mathbb{C}$ such that
\begin{equation}
\label{ipotesi}
\omega_{\xi,W,\mathcal{B}}=\sum^{n+1}_{i=1} a_i\cdot \lambda^n(\eta_1\wedge\ldots\wedge\widehat{\eta_i}\wedge\ldots\wedge\eta_{n+1}).
\end{equation}
We can define a new lifting for the element $\eta_i$:
\begin{equation*}
\tilde{s_i}:=s_i+(-1)^{n-i}a_i\cdot d\epsilon.
\end{equation*} Now it is a trivial computation to show that $\Lambda^{n+1}(\tilde{s_1}\wedge \ldots\wedge \tilde{s}_{n+1})=0$.
\end{proof}

\begin{defn}
If $\lambda^nW$ is nontrivial we denote by $|\lambda^n W|\subset \mP(H^0(X,\det\mathcal{F}))$ the induced sublinear system. We call $D_W$ the fixed divisor of this linear system and $Z_W$ the base locus of its moving part $|M_W|\subset\mP(H^0(X,\det\mathcal{F}(-D_W)))$.
\end{defn}

From the natural map
$
\mathcal{F}(-D_W)\to\mathcal{F}
$ we have a homomorphism in cohomology
\begin{equation*}
H^1(X,\mathcal{F}^\vee)\stackrel{\rho}{\rightarrow} H^1(X,\mathcal{F}^\vee(D_W));
\end{equation*} we call $\xi_{D_W}=\rho(\xi)$. By obvious identifications the natural map
\begin{equation*}
\text{Ext}^1(\det\sF,\bigwedge^{n-1}\sF)\to\text{Ext}^1(\det\sF(-D_W),\bigwedge^{n-1}\sF)
\end{equation*} gives an extension $\sE^{(n)}$ and a commutative diagram:

\begin{equation}
\label{diagramma2}
\xymatrix { 
&&0\ar[d]&0\ar[d]\\
0 \ar[r] & \bigwedge^{n-1}\mathcal{F} \ar[r]\ar@{=}[d] &\mathcal{E}^{(n)} \ar[r]^-{\alpha} \ar[d]^{\psi}& \det\mathcal{F}(-D_W) \ar[d]^{\cdot D_W}\ar[r]&0\\
0 \ar[r] &\bigwedge^{n-1}\mathcal{F} \ar[r] &\bigwedge^{n}\mathcal{E} \ar[r]^{\rho_n}\ar[d] & \det\mathcal{F} \ar[r]\ar[d]&0 \\
&&\det\mathcal{F}\otimes_{\mathcal{O}_X}\mathcal{O}_{D_W}\ar@{=}[r]\ar[d]&\det\mathcal{F}\otimes_{\mathcal{O}_X}\mathcal{O}_{D_W}\ar[d]\\
&&0&0.
}
\end{equation}

We prove a slightly more general version of \cite[Theorem 1.5.1]{PZ}. Also the proof is slightly different because it does not use the Grothendieck duality.
\begin{thm}[Adjoint Theorem]
\label{teoremaaggiunta}
Let $X$ be a compact $m$-dimensional complex smooth variety. 
Let $\mathcal{F}$ be a rank $n$ locally free sheaf on $X$ and $\xi\in 
H^1(X,\mathcal{F}^\vee)$ the extension class of the exact sequence
(\ref{unouno}). Let $W\subset\Ker(\partial_\xi^1)\subset 
H^0(X,\mathcal{F})$ and $[\omega]$ one of its adjoint images.
If $[\omega]=0$ then $\xi\in\Ker(H^1(X,\mathcal{F}^\vee)\to H^1(X,\mathcal{F}^\vee(D_W)))$.
\end{thm}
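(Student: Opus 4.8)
The plan is to reduce the statement to the splitting of the top row of diagram (\ref{diagramma2}) and then to construct that splitting explicitly out of wedge products of the liftings, avoiding Grothendieck duality. Recall that under the identifications $H^1(X,\sF^\vee)\cong\text{Ext}^1(\det\sF,\bigwedge^{n-1}\sF)$ and $H^1(X,\sF^\vee(D_W))\cong\text{Ext}^1(\det\sF(-D_W),\bigwedge^{n-1}\sF)$, the class $\xi_{D_W}$ is precisely the extension class of the top row $0\to\bigwedge^{n-1}\sF\to\sE^{(n)}\xrightarrow{\alpha}\det\sF(-D_W)\to0$ of (\ref{diagramma2}). Thus $\xi\in\Ker(H^1(X,\sF^\vee)\to H^1(X,\sF^\vee(D_W)))$ is equivalent to the existence of a section of $\alpha$. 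Since $[\omega]=0$, Remark \ref{sollevamenti} permits a choice of liftings $s_1,\dots,s_{n+1}$ with $s_1\wedge\dots\wedge s_{n+1}=0$ in $H^0(X,\det\sE)$; this vanishing is what drives the whole construction.

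Next I would introduce $\tau_i:=s_1\wedge\dots\wedge\widehat{s_i}\wedge\dots\wedge s_{n+1}\in H^0(X,\bigwedge^n\sE)$. By construction $\rho_n(\tau_i)=\omega_i$, and since $D_W$ is the fixed divisor of $|\lambda^nW|$ each $\omega_i$ factors as $\omega_i=s_{D_W}\cdot\omega_i'$ with $\omega_i'\in H^0(X,\det\sF(-D_W))$, where $s_{D_W}$ cuts out $D_W$. In a local frame $e_0,\dots,e_n$ of $\sE$ with $d\epsilon=e_0$, writing $s_i=\sum_j t_{ij}e_j$ and $T=(t_{ij})$, one gets $\tau_i=\sum_k N_{ik}f_k$, where $f_k=e_0\wedge\dots\wedge\widehat{e_k}\wedge\dots\wedge e_n$ and $N_{ik}$ is the minor of $T$ deleting row $i$ and column $k$, while $\omega_i=N_{i0}\bar f_0$ with $\bar f_0=\rho_n(f_0)$. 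The hypothesis $s_1\wedge\dots\wedge s_{n+1}=0$ reads $\det T=0$ identically.

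The heart of the argument is the identity $\omega_i'\,\tau_j=\omega_j'\,\tau_i$ in $H^0(X,\det\sF(-D_W)\otimes\bigwedge^n\sE)$. Multiplying by the nonzerodivisor $s_{D_W}$ turns it into $\omega_i\tau_j=\omega_j\tau_i$, whose coefficient along $\bar f_0\otimes f_k$ is the scalar relation $N_{i0}N_{jk}=N_{j0}N_{ik}$; up to the common sign $(-1)^{i+j+k}$ this is the vanishing of the $2\times2$ minor $C_{i0}C_{jk}-C_{j0}C_{ik}$ of the adjugate $\mathrm{adj}(T)$, where $C_{ab}$ are the cofactors of $T$. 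Because $\det T=0$ everywhere, $T$ has corank $\ge1$ at each point, so $\mathrm{adj}(T)$ has rank $\le1$ and all its $2\times2$ minors vanish; hence the identity holds pointwise, and therefore as an identity of sections, after which torsion-freeness allows cancellation of $s_{D_W}$. I expect this to be the main obstacle, both in controlling the signs and in ensuring that the relation is genuinely global and not merely generic.

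Finally I would glue the $\tau_i$ into a morphism. On $U_i=\{\omega_i'\neq0\}$ the section $\omega_i'$ trivializes $\det\sF(-D_W)$, so setting $\beta(\omega_i')=\tau_i$ defines $\beta\colon\det\sF(-D_W)\to\bigwedge^n\sE$ over $U_i$; the identity of the previous step makes these local definitions agree on the overlaps $U_i\cap U_j$, yielding $\beta$ on $X\setminus Z_W$. As $Z_W=\Bs|M_W|$ has codimension $\ge2$ and $X$ is smooth, $\beta$ extends across $Z_W$ by Hartogs. By construction $\rho_n\circ\beta=(\cdot\,s_{D_W})$, so $\beta$ lands in $\sE^{(n)}=\Ker(\bigwedge^n\sE\to\det\sF\otimes\sO_{D_W})$ and induces a morphism $\sigma\colon\det\sF(-D_W)\to\sE^{(n)}$ with $\alpha\circ\sigma=\mathrm{id}$. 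Hence the top row of (\ref{diagramma2}) splits, $\xi_{D_W}=0$, and therefore $\xi\in\Ker(H^1(X,\sF^\vee)\to H^1(X,\sF^\vee(D_W)))$, as required.
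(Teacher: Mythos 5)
Your proposal is correct and follows essentially the same route as the paper's proof: both exploit the choice of liftings with $s_1\wedge\cdots\wedge s_{n+1}=0$ to show that the sections $\Omega_i=\tau_i$ become proportional, with ratios governed by the $\tilde{\omega}_i$, away from $Z_W$, and both conclude by exhibiting $\det\sF(-D_W)$ as a direct summand of $\sE^{(n)}$ splitting $\alpha$. The differences are only in packaging: you derive the key proportionality from the rank-$\leq 1$ property of the adjugate of a singular matrix where the paper uses the Laplace-expansion relations leading to $f_1\cdot s_1+\cdots+(-1)^{n}f_{n+1}\cdot s_{n+1}=0$, and you obtain the splitting by an explicit gluing extended across $Z_W$ by Hartogs where the paper passes through the torsion-free image sheaf $\Ima\tilde{\mu}$ and its reflexive hull.
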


\begin{proof} By remark \ref{aggiuntazero}, the vanishing of $[\omega]$
does not depend on the choice of a particular basis of $W$. Let 
$\mathcal{B}=\{ \eta_1,\dots,\eta_{n+1}\}$ be a  basis of $W$.

By hypothesis, $\omega\in \lambda^n W$, hence by remark \ref{sollevamenti} 
we can choose liftings $s_i\in H^0(X,\sE)$ of $\eta_i$ with $\Lambda^{n+1}(s_1\wedge \ldots\wedge s_{n+1})=0$. Consider
\begin{equation*}
s_1\wedge\ldots\wedge\widehat{s_i}\wedge\ldots\wedge s_{n+1}\in \bigwedge^nH^0(X,\mathcal{E})
\end{equation*} and define $\Omega_i$ its image in $H^0(X,\bigwedge^n\mathcal{E})$ through the natural map.
By construction 
$
\rho_n(\Omega_i)=\omega_i,
$ where we remind the reader that
$
\omega_i:=\lambda^n(\eta_1\wedge\ldots\wedge\widehat{\eta_i}\wedge\ldots\wedge\eta_{n+1}).
$ Since $D_W$ is the fixed divisor of the linear system $|\lambda^n W|$ and the sections $\omega_i$ generate this linear system, then the $\omega_i$ are in the image of
\begin{equation*}
\det\mathcal{F}(-D_W)\stackrel{\cdot D_W}{\longrightarrow}\det\mathcal{F}, 
\end{equation*} and so we can find sections $\tilde{\omega}_i\in H^0(X,\det\mathcal{F}(-D_W))$ such that 
\begin{equation}
\label{d}
d\cdot\tilde{\omega}_i=\omega_i,
\end{equation} where $d$ is a global section of $\mathcal{O}_X(D_W)$ with $(d)=D_W$. Hence, using the commutativity of 
(\ref{diagramma2}), we can find liftings $\tilde{\Omega}_i\in 
H^0(X,\mathcal{E}^{(n)})$ of the sections $\Omega_i$, $i=1,\ldots,n+1$.

The evaluation map
\begin{equation*}
\bigoplus_{i=1}^{n+1}\mathcal{O}_X\stackrel{\tilde{\mu}}{\longrightarrow}\mathcal{E}^{(n)}
\end{equation*} given by the global sections $\tilde{\Omega}_i$, composed with the map $\alpha$ of 
(\ref{diagramma2}), induces a map $\mu$ which fits into the following diagram
\begin{equation*}
\xymatrix { &&\bigoplus_{i=1}^{n+1}\mathcal{O}_X\ar[d]^{\tilde{\mu}} \ar@{=}[r]&\bigoplus_{i=1}^{n+1}\mathcal{O}_X\ar[d]^{\mu}\\
0 \ar[r] & \bigwedge^{n-1}\mathcal{F} \ar[r] &\mathcal{E}^{(n)} \ar[r]^-\alpha & \det\mathcal{F}(-D_W) \ar[r]&0.
}
\end{equation*} The morphism $\mu$ is given by the multiplication by 
$\tilde{\omega}_i$ on the $i$-th component. 
Consider the sheaf $\Ima\tilde{\mu}$. Locally, on an open subset
$U$, we can write 
\begin{equation}\label{sezloc}
s_{i|U}=\sum_{j=1}^n b_j^i\cdot \sigma_j+c^i\cdot d\epsilon,
\end{equation} where $(\sigma_1,\dots,\sigma_n,d\epsilon)$ is a family of local generators of $\sE$. By $\Lambda^{n+1}(s_1\wedge \ldots\wedge s_{n+1})=0$ we have
\begin{equation*}
\begin{vmatrix}
	b^2_1&\dots&b^2_n\\
	\vdots &  \ddots & \vdots  \\
	b^{n+1}_1&\dots&b^{n+1}_n
\end{vmatrix}c^1+ \cdots+(-1)^{n}\begin{vmatrix}
	b^1_1&\dots&b^1_n\\
	\vdots &  \ddots & \vdots \\
	  b^{n}_1&\dots&b^n_n
\end{vmatrix}c^{n+1} =0.
\end{equation*} Obviously also
\begin{equation*}
\begin{vmatrix}
	b^2_1&\dots&b^2_n\\
	\vdots &  \ddots & \vdots  \\
	b^{n+1}_1&\dots&b^{n+1}_n
\end{vmatrix}b^1_j+ \cdots+(-1)^{n}\begin{vmatrix}
	b^1_1&\dots&b^1_n\\
	\vdots &  \ddots & \vdots \\
	  b^{n}_1&\dots&b^n_n
\end{vmatrix}b^{n+1}_j =0
\end{equation*} for $j=1,\dots,n$, where it is easy to see that
\begin{equation*}
\begin{vmatrix}
	b^2_1&\dots&b^2_n\\
	\vdots &  \ddots & \vdots  \\
	b^{n+1}_1&\dots&b^{n+1}_n
\end{vmatrix},\ldots, \begin{vmatrix}
	b^1_1&\dots&b^1_n\\
	\vdots &  \ddots & \vdots \\
	  b^{n}_1&\dots&b^n_n
\end{vmatrix}
\end{equation*} 
are respectively the local expressions of the sections 
$\omega_1,\dots,\omega_{n+1}$. Now let $d_U$ be a local equation of $d\in 
H^0(X,\mathcal{O}_X(D_W))$. By (\ref{d}) we have 
\begin{equation*}
\begin{vmatrix}
	b^2_1&\dots&b^2_n\\
	\vdots &  \ddots & \vdots  \\
	b^{n+1}_1&\dots&b^{n+1}_n
\end{vmatrix}=d_U\cdot f_1,
\end{equation*}
\begin{equation*}
\vdots
\end{equation*}
\begin{equation*}
\begin{vmatrix}
	b^1_1&\dots&b^1_n\\
	\vdots &  \ddots & \vdots \\
	  b^{n}_1&\dots&b^n_n
\end{vmatrix}=d_U\cdot f_{n+1}
\end{equation*} where the functions $f_i$ are local expressions of 
$\tilde{\omega_i}$. Then we have
\begin{equation*}
d_U\cdot (f_1\cdot c^1+ \cdots+(-1)^{n} f_{n+1}\cdot c^{n+1}) =0
\end{equation*} and
\begin{equation*}
d_U\cdot (f_1\cdot b^1_j+ \cdots+(-1)^{n} f_{n+1}\cdot b^{n+1}_j) =0
\end{equation*} for $j=1,\dots,n$. Since by definition $d_U$ vanishes on $D_W\cap U$, then on $U$ 
\begin{equation*}
f_1\cdot c^1+ \cdots+(-1)^{n} f_{n+1}\cdot c^{n+1} =0
\end{equation*} and
\begin{equation*}
f_1\cdot b^1_j+ \cdots+(-1)^{n} f_{n+1}\cdot b^{n+1}_j =0
\end{equation*}for $j=1,\dots,n$. We immediately obtain
\begin{equation}
f_1\cdot s_1+\dots+(-1)^{n}f_{n+1}\cdot s_{n+1}=0.
\end{equation} By definition  the scheme $Z_W\cap U$ is given by $(f_1=0,\dots,f_{n+1}=0)$. Let $P\in U$ be a point not in $\text{supp}(Z_W)$. At least one of the functions $f_i$ can be inverted in a neighbourhood of $P$, for example let the germ of $f_1$ be nonzero in $P$. We have then a relation
\begin{equation*}
s_1=g_2\cdot s_2+\dots+g_{n+1}\cdot s_{n+1}
\end{equation*} and so we can easily find holomorphic functions $h_i$ such that
\begin{equation*}
\Omega_i=h_i\cdot \Omega_1
\end{equation*} for $i=2,\dots,n+1$. Since
\begin{equation*}
\mathcal{E}^{(n)}\stackrel{\psi}{\rightarrow}\bigwedge^n\mathcal{E}
\end{equation*} is injective, then we have
\begin{equation*}
\tilde{\Omega}_i=h_i\cdot \tilde{\Omega}_1
\end{equation*} for $i=2,\dots,n+1$. The section $\tilde{\Omega}_1$ is nonzero, otherwise $\Omega_i=0$ for $i=1,\dots,n+1$ and then also $\omega_i=0$ for $i=1,\dots,n+1$, but this fact contradicts our hypothesis that $\lambda^nW$ is not trivial. In particular we have proved that the sheaf $\Ima\tilde{\mu}$ has rank one outside $Z_W$. Furthermore, since it is a
subsheaf of the locally free sheaf $\mathcal{E}^{(n)}$, then $\Ima\tilde{\mu}$ is torsion free. Denote $\Ima\tilde{\mu}$ by $\sL$.

By definition
\begin{equation*}
\Ima\mu:=\det\mathcal{F}(-D_W)\otimes \mathcal{I}_{Z_W}.
\end{equation*}

The morphism 
\begin{equation*}
\alpha\colon\mathcal{E}^{(n)}\to\det\mathcal{F}(-D_W)
\end{equation*} restricts to a surjective morphism, that we continue to call $\alpha$,
\begin{equation*}
\sL\stackrel{\alpha}{\rightarrow}\Ima\mu,
\end{equation*} between two sheaves that are locally free of rank one 
outside $Z_W$. The kernel of $\alpha$ is then a torsion subsheaf of 
$\sL$, which is torsion free, hence $\alpha$ gives an isomorphism 
$\sL\cong\Ima\mu$.

Since $X$ is normal (actually it is smooth) and $Z_W$ has codimension at least 2, we have that the inclusion
\begin{equation*}
\mathcal{E}^{(n)}\supset\mathcal{L}^{\vee\vee}\cong\det\mathcal{F}(-D_W)
\end{equation*} gives the splitting
\begin{equation*}
\xymatrix { 0\ar[r] &\bigwedge^{n-1}\mathcal{F}\ar[r]&\mathcal{E}^{(n)}\ar[r]&
\det\mathcal{F}(-D_W)\ar[r]\ar@/_1.5pc/[l]&0.
}
\end{equation*}
Since $\xi_{D_W}$ is the element of $H^1(X,\mathcal{F}^\vee(D_W))$ associated to this extension, we conclude that $\xi_{D_W}=0$.

\end{proof}

As an easy consequence of the Adjoint Theorem we have the 
infinitesimal Torelli theorem for primitive varieties with 
$p_{g}=q=n+1$; for the notion of primitiveness see \cite[Definition 
1.2.4]{victor1}, see also our introduction to section \ref{sezionefamiglie} of this 
paper.

\begin{cor}\label{casainperiferia} Let $X$ be an $n$-dimensional primitive variety of general type such that 
    $p_{g}=q=n+1$. If $\Omega_{X}^{1}$ is generated by global sections then
    the $1$-infinitesimal Torelli theorem holds for $X$.
 \end{cor}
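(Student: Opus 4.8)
The plan is to apply the Adjoint Theorem (Theorem~\ref{teoremaaggiunta}) with $\mathcal{F}=\Omega^1_X$, so that $\mathcal{F}^\vee=\Theta_X$, $\det\mathcal{F}=\det\Omega^1_X$ is the canonical bundle, and $H^1(X,\mathcal{F}^\vee)=H^1(X,\Theta_X)$. Under this identification the coboundary $\partial^1_\xi\colon H^0(X,\Omega^1_X)\to H^1(X,\mathcal{O}_X)$ is exactly the homomorphism $d\pi^0_1(\xi)$, since both are given by cup product with $\xi$ followed by contraction. Therefore $\xi\in\Ker(d\pi^0_1)$ if and only if $\partial^1_\xi=0$, and proving the $1$-infinitesimal Torelli theorem amounts to showing that $\partial^1_\xi=0$ forces $\xi=0$. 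So I would fix such a $\xi$ and set $W:=H^0(X,\Omega^1_X)=\Ker(\partial^1_\xi)$. Because $q=n+1$ and $\mathcal{F}$ has rank $n$, the subspace $W$ has dimension $n+1=\rank\mathcal{F}+1$, which is exactly the numerology required to form adjoint forms and adjoint images for $\xi$ and $W$.

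First I would analyse the linear system $|\lambda^n W|$. Since $\Omega^1_X$ is generated by global sections, taking $\bigwedge^n$ of the evaluation surjection $H^0(X,\Omega^1_X)\otimes\mathcal{O}_X\twoheadrightarrow\Omega^1_X$ shows that the subspace $\lambda^n W\subset H^0(X,\det\Omega^1_X)$ generates $\det\Omega^1_X$ at every point. Hence $|\lambda^n W|$ is base-point free; in particular its fixed divisor and base locus vanish, $D_W=0$ and $Z_W=\emptyset$. Next I would show that every adjoint image $[\omega]$ is zero. The space $\bigwedge^n W$ has dimension $\binom{n+1}{n}=n+1=p_g=\dim H^0(X,\det\Omega^1_X)$, so $\lambda^n\colon\bigwedge^n W\to H^0(X,\det\Omega^1_X)$ is a linear map between vector spaces of the same dimension. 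By primitiveness of $X$ (cf.~\cite[Definition~1.2.4]{victor1}) this map has no kernel, hence it is an isomorphism and $\lambda^n W=H^0(X,\det\Omega^1_X)$. Consequently the adjoint image $[\omega]\in H^0(X,\det\Omega^1_X)/\lambda^n W$ vanishes, for any choice of basis of $W$ and of liftings.

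Finally, the Adjoint Theorem applies: from $[\omega]=0$ it yields $\xi\in\Ker\bigl(H^1(X,\Theta_X)\to H^1(X,\Theta_X(D_W))\bigr)$. Since $D_W=0$, the sheaf map $\Theta_X(D_W)\to\Theta_X$ is the identity, this kernel is trivial, and therefore $\xi=0$. This proves the injectivity of $d\pi^0_1$, i.e. the $1$-infinitesimal Torelli theorem for $X$. The step I expect to be the genuine content, rather than bookkeeping, is the vanishing $[\omega]=0$: it rests on extracting from primitiveness the injectivity (equivalently, by the dimension count, the surjectivity) of the wedge map $\lambda^n\colon\bigwedge^n H^0(X,\Omega^1_X)\to H^0(X,\det\Omega^1_X)$, which geometrically says that the cotangent Gauss map of $X$ is non-degenerate. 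The complementary hypothesis that $\Omega^1_X$ is globally generated is used only to guarantee $D_W=0$, so that the conclusion of the Adjoint Theorem is the vanishing of $\xi$ itself rather than merely of its image in $H^1(X,\Theta_X(D_W))$.
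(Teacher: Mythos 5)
Your proposal is correct and follows essentially the same route as the paper: take $W=H^0(X,\Omega^1_X)=\Ker\partial_\xi$, use primitivity to get $\lambda^nW=H^0(X,\omega_X)$ (hence $[\omega]=0$), use global generation to get $D_W=0$, and conclude by the Adjoint Theorem. Your write-up just makes explicit the two points the paper leaves implicit (why $D_W=0$ and why every element of $\bigwedge^nW$ is covered by the primitivity/Castelnuovo--de Franchis argument, since for $\dim W=n+1$ every element of $\bigwedge^nW$ is decomposable).
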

 \begin{proof} Take $W=H^0(X,\Omega^1_X)$. Since $X$ is primitive,
     $\lambda^nW=\left\langle \omega_1,\dots,\omega_{n+1}\right\rangle=H^0(X,\omega_X).$
 By the hypothesis $W=\Ker\partial_{\xi}$, we can construct an adjoint form $\omega_{\xi,W,\sB}$ and obviously $\omega_{\xi,W,\sB}\in\lambda^nW$. Hence the claim holds by Theorem \ref{teoremaaggiunta}.
\end{proof}
The cases studied in Corollary \ref{casainperiferia} do occur.

\begin{cor} Let $X$ be a surface of general type such that 
    $p_{g}=q=3$ and $K_{X}^{2}=6$. 
    Then the $1$-infinitesimal Torelli theorem holds for $X$.
\end{cor}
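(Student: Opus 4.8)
The plan is to reduce every such $X$ to a symmetric square and then apply Corollary \ref{casainperiferia} with $n=2$, whose numerical hypothesis $p_g=q=n+1=3$ is exactly our assumption. By the classification of minimal surfaces of general type with $p_g=q=3$ (Pirola; Hacon--Pardini), those with $K_X^2=6$ are precisely the symmetric squares $X=\mathrm{Sym}^2C$ of a smooth genus $3$ curve $C$, with Albanese morphism the Abel--Jacobi map $X\to W_2\subset J(C)$, birational onto its image (see also \cite{CCM}). So I must verify, for $X=\mathrm{Sym}^2C$, the two hypotheses of that corollary: that $X$ is primitive and that $\Omega^1_X$ is globally generated.

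First I would record the relevant cohomology via the double cover $\pi\colon C\times C\to X$ given by the swap $\sigma$. Taking $\sigma$-invariants in the K\"unneth decomposition yields $H^0(X,\Omega^1_X)\cong H^0(C,\Omega^1_C)$ and $H^0(X,K_X)=H^0(X,\Omega^2_X)\cong\bigwedge^2 H^0(C,\Omega^1_C)$, of dimensions $q=3$ and $p_g=3$. Taking $W=H^0(X,\Omega^1_X)$ with a basis $\eta_1,\eta_2,\eta_3$ lifting $\alpha_1,\alpha_2,\alpha_3\in H^0(\Omega^1_C)$, the generators $\omega_i=\lambda^2(\eta_j\wedge\eta_k)$ descend from the antisymmetrised forms $\mathrm{pr}_1^*\alpha_j\wedge\mathrm{pr}_2^*\alpha_k-\mathrm{pr}_1^*\alpha_k\wedge\mathrm{pr}_2^*\alpha_j$ and hence span $\bigwedge^2 H^0(\Omega^1_C)=H^0(X,K_X)$. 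Thus $\lambda^2W=H^0(X,K_X)$: the surface $X$ is primitive, and any adjoint form $\omega_{\xi,W,\mathcal{B}}$ automatically lies in $\lambda^2W$, so its adjoint image vanishes for free.

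The hard part will be proving $D_W=0$, i.e. that $\Omega^1_X$ is generated by its global sections. On $C\times C$ the common zeros of the $\omega_i$ are the pairs $(p,q)$ whose images under the canonical map $C\to\mP(H^0(\Omega^1_C)^\vee)$ coincide. Each $\omega_i$ vanishes simply along the diagonal $\Delta$, but $\Delta$ is precisely the ramification of $\pi$, so under the identification $K_X=\pi^*K_X+\Delta$ this diagonal vanishing is absorbed and leaves no fixed component downstairs. What remains to be ruled out is a residual common divisor coming from pairs of distinct points with equal canonical image; excluding such a fixed divisor on $X$ is the crux of the argument, and it is exactly here that the fine geometry of the canonical system of $C$ must be used. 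Granting it, $D_W=0$ and the residual base locus $Z_W$ has codimension $\geq 2$.

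With both hypotheses secured, the theorem drops out of Corollary \ref{casainperiferia}. Indeed, for any $\xi\in H^1(X,\Theta_X)$ with $\partial_\xi^1=0$ one has $W=H^0(X,\Omega^1_X)=\Ker\partial_\xi^1$; the adjoint form lies in $\lambda^2W$ by the second step, so the Adjoint Theorem \ref{teoremaaggiunta} places $\xi$ in $\Ker\bigl(H^1(\Theta_X)\to H^1(\Theta_X(D_W))\bigr)$. Since $D_W=0$ this kernel is zero, forcing $\xi=0$. Hence $d\pi^0_1$ is injective and the $1$-infinitesimal Torelli theorem holds for $X$.
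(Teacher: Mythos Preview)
Your overall strategy is the same as the paper's: reduce to the symmetric square via the classification of minimal surfaces with $p_g=q=3$, then invoke Corollary~\ref{casainperiferia}. But there is a real gap at precisely the point you flag as ``the crux.'' The classification (as cited in the paper via \cite[Proposition~3.17~(i)]{CCM}) identifies $X$ with $\mathrm{Sym}^2C$ for a \emph{non-hyperelliptic} genus~$3$ curve; you drop that adjective, and it is exactly what makes the argument go through. The ``residual common divisor coming from pairs of distinct points with equal canonical image'' that you propose to exclude is empty if and only if the canonical map of $C$ is injective, i.e.\ if and only if $C$ is non-hyperelliptic. When $C$ \emph{is} hyperelliptic, that locus is the rational $(-2)$-curve $\{p+\iota(p):p\in C\}\cong\mP^1$, one has $D_{\Omega^1_X}\neq 0$, and---as the paper's own remark immediately after the Corollary points out---infinitesimal Torelli actually \emph{fails}. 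So your ``Granting it, $D_W=0$'' cannot be granted without the non-hyperelliptic hypothesis.

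Once you insert that hypothesis from the classification, the step closes in one line: the canonical map $C\hookrightarrow\mP^2$ is an embedding, so no two distinct points have the same image, hence the $\omega_i$ have no common zero off the diagonal, and after absorbing $\Delta$ you get $D_W=0$ and $\Omega^1_X$ globally generated. The rest of your argument is then correct and coincides with the paper's.
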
 
\begin{proof}
Indeed a surface of general type such that 
    $p_{g}=q=3$ and $K_{X}^{2}=6$ is the symmetric product of a 
    non-hyperelliptic curve of genus $3$ by \cite[Proposition 3.17, (i)]{CCM}. 
    Hence our proof of Corollary \ref{casainperiferia} applies.
\end{proof}
\begin{rem} Note that for surfaces of general type which are the 
symmetric product of a hyperelliptic curve of genus $3$, the infinitesimal Torelli 
theorem does not hold by \cite{OS}. Indeed $D_{\Omega^{1}_{X}}$ exists and it is a 
rational $-2$ curve; see: \cite[Proposition 3.17, (ii)]{CCM}. Hence an 
infinitesimal Torelli deformation is supported on 
$D_{\Omega^{1}_{X}}$.
\end{rem}

\subsection{An inverse of the Adjoint Theorem}

Let $X$ be a smooth compact complex variety of dimension $m$ and $\mathcal{F}$
a locally free sheaf of rank $n$. Let $\xi\in 
H^1(X,\mathcal{F}^\vee)$ be the class associated to the extension 
(\ref{unouno}). Fix $W=\left\langle \eta_1,\dots,\eta_{n+1}\right\rangle\subset\Ker(\partial_\xi^1)\subset H^0(X,\mathcal{F})$
a subspace of dimension $n+1$ and take an adjoint form $\omega=\omega_{\xi,W,\mathcal{B}}$ for chosen
$s_1,\dots,s_{n+1}\in H^0(X,\mathcal{E})$ liftings of respectively 
$\eta_1,\dots,\eta_{n+1}$. As above set $|\lambda^n W|=D_W+|M_W|$. 
Note that by construction $\omega\in H^{0}(X, 
\det\sF\otimes\sO_{X}(-D_{W}))$.

\begin{thm}\label{viceversa} Assume that 
    $h^0(X,\mathcal{O}_X(D_W))=1$. If $\xi_{D_W}=0$ then $[\omega]=0$.
\end{thm}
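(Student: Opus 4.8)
The plan is to sidestep the auxiliary bundle $\mathcal{E}^{(n)}$ and the diagram (\ref{diagramma2}) altogether and to argue directly on the defining sequence (\ref{unouno}), by combining a single retraction of $\mathcal{E}$ extracted from $\xi_{D_W}=0$ with a global ``adjugate'' identity. First I would reformulate the hypothesis $\xi_{D_W}=0$. Applying $\Hom(-,\mathcal{O}_X(D_W))$ to (\ref{unouno}) gives an exact sequence
\[
\Hom(\mathcal{E},\mathcal{O}_X(D_W))\xrightarrow{\,\circ d\epsilon\,}H^0(X,\mathcal{O}_X(D_W))\xrightarrow{\ \delta\ }\mathrm{Ext}^1(\mathcal{F},\mathcal{O}_X(D_W))=H^1(X,\mathcal{F}^\vee(D_W)),
\]
and the connecting map $\delta$ is cup product with $\xi$, so it sends the canonical section $d$ (with $(d)=D_W$) to $d_*\xi=\rho(\xi)=\xi_{D_W}$. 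Hence $\xi_{D_W}=0$ is exactly the assertion that $d$ lifts, i.e. that there is a retraction $r\colon\mathcal{E}\to\mathcal{O}_X(D_W)$ whose composite $\mathcal{O}_X\xrightarrow{d\epsilon}\mathcal{E}\xrightarrow{r}\mathcal{O}_X(D_W)$ is multiplication by $d$; on sections, $r(d\epsilon)=d$.

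Second, I would establish the global adjugate identity. Since $W\subset\Ker(\partial_\xi^1)$, each $\eta_j$ lifts to $s_j\in H^0(X,\mathcal{E})$, and in a local frame $(\sigma_1,\dots,\sigma_n,d\epsilon)$ of $\mathcal{E}$ one writes $s_j=\sum_k b^j_k\sigma_k+c^j d\epsilon$. The $n\times n$ minors of the matrix $(b^j_k)$ are the local expressions of the $\omega_j$, while $\det(b^j_k\mid c^j)$ is the local expression of $\omega$. The Laplace expansion along the last column, together with the vanishing of the determinant with a repeated column, gives $\sum_j\epsilon_j\omega_j b^j_k=0$ for each $k$ and $\sum_j\epsilon_j\omega_j c^j=\omega$ for suitable signs $\epsilon_j\in\{\pm1\}$ (in fact $\epsilon_j=(-1)^{j-1}$, but the overall sign is immaterial for the conclusion). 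Re-assembling $s_j=\sum_k b^j_k\sigma_k+c^j d\epsilon$, these frame-by-frame relations patch to the coordinate-free identity
\[
\sum_{j=1}^{n+1}\epsilon_j\,\omega_j\otimes s_j=\omega\otimes d\epsilon\qquad\text{in }H^0(X,\det\mathcal{F}\otimes\mathcal{E}).
\]
(Incidentally this re-proves that $\omega$ vanishes on $D_W$, since the $\omega_j$ do and $d\epsilon$ is nowhere zero.) Because every $\omega_j$ and $\omega$ lie in $H^0(X,\det\mathcal{F}(-D_W))$, I write $\omega_j=d\,\widetilde{\omega}_j$ and $\omega=d\,\overline{\omega}$ and cancel the nonzerodivisor $d$ in the locally free sheaf $\det\mathcal{F}\otimes\mathcal{E}$, obtaining the same identity one twist down,
\[
\sum_{j=1}^{n+1}\epsilon_j\,\widetilde{\omega}_j\otimes s_j=\overline{\omega}\otimes d\epsilon\qquad\text{in }H^0(X,\det\mathcal{F}(-D_W)\otimes\mathcal{E}).
\]

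Finally, I would apply $\mathrm{id}\otimes r$ to this last identity, landing in $H^0(X,\det\mathcal{F}(-D_W)\otimes\mathcal{O}_X(D_W))=H^0(X,\det\mathcal{F})$. The right-hand side becomes $\overline{\omega}\cdot r(d\epsilon)=\overline{\omega}\cdot d=\omega$, while the left-hand side becomes $\sum_j\epsilon_j\,\widetilde{\omega}_j\cdot r(s_j)$ with each $r(s_j)\in H^0(X,\mathcal{O}_X(D_W))$. Here the hypothesis $h^0(X,\mathcal{O}_X(D_W))=1$ enters decisively: it forces $r(s_j)=a_j\,d$ for constants $a_j\in\mathbb{C}$, so $\widetilde{\omega}_j\cdot r(s_j)=a_j\,\omega_j$ and therefore $\omega=\sum_j\epsilon_j a_j\,\omega_j\in\lambda^nW$, which is precisely $[\omega]=0$.

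I expect the main obstacle to be the careful justification of the global adjugate identity: one must verify that the purely local Cramer relations are independent of the chosen frame and glue to an equality of honest global sections, and keep track of signs so that the defect term is exactly $\omega\otimes d\epsilon$ and not some twisted variant. Once that identity is available, the remaining steps are formal — the $\Hom$–$\mathrm{Ext}$ reading of $\xi_{D_W}=0$ and the final use of $h^0(X,\mathcal{O}_X(D_W))=1$ — and, in contrast to the proof of Theorem \ref{teoremaaggiunta}, the argument needs neither $\mathcal{E}^{(n)}$ nor diagram (\ref{diagramma2}).
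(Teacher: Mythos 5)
Your argument is correct, and it reaches the conclusion by a genuinely different and shorter route than the paper. The paper's proof splits $(\omega)_0=D_W+C$, passes to $\bigwedge^n\mathcal{E}$ and the auxiliary extension $\mathcal{E}^{(n)}$ of diagram (\ref{diagramma2}), constructs a section $\Omega'\in H^0(X,\bigwedge^n\mathcal{E}(-C))$ lifting $d$ by means of a \v{C}ech cocycle computation identifying a coboundary $\delta(\bar{\mu})$ with $\xi_{D_W}$, and only then concludes through the dualized diagram (\ref{analogo}). You replace that entire first step by the one-line homological reading of $\xi_{D_W}=0$: since $\delta(d)=d_*\xi=\xi_{D_W}$ in the $\Hom(-,\mathcal{O}_X(D_W))$ sequence of (\ref{unouno}), the vanishing is equivalent to the existence of a retraction $r\colon\mathcal{E}\to\mathcal{O}_X(D_W)$ with $r(d\epsilon)=d$, and you then contract your adjugate identity with $r$. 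Two remarks on how this sits relative to the paper. First, your ``global adjugate identity'' is not really a new obstacle: both sides are honest global sections of the locally free sheaf $\det\mathcal{F}\otimes\mathcal{E}$, so equality may be checked frame by frame, where it is exactly Laplace expansion along the last column plus the repeated-column trick; the same computation appears in the paper inside the verification that the first square of (\ref{diagramma3}) commutes (the identity $\beta(\alpha(f))=(f_U c_U)\,d\epsilon$). Second, the final use of $h^0(X,\mathcal{O}_X(D_W))=1$ — forcing $r(s_j)=a_j d$ and hence $\omega=\sum_j \epsilon_j a_j\omega_j$ — plays precisely the role of the paper's expansion of $\overline{\beta^\vee}(\Omega')$ in the basis $e_i\otimes d$. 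What your route buys is the elimination of the divisor $C$, of $\bigwedge^n\mathcal{E}$ and $\mathcal{E}^{(n)}$, and of the cocycle computation, working throughout on the rank-$(n+1)$ bundle $\mathcal{E}$ itself; the trade-off is only that one must trust the abstract identification of the connecting map $\delta$ with pushout along $d$ (up to an irrelevant sign), which is standard.
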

\begin{proof}
If $n=m=1$ then \cite{CP}[Theorem 1.1.8.] shows that $\xi_{D_W}=0$ iff $[\omega]=0$.

Let $(\omega)_0=D_W+C$ be the decomposition of the adjoint divisor in 
its fixed component and in its moving one. The first step is to construct a global section 
\begin{equation*}
\Omega'\in H^0(X,\bigwedge^n\mathcal{E}(-C))
\end{equation*} which restricts, through the natural map
$
\bigwedge^n\mathcal{E}(-C)\to\det\mathcal{F}(-C),
$ to $d\in H^0(X,\det\mathcal{F}(-C))$, where $(d)_0=D_W$. Indeed consider the commutative diagram:
\begin{equation*}
\xymatrix { 
&0\ar[d]&0\ar[d]&0\ar[d]\\
0\ar[r]&\bigwedge^{n-1}\mathcal{F}(-C)\ar[r]\ar[d]&\bigwedge^{n}\mathcal{E}(-C)\ar[r]^-{G_2}\ar[d]^-{G_1}&\det\mathcal{F}(-C)\ar[r]\ar[d]&0\\
0\ar[r]&\bigwedge^{n-1}\mathcal{F}\ar[r]^-{d\epsilon}\ar[d]^-{H_1}&\bigwedge^{n}\mathcal{E}\ar[r]^-{\rho_n}\ar[d]&\det\mathcal{F}\ar[r]\ar[d]&0\\
0\ar[r]&\bigwedge^{n-1}\mathcal{F}\otimes_{\mathcal{O}_X}\mathcal{O}_C\ar[r]^-{H_2}\ar[d]&\bigwedge^{n}\mathcal{E}\otimes_{\mathcal{O}_X}\mathcal{O}_C\ar[r]^-{H_3}\ar[d]&\det\mathcal{F}\otimes_{\mathcal{O}_X}\mathcal{O}_C\ar[r]\ar[d]&0\\
&0&0&0
}
\end{equation*}
The adjoint form $\omega\in H^0(X,\det\mathcal{F})$ vanishes when 
restricted to $C$. Since $\xi_{D_{W}}=0$ there exists a lifting 
$\Omega\in H^0(X,\bigwedge^{n}\mathcal{E})$ of $\omega$. Indeed, by (\ref{diagramma2}), take a global lifting $c\in H^0(X, \det\sF(-D_W))$ of $\omega$. Since $\xi_{D_W}=0$, $c$ can be lifted to a section $e\in H^0(X, \sE^{(n)})$. Define $\Omega:=\psi(e)$. By commutativity,
$
H_3(\Omega|_C)=0.
$ Hence there exists $\bar{\mu}\in H^0(X,\bigwedge^{n-1}\mathcal{F}\otimes_{\mathcal{O}_X}\mathcal{O}_C)$ such that
$
H_2(\bar{\mu})=\Omega|_C
$. To construct $\Omega'$ we first show that $\delta(\bar{\mu})=\xi_{D_{W}}$ where
\begin{equation*}
\delta\colon 
H^0(X,\bigwedge^{n-1}\mathcal{F}\otimes_{\mathcal{O}_X}\mathcal{O}_C)\to H^1(X,\bigwedge^{n-1}\mathcal{F}(-C)).
\end{equation*} 
To do that we consider the cocycle $\{\xi_{\alpha\beta}\}$ which gives 
$\xi$ with respect to an open cover $\{U_\alpha\}$ of $X$. For any 
$\alpha$ there 
exists  $\gamma_\alpha\in 
\bigwedge^{n-1}\mathcal{F}(U_\alpha)$ such that:
\begin{equation}
\label{Omega}
\Omega|_{U_\alpha}=\omega_\alpha+\gamma_\alpha\wedge d\epsilon
\end{equation} where $\omega_\alpha$ is obtained by the analogous 
expression to the one in equation (\ref{sezloc}) but on 
$\bigwedge^{n}\sE$. A local computation on $U_\alpha\cap U_\beta$ shows that
$
\omega_\beta-\omega_\alpha=\xi_{\alpha\beta}(\omega)\wedge d\epsilon,
$ where the notation $\xi_{\alpha\beta}(\omega)$ indicates the 
natural contraction. Since we have a global lifting of $\omega$,
this cocycle must be a coboundary. Hence by (\ref{Omega})
\begin{equation*}
(\gamma_\beta-\gamma_\alpha)\wedge d\epsilon=\omega_\beta-\omega_\alpha=\xi_{\alpha\beta}(\omega)\wedge d\epsilon,
\end{equation*} that is
$
\gamma_\beta-\gamma_\alpha=\xi_{\alpha\beta}(\omega)
$. Note that the sections $\gamma_\alpha$ are local liftings of 
$\bar{\mu}$. Indeed by the injectivity of $H_2$, it is enough to show that
$
(\gamma_\alpha\wedge d\epsilon)|_C=\Omega|_C,
$ and this is obvious by (\ref{Omega}) and by the fact that 
$
\omega_\alpha|_C=0
$. Since the morphism
\begin{equation*}
\bigwedge^{n-1}\mathcal{F}(-C)(U_\alpha\cap U_\beta)\to \bigwedge^{n-1}\mathcal{F}(U_\alpha\cap U_\beta)
\end{equation*} is given by multiplication by $c_{\alpha\beta}$, where $c_{\alpha\beta}$ is a local equation of $C$
on $U_\alpha\cap U_\beta$, then the desired expression of the coboundary $\delta(\bar{\mu})$ is given by the cocycle
\begin{equation*}
\frac{\gamma_\beta-\gamma_\alpha}{c_{\alpha\beta}}=\frac{\xi_{\alpha\beta}(\omega)}{c_{\alpha\beta}}.
\end{equation*}
By the above local computation, we see that this cocycle gives 
$\xi_{D_W}$ via the isomorphism
\begin{equation}
\label{isom3}
H^1(X,\bigwedge^{n-1}\mathcal{F}(-C))\cong H^1(X,\mathcal{F}^\vee(D_W)).
\end{equation}
This is easy to prove since $\xi_{D_W}$ is locally given by
$
\xi_{\alpha\beta}\cdot d_{\alpha\beta},
$ where $d_{\alpha\beta}$ is the equation of $D_W$ on $U_{\alpha}\cap 
U_{\beta}$, so the image of $\xi_{D_W}$ through the isomorphism 
(\ref{isom3}) is
$
\frac{\xi_{\alpha\beta}\cdot d_{\alpha\beta}\cdot c_{\alpha\beta}}{c_{\alpha\beta}}=\frac{\xi_{\alpha\beta}(\omega)}{c_{\alpha\beta}},
$ which is exactly $\delta(\bar{\mu})$.
Now we use again our hypothesis $\xi_{D_W}=0$ to write 
$\delta(\bar{\mu})=0$. Then there exists a global section $\mu \in 
H^0(X,\bigwedge^{n-1}\mathcal{F})$ which is a lifting of $\bar{\mu}$. 
This gives the global section
\begin{equation*}
\tilde{\Omega}:=\Omega-\mu\wedge d\epsilon\in H^0(X,\bigwedge^{n}\mathcal{E}).
\end{equation*} By construction $\tilde{\Omega}$ is a new lifting of 
$\omega$ which now vanishes once restricted to C:
\begin{equation*}
\tilde{\Omega}|_C=\Omega|_C-\mu\wedge d\epsilon|_C=\Omega|_C-H_2(\bar{\mu})=0.
\end{equation*} The wanted section
$
\Omega'\in H^0(X,\bigwedge^{n}\mathcal{E}(-C))
$ is the global section which lifts $\tilde{\Omega}$ and by 
construction satisfies
$
\rho_n(G_1(\Omega'))=\omega
$ and
$
G_2(\Omega')=d.
$

The second step is to use $\Omega'$ to show that $[\omega]=0$. The global sections
$
\omega_i:=\lambda^n(\eta_1\wedge\ldots\wedge\widehat{\eta_i}\wedge\ldots\wedge\eta_{n+1})\in H^0(X,\det\mathcal{F})
$ generate $\lambda^nW$ and by definition they vanish on $D_W$, that is there exist global sections $\tilde{\omega}_i\in H^0(X,\det\mathcal{F}(-D_W))$ such that
$
\omega_i=\tilde{\omega}_i\cdot d
$. We consider now the commutative diagram
\begin{equation}
\label{diagramma3}
\xymatrix { 0\ar[r] &\mathcal{O}_X(-C)\ar[r]^-\alpha\ar[d]^-{\cdot C}& W\otimes\mathcal{O}_X\ar[r]^-\gamma\ar[d]^-\beta & \bar{\mathcal{F}}\ar[r]\ar[d]^-\iota&0\\
0\ar[r]&\mathcal{O}_X\ar[r]^-{d\epsilon}&\mathcal{E}\ar[r]^{\rho_1}&\mathcal{F}\ar[r]&0.
}
\end{equation} The homomorphism $\beta$ is locally defined by
\begin{equation*}
(f_1,\dots,f_{n+1})\mapsto (-1)^nf_1\cdot s_1+\dots+f_{n+1}\cdot s_{n+1}
\end{equation*} and similarly $\rho_1\circ\beta$ is given by
\begin{equation*}
(f_1,\dots,f_{n+1})\mapsto (-1)^nf_1\cdot \eta_1+\dots+f_{n+1}\cdot \eta_{n+1}.
\end{equation*}
The homomorphism $\iota\circ\gamma$ is the factorization of this map and 
it defines $\bar{\mathcal{F}}$. The homomorphism $\alpha$ is defined in the following way: if $f\in\mathcal{O}_X(-C)(U)$
is a local section, then $\tilde{\omega}_i$ are sections of the dual sheaf $\mathcal{O}_X(C)$ and, locally on $U$, $\alpha$ is given by
\begin{equation*}
f\mapsto (\tilde{\omega}_1(f),\ldots,\tilde{\omega}_{n+1}(f)).
\end{equation*}
We want to verify that the first square is commutative. If we write 
locally $\eta_i=\sum_{j=1}^n b_j^i\cdot \sigma_j$, and
\begin{equation*}
s_i=\sum_{j=1}^n b_j^i\cdot \sigma_j+c^i\cdot d\epsilon,
\end{equation*} then 
\begin{align*}
&\beta(\alpha(f))=\beta\left(f_U\begin{vmatrix}
	b^2_1&\dots&b^2_n\\
	\vdots &  \ddots & \vdots  \\
	b^{n+1}_1&\dots&b^{n+1}_n
\end{vmatrix}\frac{1}{d_U},\ldots,f_U\begin{vmatrix}
	b^1_1&\dots&b^1_n\\
	\vdots &  \ddots & \vdots \\
	  b^{n}_1&\dots&b^n_n
\end{vmatrix}\frac{1}{d_U} \right)=\\&=(-1)^nf_U\begin{vmatrix}
	b^2_1&\dots&b^2_n\\
	\vdots &  \ddots & \vdots  \\
	b^{n+1}_1&\dots&b^{n+1}_n
\end{vmatrix}\frac{1}{d_U}s_1+\cdots +f_U\begin{vmatrix}
	b^1_1&\dots&b^1_n\\
	\vdots &  \ddots & \vdots \\
	  b^{n}_1&\dots&b^n_n
\end{vmatrix}\frac{1}{d_U}s_{n+1}=\\
&=f_U\begin{vmatrix}
	b^1_1&\dots&b^1_n&c^1\\
	b^2_1&\dots&b^2_n&c^2\\
	\vdots &  \ddots & \vdots  & \vdots \\
	b^{n+1}_1&\dots&b^{n+1}_n&c^{n+1}
\end{vmatrix}\frac{1}{d_U}d\epsilon=\\&=(f_U\cdot c_U)d\epsilon,
\end{align*} where $f_U$ and $d_U$ are local holomorphic functions 
which represent 
the sections $f$ and $d$ respectively. 
The first equality uses the fact that the determinants appearing
in the first line are the local equations of the sections $\omega_i$
(see also the proof of Theorem \ref{teoremaaggiunta}); 
the last equality comes from the fact that the determinant in the 
second to last line is the local equation of $\omega$, and 
$\omega=d\cdot c$. To dualize diagram (\ref{diagramma3}) we recall 
the sheaves isomorphisms $\mathcal{F}^\vee\cong \bigwedge^{n-1}\mathcal{F}(-C-D_W)$ and 
$\mathcal{E}^\vee\cong \bigwedge^{n}\mathcal{E}(-C-D_W)$. Moreover we 
also recall the isomorphism $W^\vee\cong\bigwedge^{n}W$,  
given by
\begin{equation*}
\eta^i\mapsto \eta_1\wedge\ldots\wedge\widehat{\eta_i}\wedge\ldots\wedge\eta_{n+1}
\end{equation*} where $\eta^1,\ldots,\eta^{n+1}$ is the basis of $W^\vee$ dual to the basis $\eta_1,\ldots,\eta_{n+1}$ of $W$. Define
\begin{equation*}
e_i:=\eta_1\wedge\ldots\wedge\widehat{\eta_i}\wedge\ldots\wedge\eta_{n+1}.
\end{equation*}
Now we dualize (\ref{diagramma3}):
\begin{equation}\label{analogo}
\xymatrix { 0\ar[r]&\bar{\mathcal{F}}^\vee \ar[r]^-{\gamma^\vee}& \bigwedge^{n}W\otimes\mathcal{O}_X\ar[r]^-{\alpha^\vee} & \det\mathcal{F}(-D_W)\\
0\ar[r]&\bigwedge^{n-1}\mathcal{F}(-C-D_W)\ar[r]\ar[u]&\bigwedge^{n}\mathcal{E}(-C-D_W)\ar[r]\ar[u]^{\beta^\vee}&\mathcal{O}_X\ar[u]^-{\cdot C}\ar[r]&0.
}
\end{equation}
Here $\alpha^\vee$ is the evaluation map given by the global sections 
$\tilde{\omega}_i$, not necessarily surjective. Nevertheless we 
tensor by $\mathcal{O}_X(D_W)$ and we obtain
\begin{equation*}
\xymatrix {  \bigwedge^{n}W\otimes H^0(X,\mathcal{O}_X(D_W))\ar[r]^-{\overline{\alpha^\vee}} & H^0(X,\det\mathcal{F})\\
H^0(X,\bigwedge^{n}\mathcal{E}(-C))\ar[r]\ar[u]^{\overline{\beta^\vee}}&H^0(X,\mathcal{O}_X(D_W)).\ar[u]^-{\cdot C}
}
\end{equation*}
The section $\Omega'\in H^0(X,\bigwedge^{n}\mathcal{E}(-C))$ 
constructed in the first part of the proof gives in 
$H^0(X,\det\mathcal{F})$ the adjoint $\omega$. By commutativity
\begin{equation*}
\omega=\overline{\alpha^\vee}(\overline{\beta^\vee}(\Omega')).
\end{equation*} By our hypothesis $h^0(X,\mathcal{O}_X(D_W))=1$, the section $d$ is a basis of $H^0(X,\mathcal{O}_X(D_W))$, so $(e_1\otimes d,\ldots,e_{n+1}\otimes d)$ is a basis of $\bigwedge^{n}W\otimes H^0(X,\mathcal{O}_X(D_W))$. We have then
\begin{equation*}
\overline{\beta^\vee}(\Omega')=\sum_{i=1}^{n+1}c_i\cdot e_i\otimes d
\end{equation*} where $c_i\in \mathbb{C}$ and
\begin{equation*}
\omega=\overline{\alpha^\vee}(\overline{\beta^\vee}(\Omega'))=\overline{\alpha^\vee}(\sum_{i=1}^{n+1}c_i\cdot e_i\otimes d)=\sum_{i=1}^{n+1}c_i\cdot\tilde{\omega}_i\cdot d=\sum_{i=1}^{n+1} c_i\cdot\omega_i,
\end{equation*} and hence $[\omega]=0$.
\end{proof}

\section{An infinitesimal Torelli-type theorem}

Theorem \ref{teoremaaggiunta} can be used to show splitting criteria 
for extension classes of %{\it{generically globally generated}}
locally free sheaves.
To this aim we consider  a locally free sheaf $\sF$ of rank $n$ 
over an $m$-dimensional smooth variety $X$. %We remind the
%reader that generically globally generated means that $\sF$ is 
%generated by the global sections outside a loci of codimension at 
%least $1$.
Naturally associated to $\sF$ there is the invertible sheaf 
$\det\sF$ and  the natural homomorphism:
\begin{equation}\label{immaginelmbdan}
\lambda^n\colon \bigwedge^n  H^0(X,\sF)\to  H^0(X,\det\sF).
\end{equation}
We denote by  $\lambda^{n}H^0(X,\sF)$ its image. Consider the linear system
$\mP(\lambda^{n}H^0(X,\sF))$ and recall that $D_\sF$ is its fixed 
component and $|M_{\sF}|$ is its associated mobile linear system. %By \cite[Proposition 3.1.6]{PZ}, if $W$
%is a generic $n+1$-dimensional subspace of $H^0(X,\sF)$, then $D_{\sF}=D_W$.
Moreover we denote by $|\det\sF|$ the linear system 
associated to $\det\sF$ and by $D_{\det\sF}$, $M_{\det\sF}$ respectively 
its fixed and its movable part; that is: 
$|\det\sF|=D_{\det\sF}+|M_{\det\sF}|$. Finally note 
that $D_{\det\sF}$ is a sub-divisor of $D_\sF$.

\begin{defn}

    An \emph{adjoint quadric} for $\omega$ is a quadric in $\mP(H^0(X,\det\sF)^{\vee})$ of the form
    $$\omega^2=\sum L_i\cdot\omega_i,$$
    where $\omega$ is a $\xi$-adjoint of $W\subset  H^0(X,\sF)$, 
    $\omega_i:=\lambda^n(\eta_1\wedge\ldots\wedge\widehat{\eta_i}\wedge\ldots\wedge\eta_{n+1})\in H^0(X,\det\sF)$ and $L_i\in H^0(X,\det\sF)$.
\end{defn}

Obviously an adjoint quadric has rank less then or equal to $2n+3$, and, if it
exists, it is 
constructed by the extension class $\xi$.

\begin{thm}\label{torelloo1}
    Let $X$ be an $m$-dimensional smooth variety. 
    Let $\sF$ be a %generically globally generated
		locally free sheaf of rank 
    $n$ such that $h^{0}(X, \sF)\geq n+1$, let $\xi\in 
    H^1(X,\sF^\vee)$ and let $Y$ be the schematic 
    image of $\phi_{|M_{\det\sF} |}\colon X\dashrightarrow 
    \mP(H^{0}(X,\det\sF)^{\vee})$. 
    If $\xi$ is such that 
    $\partial_{\xi}^{n}(\omega)=0$, where $\omega$ is an adjoint form associated
    to an $n+1$-dimensional subspace $W\subset\Ker\partial_\xi\subset 
    H^0(X,\sF)$, then $[\omega]=0$, providing that there are no 
    $\omega$-adjoint quadrics.
\end{thm}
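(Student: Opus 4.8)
The plan is to turn the liftability hypothesis $\partial_\xi^n(\omega)=0$ into an \emph{explicit} adjoint quadric for $\omega$, and then to deduce $[\omega]=0$ from the assumption that no such quadric exists, via a primeness argument in the symmetric algebra of $H^0(X,\det\sF)$.

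First I would lift $\omega$: since $\partial_\xi^n(\omega)=0$, there is a global section $\Omega\in H^0(X,\bigwedge^n\sE)$ with $\rho_n(\Omega)=\omega$. Together with the liftings $s_1,\dots,s_{n+1}\in H^0(X,\sE)$ of $\eta_1,\dots,\eta_{n+1}$ that define $\omega$, this is all the data needed. For each $i$ I would set $L_i:=H^n_{d\epsilon}(s_i\wedge\Omega)\in H^0(X,\det\sF)$, using that $s_i\wedge\Omega\in H^0(X,\bigwedge^{n+1}\sE)=H^0(X,\det\sE)$ and that $H^n_{d\epsilon}$ is the isomorphism $\det\sE\cong\det\sF$ inverse to (\ref{isom}).

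The key step, and the one I expect to be the main obstacle, is the identity
\[
\omega^2=\sum_{i=1}^{n+1}(-1)^{i+1}L_i\,\omega_i\qquad\text{in }H^0(X,(\det\sF)^{\otimes 2}),
\]
which exhibits $\omega^2=\sum L_i\omega_i$ as an $\omega$-adjoint quadric. I would establish it by a local computation in a frame $(\sigma_1,\dots,\sigma_n,d\epsilon)$ of $\sE$. Writing $s_i=\sum_j b^i_j\sigma_j+c^i d\epsilon$ and $\Omega=\omega\,\sigma_{[n]}+\sum_k p_k\,\sigma_{[n]\setminus k}\wedge d\epsilon$, where $\sigma_{[n]}=\sigma_1\wedge\cdots\wedge\sigma_n$ and $\sigma_{[n]\setminus k}$ omits $\sigma_k$, an expansion of $s_i\wedge\Omega$ shows that the local expression of $L_i$ is $\sum_k(-1)^{k-1}b^i_k p_k+(-1)^n c^i\,\omega$, while the local expressions of the $\omega_i$ are the maximal minors $M_i$ of $(b^i_j)$ (exactly as in the proof of Theorem \ref{teoremaaggiunta}). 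Substituting, the terms carrying the $p_k$, which encode the ambiguity of the chosen lift $\Omega$, cancel: they are governed by $\sum_i(-1)^i b^i_k M_i$, the Laplace expansion of a determinant with a repeated column, hence zero. The surviving $c^i$-terms reassemble, via the Laplace expansion of $\omega=\det(b^i_j\mid c^i)$ along the last column, into $\omega^2$. Since both sides are global sections, the chartwise identity globalizes. This determinantal cancellation is where the sign bookkeeping lives, and it is precisely what makes the resulting quadric independent of the lift.

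Finally I would conclude. The displayed identity says exactly that $q:=\omega\cdot\omega-\sum_i(-1)^{i+1}L_i\cdot\omega_i$ lies in the kernel of the multiplication map $\mathrm{Sym}^2 H^0(X,\det\sF)\to H^0(X,(\det\sF)^{\otimes 2})$, i.e. that $q$ is a quadric vanishing on the schematic image $Y$. Were $q\neq 0$, it would be an $\omega$-adjoint quadric containing $Y$, contradicting the hypothesis; hence $q=0$ already in $\mathrm{Sym}^2 H^0(X,\det\sF)$. This means $\omega^2$ belongs to the ideal generated by the linear forms $\omega_1,\dots,\omega_{n+1}$ of the symmetric algebra $\mathrm{Sym}^\bullet H^0(X,\det\sF)$. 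That ideal is the ideal of a linear subspace, hence prime, so $\omega^2\in(\omega_1,\dots,\omega_{n+1})$ forces $\omega\in(\omega_1,\dots,\omega_{n+1})$; being of degree one, $\omega\in\langle\omega_1,\dots,\omega_{n+1}\rangle=\lambda^n W$, which is precisely $[\omega]=0$.
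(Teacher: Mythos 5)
Your proof is correct, but it reaches the key quadratic relation by a genuinely different route than the paper. The paper obtains the coefficients $L_i$ abstractly: it fits $\xi$ into the evaluation sequence $0\to\sK\to\bigwedge^{n}W\otimes\sO_X\to\det\sF(-D_W)\otimes\sI_{Z_W}\to0$, transports the hypothesis $\partial_\xi^n(\omega)=\xi\cup\omega=0$ to the vanishing of $\xi''\cdot\omega$ in $H^1(X,\sK\otimes\det\sF)$, and concludes that $\sigma\cdot\omega$ lifts through $\bigwedge^nW\otimes\det\sF$, which is exactly the statement that $\omega^2=\sum L_i\omega_i$ for \emph{some} $L_i\in H^0(X,\det\sF)$. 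You instead take a lift $\Omega\in H^0(X,\bigwedge^n\sE)$ of $\omega$ (this is where $\partial_\xi^n(\omega)=0$ enters for you) and produce the coefficients explicitly as $L_i=H^n_{d\epsilon}(s_i\wedge\Omega)$; your local Laplace-expansion computation is right, including the signs (the $p_k$-terms cancel as expansions of a determinant with a repeated column, and the $c^i$-terms give $(-1)^n\omega\cdot(-1)^n\omega=\omega^2$ --- I checked this also for $n=1$), and the independence of the choice of $\Omega$ drops out of the cancellation. What your version buys is an explicit formula for the adjoint quadric, usable in examples, and no diagram chase; what the paper's version buys is a coordinate-free argument that isolates precisely which cohomological vanishing is responsible. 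Your closing step is also slightly more careful than the paper's: the paper merely asserts that $\omega\notin\lambda^nW$ yields an adjoint quadric, whereas your primeness argument for the ideal $(\omega_1,\dots,\omega_{n+1})$ in $\mathrm{Sym}^\bullet H^0(X,\det\sF)$ actually justifies why the element $\omega^2-\sum L_i\omega_i$ is nonzero in $\mathrm{Sym}^2H^0(X,\det\sF)$ in that case, i.e.\ why it is a genuine quadric through $Y$.
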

\begin{proof}
Let $\sB=\{\eta_{1},\ldots, \eta_{n+1}\}$ be 
a basis of $W$. Set 
$\omega_i$ for $i=1,\dots,n+1$ as above and denote by $\tilde{\omega}_i\in 
H^0(\det\sF(-D_W)\otimes \sI_{Z_W})$ the corresponding sections 
via $0\to H^0(X, \det\sF(-D_W)\otimes \sI_{Z_W})\to H^0(X,\det\sF)$.
Recall that $\lambda^nW:=\left\langle 
\omega_1,\dots,\omega_{n+1}\right\rangle\subset 
H^0(X,\det\mathcal{F})$ is the vector space  generated by the sections $\omega_i$. The standard evaluation
     map $\bigwedge^{n}W\otimes\sO_{X}\to 
     \det\sF(-D_W)\otimes \sI_{Z_{W}}$ given by 
     $\tilde{\omega}_1,\dots,\tilde{\omega}_{n+1}$ results in the following exact sequence
     
\begin{equation}\label{genera1}
\xymatrix { 0\ar[r]&\sK \ar[r]&
\bigwedge^{n}W\otimes\mathcal{O}_X\ar[r] & \det\mathcal{F}(-D_W)\otimes \sI_{Z_W}\ar[r]&0}
\end{equation} which is associated to the class $\xi'\in \text{Ext}^1(\det\sF(-D_W)\otimes \sI_{Z_W},\sK)$.
This sequence fits into the following commutative diagram (cf. (\ref{analogo}))
\begin{equation}
\xymatrix { 0\ar[r]&\sK \ar[r]& \bigwedge^{n}W\otimes\mathcal{O}_X\ar[r] & \det\mathcal{F}(-D_W)\otimes \sI_{Z_W}\ar[r]&0\\
0\ar[r]&\sF^\vee\ar[r]\ar[u]&\sE^\vee\ar[r]\ar[u]^{f}&\mathcal{O}_X\ar[u]^{g}\ar[r]&0,
}
\end{equation} where $f$ is the map given by the contraction by the sections $(-1)^{n+1-i}s_i$, for $i=1,\dots,n+1$, and $g$ is given by the global section $\sigma \in H^0(X,\det\sF(-D_W)\otimes \sI_{Z_W})$ corresponding to the adjoint $\omega$.
We have the standard factorization
\begin{equation}
\xymatrix { 0\ar[r]&\sK \ar[r]& \bigwedge^{n}W\otimes\mathcal{O}_X\ar[r] & \det\mathcal{F}(-D_W)\otimes \sI_{Z_W}\ar[r]&0\\
0\ar[r]&\sK \ar[r]\ar@{=}[u]&\sL\ar[r]\ar[u]&\sO_X\ar[u]^{g}\ar@{=}[d]\ar[r]&0\\
0\ar[r]&\sF^\vee\ar[r]\ar[u]&\sE^\vee\ar[r]\ar[u]&\mathcal{O}_X\ar[r]&0
}
\end{equation} where the sequence in the middle is associated to the class
$\xi''\in H^1(X,\sK)$ which is the image of $\xi\in H^1(X,\sF^\vee)$ through the map $H^1(X,\sF^\vee)\to H^1(X,\sK)$.
In particular we obtain the commutative square
\begin{equation}
\xymatrix {H^0(X,\det \sF(-D_W)\otimes \sI_{Z_W})\ar[r]&H^1(X,\sK)\ar@{=}[d]\\
H^0(X,\sO_X)\ar[u]\ar[r]& H^1(X,\sK).
}
\end{equation} By commutativity we immediately have that the image of $\sigma$ through
the coboundary map $H^0(X,\det \sF(-D_W)\otimes \sI_{Z_W})\to H^1(X,\sK)$ is $\xi''$.
Tensoring by $\det \sF$, the map $\sF^\vee\to\sK$ gives 
\begin{equation}
\xymatrix { \sF^\vee\otimes\det\sF \ar@{=}[d]\ar[r] &\sK\otimes\det\sF\\
\bigwedge^{n-1}\sF\ar[ur]^{\Gamma}
}
\end{equation} and, since $\xi\cdot\omega\in 
H^1(X,\sF^\vee\otimes\det\sF)$ is sent to $\xi''\cdot\omega\in H^1(X,\sK\otimes\det\sF)$, we have that
\begin{equation}
H^{1}(\Gamma)(\xi\cup \omega)=\xi''\cdot\omega,
\end{equation} where $\xi\cup \omega$ is the cup product.

By hypothesis $\partial_\xi^n(\omega)=\xi\cup \omega=0\in H^1(X,\bigwedge^{n-1}\sF)$, so also 
$\xi''\cdot \omega=0\in H^1(X,\sK\otimes\det\sF)$, hence the global section $\sigma\cdot\omega\in H^0(X,\det \sF(-D_W)\otimes \sI_{Z_W}\otimes\det\sF)$ is in the kernel of the coboundary map $H^{0}(X,\det \sF(-D_W)\otimes \sI_{Z_W}\otimes\det\sF)\to H^{1}(X, 
\sK\otimes\det\sF)$ associated to the sequence

\begin{equation}
\xymatrix { 0\ar[r]&\sK\otimes\det\sF \ar[r]&
\bigwedge^{n}W\otimes\det\sF\ar[r] & \det\mathcal{F}(-D_W)\otimes \sI_{Z_W}\otimes\det\sF\ar[r]&0.}
\end{equation}
This occurs iff there exist 
$L^{\sigma}_{i}\in 
H^{0}(X,\det\sF)$, $i=1,\ldots, n+1$ such that
\begin{equation}
\sigma\cdot\omega=\sum_{i=1}^{n+1}L^{\sigma}_{i}\cdot\tilde{\omega}_{i}.   
\end{equation}
This relation gives the following relation in $H^{0}(X,\det
\sF^{\otimes2})$:
\begin{equation}\label{quadra}
\omega\cdot\omega=\sum_{i=1}^{n+1}L^{\sigma}_{i}\cdot\omega_{i}.
\end{equation}
Assume now that the adjoint form $\omega$ is not in the vector space 
$\lambda^nW$. Then the equation (\ref{quadra}) gives an adjoint 
quadric. By contradiction the claim follows.
\end{proof}

\begin{cor}
\label{torelloooo} Under the above hypotheses, $\xi$ is a supported 
deformation on $D_{W}$; that is, $\xi_{D_{W}}$ is trivial.
\end{cor}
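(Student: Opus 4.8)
The plan is to obtain the corollary as the direct concatenation of the two results just proved. Theorem \ref{torelloo1} asserts exactly that, under the stated hypotheses---namely $\partial_\xi^n(\omega)=0$ for the adjoint form $\omega$ attached to an $(n+1)$-dimensional subspace $W\subset\Ker\partial_\xi\subset H^0(X,\sF)$, together with the absence of $\omega$-adjoint quadrics through the schematic image $Y$---the adjoint image $[\omega]$ vanishes. I would take this vanishing as the input to the argument.

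Once $[\omega]=0$ is available, I would invoke the Adjoint Theorem, Theorem \ref{teoremaaggiunta}. Its hypotheses are precisely $W\subset\Ker(\partial_\xi^1)$ and $[\omega]=0$, and its conclusion is that $\xi$ lies in $\Ker\big(H^1(X,\sF^\vee)\to H^1(X,\sF^\vee(D_W))\big)$. Under the standard identifications $\text{Ext}^1(\sF,\sO_X)\cong H^1(X,\sF^\vee)$ and $\text{Ext}^1(\sF(-D_W),\sO_X)\cong H^1(X,\sF^\vee(D_W))$, this kernel condition is by definition the statement that $\xi$ is a deformation supported on $D_W$, that is, $\xi_{D_W}=0$. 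This is exactly the assertion of the corollary.

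The only point deserving a word is the compatibility of the hypotheses of the two theorems, since they must refer to the same subspace $W$ and the same adjoint data. This is automatic here: the subspace $W\subset\Ker\partial_\xi=\Ker(\partial_\xi^1)$ fixed in Theorem \ref{torelloo1} is exactly of the type the Adjoint Theorem requires, and by Remark \ref{aggiuntazero} the vanishing of $[\omega]$ does not depend on the chosen basis $\sB$ of $W$, so no incompatibility can arise. Consequently there is no genuine obstacle to overcome: the whole substance of the argument lies already in Theorem \ref{torelloo1} and Theorem \ref{teoremaaggiunta}, and the corollary follows simply by chaining them, with $D_W$ the fixed divisor of $|\lambda^n W|$ in both statements.
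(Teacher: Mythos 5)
Your argument is correct and is exactly the paper's own proof: Theorem \ref{torelloo1} gives $[\omega]=0$, and the Adjoint Theorem \ref{teoremaaggiunta} then yields $\xi\in\Ker\bigl(H^1(X,\sF^\vee)\to H^1(X,\sF^\vee(D_W))\bigr)$, i.e.\ $\xi_{D_W}=0$. The compatibility check on $W$ and the basis-independence via Remark \ref{aggiuntazero} is a sensible addition but raises no issue the paper does not already implicitly handle.
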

\begin{proof}
The claim follows by Theorem 
\ref{torelloo1} and by Theorem \ref{teoremaaggiunta}. 
\end{proof}

The following is Theorem B of the introduction:
\begin{cor}
\label{torelloo}
Under the hypotheses of Theorem \ref{torelloo1}, if we further assume that $W$ is generic, it follows that 
$\xi$ is a deformation supported on $D_{\sF}$, that is 
$\xi_{D_{\sF}}$ is trivial.
\end{cor}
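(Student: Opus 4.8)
The plan is to deduce Corollary \ref{torelloo} from Corollary \ref{torelloooo} by upgrading the conclusion \lq\lq$\xi_{D_W}=0$\rq\rq\ (supported on $D_W$) to the stronger \lq\lq$\xi_{D_\sF}=0$\rq\rq\ (supported on $D_\sF$), using the genericity of $W$. Recall that $D_W$ is the fixed part of the sublinear system $|\lambda^n W|$ cut out by a single $(n+1)$-dimensional subspace, whereas $D_\sF$ is the fixed part of the full system $\mP(\lambda^n H^0(X,\sF))$. Since $\lambda^n W\subset \lambda^n H^0(X,\sF)$, every member of $|\lambda^n W|$ is a member of the full system, so the fixed part can only grow when we pass to the subsystem; that is, $D_\sF\le D_W$ for every choice of $W$. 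The content of the genericity hypothesis is that for a \emph{generic} $W$ these two divisors actually agree, $D_W=D_\sF$.

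First I would establish this equality $D_W=D_\sF$ for generic $W$. The inequality $D_\sF\le D_W$ is automatic as noted. For the reverse, I would argue that a prime divisor $D$ lying in the fixed locus of $|\lambda^n W|$ for \emph{every} $W$ must already be fixed in the full system: if $D$ is not a fixed component of $\mP(\lambda^n H^0(X,\sF))$, then there is some decomposable element $\eta_1\wedge\cdots\wedge\eta_n$ of $\bigwedge^n H^0(X,\sF)$ whose image $\lambda^n(\eta_1\wedge\cdots\wedge\eta_n)$ does not vanish on $D$; completing $\{\eta_1,\dots,\eta_n\}$ to an $(n+1)$-element set $\{\eta_1,\dots,\eta_{n+1}\}$ spanning an admissible $W\subset\Ker\partial_\xi$ produces a generator $\omega_{n+1}$ of $\lambda^n W$ not vanishing on $D$, so $D$ is not a fixed component of $|\lambda^n W|$ for that $W$. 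Hence for a general $W$ the fixed divisor $D_W$ contains no extra components beyond $D_\sF$, giving $D_W=D_\sF$. One must take care that the chosen $W$ can simultaneously be arranged to lie in $\Ker\partial_\xi^1$ (which has dimension $\ge n+1$ by the standing hypotheses, since $\partial_\xi=0$ in the situation where the corollary is applied), so that an adjoint form is defined; this is exactly where the word \lq\lq generic\rq\rq\ does its work, selecting $W$ in the dense open locus of the Grassmannian of $(n+1)$-planes in $\Ker\partial_\xi^1$ on which $D_W$ attains its minimal value $D_\sF$.

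With the equality $D_W=D_\sF$ in hand for generic $W$, the conclusion is immediate: Corollary \ref{torelloooo}, applied under the running hypotheses of Theorem \ref{torelloo1} (no $\omega$-adjoint quadrics through $Y$, and $\partial_\xi^n(\omega)=0$), yields that $\xi$ is supported on $D_W$, i.e.\ $\xi_{D_W}=0$. Substituting $D_W=D_\sF$ gives $\xi_{D_\sF}=0$, which is precisely the assertion that $\xi$ is a deformation supported on $D_\sF$.

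The main obstacle I anticipate is the first step, namely making rigorous and uniform the claim that a generic $(n+1)$-plane $W$ achieves $D_W=D_\sF$. The subtlety is that $W$ is constrained to lie in $\Ker\partial_\xi^1$ rather than in all of $H^0(X,\sF)$, so the decomposable elements used to separate a candidate fixed component $D$ must themselves come from this kernel; one must verify that $\Ker\partial_\xi^1$ is large enough (again $\dim\ge n+1$) and that its image under $\lambda^n$ already realizes the fixed divisor $D_\sF$ of the full image $\lambda^n H^0(X,\sF)$, or else reinterpret $D_\sF$ as the fixed divisor of $\lambda^n(\Ker\partial_\xi^1)$. Handling this compatibility, together with the standard but slightly delicate semicontinuity argument that the fixed divisor is minimal on a dense open subset of the relevant Grassmannian, is the technical heart of the proof; everything afterward is a formal invocation of the two preceding corollaries.
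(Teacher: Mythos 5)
Your proof is correct and follows essentially the same route as the paper: the paper's entire proof consists of the observation that for generic $W$ one has $D_W=D_\sF$ (quoted from \cite[Proposition 3.1.6]{PZ}) followed by an appeal to Corollary \ref{torelloooo}. The only difference is that you sketch a direct argument for the equality $D_W=D_\sF$ (decomposable elements realize the minimal vanishing order along each candidate fixed component, plus semicontinuity over the Grassmannian) where the paper simply cites \cite{PZ}, and that sketch, including your correct remark that genericity must be taken inside $\Ker\partial_\xi^1$, is sound.
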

\begin{proof}
By \cite[Proposition 3.1.6]{PZ}, if $W$
is a generic $n+1$-dimensional subspace of $H^0(X,\sF)$, we have that $D_{\sF}=D_W$.
Then the claim follows by Corollary \ref{torelloooo}.
\end{proof}

Note that there are cases where our hypothesis applies easily:

\begin{cor}\label{suifibratiuno}
Let $X$ be an $m$-dimensional smooth variety. 
Let $\sF$ be a %generically globally generated 
locally free sheaf of rank 
    $n$. Assume that $\phi_{|M_{\det\sF} |}\colon X\dashrightarrow
    \mP(H^{0}(X,\det\sF)^{\vee})$ is a non trivial rational map such 
    that its schematic image
    is a complete intersection of 
    hypersurfaces of degree 
    $>2$. Let $\xi\in H^1(X,\sF^\vee)$. If 
$\partial_\xi =0$ and $\partial_{\xi}^{n}(\omega)=0$ 
where $\omega$ is an adjoint form associated to a generic $n+1$-dimensional subspace $W\subset H^0(X,\sF)$, then $\xi$ is a 
deformation supported on $D_{\sF}$. In particular if $D_{\sF}=0$ then $\xi=0$.
\end{cor}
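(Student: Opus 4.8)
The plan is to deduce the statement from Corollary \ref{torelloo} (Theorem B), whose only nonformal hypothesis is the absence of $\omega$-adjoint quadrics. First I would check that the remaining hypotheses of Theorem \ref{torelloo1} hold automatically here. Since $\partial_\xi=0$ we have $\Ker\partial_\xi^1=H^0(X,\sF)$, so every generic $(n+1)$-dimensional subspace $W$ (which exists because the corollary presupposes such a $W$, i.e.\ $h^0(X,\sF)\geq n+1$) satisfies $W\subset\Ker\partial_\xi^1$ and thus admits an adjoint form $\omega$; by assumption $\partial_\xi^n(\omega)=0$. Hence the only thing left to establish, in order to apply Corollary \ref{torelloo} and conclude that $\xi$ is a deformation supported on $D_\sF$, is that no $\omega$-adjoint quadric exists.

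Next I would reinterpret adjoint quadrics as quadrics through the image. By its definition, recorded in equation (\ref{quadra}), an $\omega$-adjoint quadric is an element $\omega^2-\sum_i L_i\omega_i$ of $\mathrm{Sym}^2 H^0(X,\det\sF)$ whose image under the multiplication map $\mathrm{Sym}^2 H^0(X,\det\sF)\to H^0(X,\det\sF^{\otimes 2})$ vanishes. The kernel of this multiplication map is exactly the space of quadrics of $\mP(H^0(X,\det\sF)^\vee)$ vanishing on the image of the map attached to the complete linear system $|\det\sF|$; since the fixed divisor $D_{\det\sF}$ is common to all sections, that rational map is literally $\phi_{|M_{\det\sF}|}$ and its image coincides with $Y=\phi_{|M_{\det\sF}|}(X)$. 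Hence every $\omega$-adjoint quadric is in particular a quadric containing $Y$.

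Finally I would invoke the complete-intersection hypothesis. If $Y$ is a complete intersection of hypersurfaces of degree $>2$, its saturated homogeneous ideal is generated by those forms, so it has no graded piece in degrees $\leq 2$; in particular $Y$ is nondegenerate and no nonzero quadric vanishes on it. Combining this with the previous paragraph, there are no $\omega$-adjoint quadrics, so Corollary \ref{torelloo} gives $\xi_{D_\sF}=0$. For the last assertion, if $D_\sF=0$ then $\sF(-D_\sF)=\sF$ and the natural map $H^1(X,\sF^\vee)\to H^1(X,\sF^\vee(D_\sF))$ is the identity, whence $\xi=\xi_{D_\sF}=0$.

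The genuinely substantive step is the middle one: identifying the cohomological notion of an adjoint quadric (a relation in $H^0(X,\det\sF^{\otimes 2})$) with an honest quadric through the schematic image $Y$, and checking that the presence of the fixed part $D_{\det\sF}$ does not disturb this identification. Once this dictionary is in place the complete-intersection input is immediate, since degree $>2$ leaves the quadratic strand of the ideal empty; no delicate estimate or further geometry is required.
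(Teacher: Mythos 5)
Your proposal is correct and follows the same route as the paper, whose proof of this corollary is simply the reduction to Corollary \ref{torelloo}: you have just made explicit the two implicit steps, namely that an $\omega$-adjoint quadric is an honest quadric vanishing on $Y=\phi_{|M_{\det\sF}|}(X)$ (via the kernel of $\mathrm{Sym}^2H^0(X,\det\sF)\to H^0(X,\det\sF^{\otimes 2})$), and that a complete intersection of hypersurfaces of degree $>2$ admits no such quadric. Your verification that $\partial_\xi=0$ forces $W\subset\Ker\partial_\xi^1$, and your handling of the case $D_\sF=0$, match the paper's intent exactly.
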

\begin{proof}
The claim follows directly by Corollary \ref{torelloo}.
\end{proof}

\begin{cor}\label{infinites} Let $X$ be an $n$-dimensional variety of general type with 
    irregularity $\geq n+1$ and such that its cotangent 
    sheaf is generated by global sections. Suppose also that there are no adjoint 
    quadrics containing the canonical image of $X$. Then the infinitesimal Torelli 
    theorem holds for $X$. In particular it holds if there are no quadrics of 
    rank less then or equal to $2n+3$ passing through the canonical image 
    of $X$.
\end{cor}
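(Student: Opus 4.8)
The plan is to apply the general theory with $\sF=\Omega^1_X$. First I would record the dictionary: $\det\sF=\omega_X$, $H^0(X,\det\sF)=H^0(X,\omega_X)$ and $\sF^\vee=\Theta_X$, so an element $\xi\in H^1(X,\Theta_X)=H^1(X,\sF^\vee)$ is an infinitesimal deformation and the differential $d\pi^0_n$ of the period map $\sP^{n,n}$ is exactly the cup product $\xi\mapsto\partial_\xi^n\colon H^0(X,\omega_X)\to H^1(X,\Omega^{n-1}_X)$; proving the $n$-infinitesimal Torelli theorem thus amounts to showing that $\partial_\xi^n=0$ forces $\xi=0$. The map $\phi_{|M_{\det\sF}|}$ is the canonical map and $Y:=\phi_{|M_{\det\sF}|}(X)$ is the canonical image. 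Since $\Omega^1_X$ is globally generated, $\omega_X=\det\sF$ is globally generated, whence $D_{\det\sF}=0$ and $\phi_{|M_{\det\sF}|}$ is the (nontrivial) canonical map; moreover at a general point one may choose $n$ global $1$-forms forming a coframe, so $\lambda^nH^0(X,\Omega^1_X)$ is base point free and $D_\sF=0$. Consequently the conclusion \lq\lq$\xi$ is supported on $D_\sF$\rq\rq\ of Theorem B will read simply $\xi=0$.

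Next I would take $\xi$ in the kernel, so $\partial_\xi^n=0$ on all of $H^0(X,\omega_X)$; in particular $\partial_\xi^n(\omega)=0$ for every adjoint form $\omega$, which is one of the hypotheses of Theorem \ref{torelloo1}. To invoke Corollary \ref{torelloo} I must exhibit a generic $(n+1)$-dimensional subspace $W\subset\Ker(\partial_\xi^1)$ which is also generic inside $H^0(X,\Omega^1_X)$, so that $D_W=D_\sF$ by \cite[Proposition 3.1.6]{PZ}. This is possible as soon as $\partial_\xi^1\colon H^0(X,\Omega^1_X)\to H^1(X,\sO_X)$ vanishes, and establishing $\partial_\xi^1=0$ is the crux of the argument. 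The route I would follow is to use that cup product with $\xi$ is a derivation of the bigraded algebra $\bigoplus_{p,q}H^q(X,\Omega^p_X)$: for all $\eta_1,\dots,\eta_n\in H^0(X,\Omega^1_X)$ one obtains $\sum_{i=1}^n(-1)^{i+1}(\partial_\xi^1\eta_i)\cup(\eta_1\wedge\cdots\widehat{\eta_i}\cdots\wedge\eta_n)=\partial_\xi^n(\eta_1\wedge\cdots\wedge\eta_n)=0$, and then to exploit global generation of $\Omega^1_X$, choosing the $\eta_i$ to restrict to a coframe near a general point, in order to separate these terms and deduce $\partial_\xi^1\eta_i=0$ for each $i$. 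I expect this separation to be the main obstacle, because the relation is an identity of classes in $H^1(X,\Omega^{n-1}_X)$ and one must exclude cancellation; the assumptions that $\Omega^1_X$ is globally generated and that $q\geq n+1$ are exactly what should drive this step.

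Granting $\partial_\xi^1=0$, one has $\Ker(\partial_\xi^1)=H^0(X,\Omega^1_X)$ of dimension $q\geq n+1$, so a generic $(n+1)$-dimensional $W$ both lies in $\Ker(\partial_\xi^1)$ and is generic in $H^0(X,\Omega^1_X)$. For such $W$ and a generic adjoint form $\omega$ we still have $\partial_\xi^n(\omega)=0$, and by hypothesis no adjoint quadric contains $Y$; hence Corollary \ref{torelloo} (which runs through Theorem \ref{torelloo1} and the Adjoint Theorem \ref{teoremaaggiunta}) yields that $\xi$ is supported on $D_\sF=0$, i.e.\ $\xi=0$, proving the injectivity of $d\pi^0_n$. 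Finally, since every adjoint quadric $\omega^2=\sum_{i=1}^{n+1}L_i\omega_i$ has rank at most $2n+3$, the absence of quadrics of rank $\leq 2n+3$ through $Y$ already forbids adjoint quadrics, which gives the last assertion.
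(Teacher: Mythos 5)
Your reduction is, at its core, the paper's own proof: set $\sF=\Omega^1_X$, observe that global generation of $\Omega^1_X$ forces $D_{\sF}=0$ (so that ``supported on $D_{\sF}$'' means trivial), and apply Corollary \ref{torelloo} to a generic $(n+1)$-dimensional $W\subset\Ker\partial_\xi^1=H^0(X,\Omega^1_X)$ (which exists because $q\geq n+1$), together with the hypothesis that no adjoint quadric contains the canonical image; the final sentence follows, as you say, because every adjoint quadric has rank at most $2n+3$. That part of your argument is complete and coincides with the paper's.

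The divergence is precisely the step you flag as the main obstacle: deducing $\partial_\xi^1=0$ from $\partial_\xi^n=0$. The paper does not attempt this. Its proof of Corollary \ref{infinites} starts from ``any $\xi$ such that $\partial_\xi=0$ \emph{and} $\partial_\xi^n(\omega)=0$'', i.e.\ it treats the vanishing of $\partial_\xi^1$ as part of the hypothesis, exactly as in Theorem B and Corollary \ref{suifibratiuno}; ``the infinitesimal Torelli theorem'' is thus read as injectivity of the differential of the period map on the full Hodge structure (so the weight-one piece is assumed to degenerate as well), not as injectivity of $d\pi^0_n$ alone. If you adopt that reading, delete the attempted derivation and your proof is the paper's. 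If you insist on the stronger $n$-infinitesimal Torelli statement as defined in the introduction ($d\pi^0_n$ injective by itself), then the derivation is genuinely needed and genuinely missing: the Leibniz identity only yields $\sum_{i=1}^{n}(-1)^{i+1}\,\partial_\xi^1(\eta_i)\cup\bigl(\eta_1\wedge\cdots\widehat{\eta_i}\cdots\wedge\eta_n\bigr)=0$ in $H^1(X,\Omega^{n-1}_X)$, and neither global generation of $\Omega^1_X$ nor $q\geq n+1$ visibly excludes cancellation among the summands: polarizing in the $\eta_i$ reproduces the same family of relations, and the classes live in a cohomology group rather than at a point, so the coframe argument does not localize. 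As written, therefore, your proposal establishes the statement only under the weaker reading --- which is the one the paper's own proof actually establishes.
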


\begin{proof}
By Corollary \ref{torelloo}, any $\xi\in H^{1}(X,\Theta_{X})$ such that 
$\partial_\xi =0$ and $\partial_{\xi}^{n}(\omega)=0$, 
where $\omega$ is an adjoint form associated to a generic 
$n+1$-dimensional subspace $W\subset H^0(X,\Omega^{1}_{X})$, is supported on the 
branch locus of the Albanese morphism and, since we have assumed it to 
be trivial, then the trivial infinitesimal deformation is the only 
possible case. 
\end{proof}

\begin{rmk}\label{guardaecco} As a typical 
    application we obtain infinitesimal Torelli for a smooth divisor 
    $X$ of an $n+1$-dimensional Abelian variety $A$ such that 
    $p_{g}(X)=n+2$; for explicit examples consider the case of a 
    smooth surface $X$
    in polarization $(1,1,2)$ in an abelian $3$-fold $A$. The 
    invariants of $X$ are $p_{g}(X)=4$, $q(X)=3$ and $K^{2}=12$. The 
    canonical map is in general a birational morphism onto a surface 
    of degree $12$. See \cite[Theorem 6.4]{CS}.
\end{rmk}

%\begin{rmk} Note that if the canonical map of $X$ is an embedding then our 
%   condition on the rank of the quadrics containing the variety 
%    $X=Y$ is up to now not so clear; but see the end of the 
%    introduction of this paper.
%  \end{rmk}

\section{Families with birational fibers}
\label{sezionefamiglie}

Theorem \ref{torelloo1} provides a criterion to understand which
families of irregular varieties of general type have birational 
fibers. First we recall some basic definitions.

\subsection{Families of morphisms}

 A 
 {\it{family}} of $n$-dimensional varieties is a flat smooth 
proper morphism $\pi\colon\sX\rightarrow B$ such that the {\it{fiber}} $X_b :=\pi^{-1}(b)$ over a point  $b\in B$ has dimension $n$. The 
variety $B$ is called {\it{base of the family}}. We only assume that $B$ is smooth connected and
analytic.  We restrict our study to the case where $X_{b}$ is an 
irregular variety of
general type. We recall that any irregular variety $X$ comes equipped 
with its Albanese variety ${\rm{Alb}} (X)$ and its Albanese morphism 
${\rm{alb}}(X):X_{}\rightarrow {\rm{Alb}}(X_{})$. 

A {\it{fibration}} is a surjective proper flat morphism $f \colon X\to Z$
 with connected fibers between the smooth varieties $X$ and $Z$. A fibration  $f \colon X\to Z$ is {\it{irregular}} if $Z$ is an irregular variety.

We recall that a smooth  irregular variety $X$ is said to be of {\it{maximal Albanese dimension}}
if ${\rm{dim}}\, {\rm{alb}}(X) = {\rm{dim}}\, X$. If ${\rm{dim}}\, {\rm{alb}}(X) = {\rm{dim}}\, X$ and
 ${\rm{alb}}\colon X\to {\rm{Alb}}(X)$ is {\it not surjective}, that is $q(X)> {\rm{dim}}\, X$, $X$ is said to be of {\it{Albanese general type}}. 
A fibration $f \colon X \to Z$  is called a {\it{higher  irrational pencil}} if $Z$ is of Albanese general type. 
An irregular variety $X$ is said to be{\it{ primitive}} if it does not admit any higher irrational pencil; see: \cite[Definition 
1.2.4]{victor1}. 
Note that irregular fibrations (resp. higher irrational pencils) are higher-dimensional analogous 
to fibrations over non-rational curves (resp. curves of genus $g \geq 2$).

To study a family $\pi\colon\sX\rightarrow B$ of
irregular varieties it 
is natural to consider the case where it comes equipped with a family 
$p\colon \sA\rightarrow B$ of Abelian varieties; that is, the fiber $A_{b}:=p^{-1}(b)$ is an Abelian variety
of dimension $a>0$.
%**************DEFINIZIONE CHIAVE***********************
\begin{defn}\label{chiave} Let $\pi\colon\sX\rightarrow B$ be a family of irregular varieties
    of general type and $p:\sA\rightarrow B$ a family of Abelian varieties. A morphism
$\Phi\colon \sX{\rightarrow}\sA$ will be called a
{\it family of Albanese type} over $B$ if:
\begin{itemize}
\item[(i)] $\Phi$ fits into the
commutative diagram
$$\begin{array}{rcl}
&\sX\stackrel{\Phi}
{\longrightarrow}\sA&\\
\pi \!\!\!\!\!\!\!\!& \searrow \,\,\,\swarrow &\!\!\!\!\!\!\!\!\! p\\
& \!B. &
\end{array} $$

\item[(ii)] The induced map
$\phi_{b}\colon X_{b}\rightarrow A_{b}$ of $\Phi$ on
$X_{b}$ is birational onto its image $Y_{b}.$

\item[(iii)] The cycle $Y_{b}$ generates the fiber $A_{b}$ as a group.

\end{itemize}\noindent
\end{defn}
%************************DEFINIZIONE CHIAVE: FINE*********************
\noindent See: \cite[Definition 1.1.1.]{PZ}. We remark that $a>n$ ( cf. see
\cite[ p.311 and Corollary to Theorem 10.12,(i)]{Ii}).
We shall say that two families over $B$,
$\Phi: \sX \rightarrow \sA $ and
$\Psi:\sY \rightarrow \sA $, of Albanese type
have the same  {\it{image}} if it is true
fiberwise, that is $\phi_b(X_b)=\psi_b(Y_b)$ for every $b\in B$.
\medskip
\noindent

\noindent
{\it{Base change.}} Albanese type families have a good behaviour
under base
change. In fact let
$\mu:B^{'}\rightarrow B$ be a base change, then
$\mu^{\ast}(\Phi)=\Phi\times
{\rm{id}}:\sX\times_{B} B^{'}\stackrel{}{\rightarrow}\sA\times_{B} B^{'}$
is an
Albanese type family over $B^{'}.$ In particular for a connected
subvariety
$C\hookrightarrow B$ the base change of $\Phi\colon \sX{\rightarrow}\sA$ to $C$ is well defined and
it will be denoted by $\Phi_{C}\colon \sX_{C}\to\sA_{C}.$
\medskip

%*************** SEIONI E EQUIVALENZA****************
\noindent{\it{Translated family.}} If $s\colon B\to\sA$ is a section of
$p:\sA\rightarrow B$, we define the translated family
$\Phi_{s}:\sX\rightarrow
\sA$ of $\Phi$ by the formula:
$$\Phi_{s}(x)=\Phi(x)+ s(\pi(x)).$$
Notice that $\Phi_{s}:\sX\rightarrow
\sA$ is a family of Albanese type.
Two
families
$\Phi$ and $\Psi$ over $B$
are said to be {\it{translation equivalent}}
if there exists a section $\sigma$ of $p$ such that
the {\it{images}} of
$\Phi_{\sigma}$
and $\Psi$ (fiberwise) coincide.
%*************** SEIONI E EQQUIVALENZA: FINE***********************

We are interested in a condition that guarantees that, up
to restriction, the fibers of the restricted 
family are birationally equivalent. For
the reader's convenience we recall the following definition given in 
\cite[definition 1.1.2]{PZ}:

%***************DEFINIZIONE DI EQUIVALENZA*******************
\begin{defn}\label{panna}  Two families of Albanese type
$\Phi\colon \sX{\rightarrow}\sA$,
$\Phi'\colon \sX^{'} {\rightarrow}\sA^{'}$ over, respectively, $B$ and $B'$ will be said
{\it{locally equivalent}}, if there exist an open set $U\subset B$ an open set $U'\subset B'$ and a biregular map $\mu \colon U'\to U:=\mu(U')\subset B$ such that the pull-back families 
$\mu^{\ast}( \Phi_{U})$ and $\Phi^{'}_{U^{'}}$ are {\rm{translation equivalent}}. We will say that $\Phi$ is {\it{trivial}} if $\sX=X\times B$,
$\sA=A\times B$ and $\pi_{A}(\Phi({X_{b}}))=\pi_{A}(\Phi(X_{b_{0}}))$ for all $b$ where $\pi_{A}\colon A\times B\to A$ is the natural projection.
\end{defn}
%***************DEFINIZIONE DI EQUIVALENZA: FINE*******************

%**************POLARIZZAZIONE*************
\noindent
{\it{Polarization.}} 
We recall that a polarized variety is a couple $(X,H)$ where $X$ is a 
variety and $H$ is a big and nef divisor on it. The Albanese type family $\Phi$
provides the fibers $A_{b}$ with a natural polarization. Letting
$\omega$ a Chern form of the canonical divisor $K_{X_{b}}$,
the Hermitian form of the polarization $\Xi_{b}$ is defined
on $H^{1,0}(A_{b})$ by:
\begin{equation}\label{polarizzazione}
\Xi_{b}(\eta_{1},\eta_{2})=\int_{X_{b}}
\phi_{b}^{\ast}\eta_{1}\wedge\phi_{b}^{\ast}{\overline{\eta_{2}}}\wedge\omega^
{n-1}.
\end{equation}\noindent Notice that, in this way,
$p:\sA\rightarrow B$ becomes a family of {\it{polarized}}
Abelian varieties. There is then a suitable moduli variety ${\bf{\sA_{a}}}$
parameterizing polarized
Abelian variety of dimension $a$, and a holomorphic map, called the 
period map,
$P\colon B\to {\bf{\sA_{a}}}$ defined by $ P(b)= (A_b, \Xi_b)$. 
% and the differential
% \begin{equation}\label{periodi}
% d\sP:T_{B,b}\rightarrow T_{{\bf{\sA_{a}}},(A_b, \Xi_b)}.
% \end{equation}
% \noindent
Finally we remark that if $\sP(B)$ is a point, then, up to shrinking $B,$
$p:\sA\rightarrow B$ is equivalent to the trivial family, $\sA\rightarrow A\times B$ where $A$ is the fixed Abelian variety
corresponding to $\sP(B)$.  \medskip

%**************POLARIZZAZIONE: FINE*************

%**********************ESEMPIO DELLA FIBRAZIONE DI ALBANESE***********

\noindent{\it{Example.}} The standard example of Albanese type family is given by a family 
$\pi:\sX\rightarrow B$ with a section $s\colon B\rightarrow \sX$. 
Indeed by  $s\colon B\rightarrow \sX$  we have a 
family $p\colon {\rm{Alb}}(\sX)\rightarrow B$ whose fiber is
$p^{-1}(b)={\rm{Alb}}(X_{b})$; the section gives
a family $\Phi\colon \sX\rightarrow{\rm{Alb}}(\sX)$
with fiber:
$${\rm{alb}}(X_{b}):X_{b}\rightarrow {\rm{Alb}}(X_{b}).$$
\noindent
If we also assume that $\phi_{b}={\rm{alb}}(X_{b})$ has
degree $1$ onto the image then $\Phi\colon 
\sX\rightarrow{\rm{Alb}}(\sX)$ is an Albanese type family. We will call
$\sX\stackrel{\Phi}{\rightarrow}{\rm{Alb}}(\sX)$ an {\it{Albanese
family}}.
\medskip
%****************FINE ESEMPIO DELLA FIBRAZIONE DI ALBANESE************
\medskip

We will use the following:
\begin{prop}\label{pomodoro}
An Albanese type family $\Phi\colon\sX\to\sA$ is locally equivalent to a trivial family if
and only if the fibers $X_{b}$ are birationally equivalent.
\end{prop}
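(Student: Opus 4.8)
The plan is to prove both implications through the period map $P\colon B\to\mathbf{\sA_a}$ of the associated family of polarized abelian varieties. Fix a base point $b_0\in B$ and write $X:=X_{b_0}$, $A:=A_{b_0}$, $Y:=Y_{b_0}=\phi_{b_0}(X)$. For the \emph{only if} direction, suppose $\Phi$ is locally equivalent to a trivial family $\Phi'\colon\sX'\to\sA'$. By Definition \ref{panna} there are opens $U\subset B$, $U'\subset B'$ and a biregular $\mu\colon U'\to U$ so that $\mu^{\ast}(\Phi_U)$ is translation equivalent to $\Phi'_{U'}$, and every fiber of the trivial family $\Phi'$ has the \emph{same} image $Y'\subset A'$, birational (via $\phi'_{b_0}$) to the fixed fiber of $\sX'$. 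Translation equivalence means that, after applying a section $\sigma$ of $p$, the fiberwise image $Y_{\mu(b')}$ of $\mu^{\ast}(\Phi_U)$ is a translate of $Y'$, hence isomorphic to it; since $\phi_{\mu(b')}$ is birational onto $Y_{\mu(b')}$ and $\phi'$ is birational onto $Y'$, the fiber $X_{\mu(b')}$ is birational to the fixed fiber of $\sX'$. Thus the fibers over $U$, and hence the fibers of $\pi$, are mutually birational.

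For the \emph{if} direction, assume all fibers $X_b$ are birational, and first I would show that $P$ is constant. A birational map $f\colon X_b\dashrightarrow X_{b'}$ induces, by the universal property of the Albanese and the fact that $Y_b$ generates $A_b$, an isomorphism $f_\ast\colon A_b\to A_{b'}$ compatible with $\phi_b,\phi_{b'}$ up to a translation; since holomorphic $1$-forms are translation invariant, it remains only to check that $f_\ast$ is an isometry for the polarizations $\Xi_b,\Xi_{b'}$ of (\ref{polarizzazione}). Resolving $f$ by a common smooth model $Z\to X_b$, $Z\to X_{b'}$, the new contributions to the defining integral of (\ref{polarizzazione}) come from exceptional divisors; but each such term is wedged with the pullbacks $\phi^{\ast}\eta_i$ of holomorphic $1$-forms, which vanish identically along any divisor contracted to lower dimension. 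Hence the integral, and so $\Xi$, is a birational invariant, giving $(A_b,\Xi_b)\cong(A_{b_0},\Xi_{b_0})$ for all $b$, i.e.\ $P$ is constant.

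Since $P(B)$ is a point, by the remark that a constant period map trivializes $p$ up to shrinking $B$, we may assume $p\colon\sA\to B$ is the trivial family $A\times B\to B$. Under this trivialization the images $Y_b\subset A$ form a holomorphic family of subvarieties, each obtained from $Y_{b_0}$ by a polarization-preserving automorphism of $A$ followed by a translation. As the group of such automorphisms is finite, on the connected shrunk base it is locally constant; absorbing that fixed automorphism into the choice of reference subvariety, I obtain a holomorphic map $\tau\colon B\to A$ with $Y_b=Y_{b_0}+\tau(b)$. Then $\sigma(b):=(-\tau(b),b)$ is a section of $p$ for which the translated family $\Phi_\sigma$ has constant fiberwise image $Y_{b_0}$. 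Producing the trivial family $\Phi'\colon X\times B\to A\times B$, $(x,b)\mapsto(\phi_{b_0}(x),b)$, which has constant image $Y_{b_0}$ and is trivial in the sense of Definition \ref{panna}, I conclude that $\Phi_\sigma$ and $\Phi'$ share the same image fiberwise; taking $U=U'=B$ and $\mu=\mathrm{id}$, $\Phi$ is locally equivalent to $\Phi'$.

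The hard part is the second paragraph: verifying that the canonical polarization (\ref{polarizzazione}) is a genuine birational invariant, so that the period map really is constant, together with the rigidity step that forces the family $\{Y_b\}$ to move only by translation rather than by a varying automorphism of $A$. The remaining bookkeeping with sections and translations is routine once these two points are secured.
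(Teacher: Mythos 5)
The paper does not actually prove Proposition \ref{pomodoro}; it cites \cite[Proposition 1.1.3]{PZ}, so there is no in-paper argument to compare with. Your overall strategy (constancy of the period map $P$, trivialization of $p\colon\sA\to B$, finiteness of polarized automorphisms forcing the $Y_b$ to move by translation only, and the explicit section $\sigma$) is the one the surrounding text points to, and the easy direction plus the final bookkeeping are fine. The genuine gap is in the step you yourself single out as the hard one: the claimed birational invariance of the polarization (\ref{polarizzazione}). Your justification --- that the pullbacks $\phi^{\ast}\eta_i$ ``vanish identically along any divisor contracted to lower dimension'' --- is false for $n\geq 3$. If $E\subset Z$ is $p$-exceptional, then $\phi^{\ast}\eta|_E$ is pulled back from $\phi(p(E))$, which has dimension $\leq n-2$ but need not be a point, so the form need not vanish on $E$; and the remaining factor $\omega^{n-2}$ is not pulled back from that image, so no dimension count kills the term. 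Concretely, $\Xi$ is \emph{not} a birational invariant of the pair $(X,\phi)$: for $X'=\mathrm{Bl}_CX$ with $C$ a curve in a threefold $X$ one computes
\begin{equation*}
\Xi'(\eta,\eta)=\Xi(\eta,\eta)-\int_C\phi^{\ast}(\eta\wedge\bar{\eta}),
\end{equation*}
and the correction term is nonzero whenever $\phi(C)$ is a curve on which $\eta$ does not vanish.

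The conclusion you need (constancy of $P$) is still true, but for a reason that uses the family structure rather than birational invariance alone: $\Xi_b(\eta_1,\eta_2)$ is the cup product $\phi_b^{\ast}[\eta_1]\cup\phi_b^{\ast}[\bar{\eta}_2]\cup c_1(K_{X_b})^{n-1}$ evaluated on the fundamental class, and both this pairing and $c_1(K_{X_b})$ are flat over the connected base; what remains is to show that the subspace $\phi_b^{\ast}H^{1,0}(A_b)\subset H^1(X_b,\mC)$ is flat, and this is where mutual birationality of the fibers (birational invariance of the weight-one Hodge structure, finiteness of the automorphism group of a polarized abelian variety, connectedness of $B$) actually enters. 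Two smaller points: (i) the assertion that a birational map $X_b\dashrightarrow X_{b'}$ induces $f_{\ast}\colon A_b\to A_{b'}$ needs an argument when $A_b$ is a proper quotient of ${\rm{Alb}}(X_b)$, since the universal property only gives ${\rm{Alb}}(X_b)\to A_{b'}$ and one must check that it descends (rigidity of abelian subvarieties in the family does the job); (ii) in the ``only if'' direction, local equivalence only controls the fibers over the open set $U$, so an extra remark is needed if the statement is read as applying to every fiber of $\pi$.
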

\begin{proof} See \cite[Proposition 1.1.3]{PZ}.\end{proof}

\subsection{Families with liftability conditions}
To find conditions which force the fibers of a family 
$\pi\colon\sX\rightarrow B$ to be birationally equivalent, it is 
natural to understand conditions on Albanese type families 
whose associated family of Abelian varieties is trivial. The easiest 
condition to think of is given by the condition of liftability for any $1$-form.

\begin{prop}\label{insalata}
Let $\Phi\colon\sX\to\sA$ be an Albanese type family such that for 
every $b\in B$ the map $ H^0({\sX},\Omega_{{\sX}}^1)
\to H^0(X_{b},\Omega_{X_{b}}^1)$ is surjective. Then up to shrinking 
$B$ the fibers of 
$p\colon\sA\to B$ are isomorphic.
\end{prop}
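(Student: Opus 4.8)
The plan is to deduce from the hypothesis that the derivative of the period map $P\colon B\to{\bf{\sA_{a}}}$ of the abelian family $p\colon\sA\to B$ vanishes identically; since $B$ is connected this makes $P$ constant, so $P(B)$ is a point, and by the remark preceding the statement this means that, up to shrinking $B$, $p\colon\sA\to B$ is the trivial family. In particular all its fibers $A_b$ are isomorphic, which is the assertion.

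The heart of the argument is a standard liftability lemma that I would isolate first: if $\eta\in H^0(X_b,\Omega^1_{X_b})$ is the restriction of a global form $\tilde\eta\in H^0(\sX,\Omega^1_\sX)$, then $\kappa(\xi)\cup\eta=0$ in $H^1(X_b,\sO_{X_b})$ for every Kodaira--Spencer class $\kappa(\xi)$, $\xi\in T_{B,b}$. To prove it I restrict the relative cotangent sequence $0\to\pi^*\Omega^1_B\to\Omega^1_\sX\to\Omega^1_{\sX/B}\to 0$ to the fiber, getting $0\to\sO_{X_b}\otimes\Omega^1_{B,b}\to\Omega^1_\sX|_{X_b}\to\Omega^1_{X_b}\to 0$, whose extension class is the Kodaira--Spencer class of the family. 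Its connecting homomorphism $H^0(X_b,\Omega^1_{X_b})\to H^1(X_b,\sO_{X_b})\otimes\Omega^1_{B,b}$ is, after contraction with $\xi$, exactly cup product with $\kappa(\xi)$ (this is Griffiths' description of the derivative of the period map; see \cite{grif1}, \cite{Vo1}). A form lies in the kernel of this connecting map precisely when it lifts to a section of $\Omega^1_\sX|_{X_b}$, and here $\eta=\tilde\eta|_{X_b}$ does lift; hence $\kappa(\xi)\cup\eta=0$.

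Next I combine this with the surjectivity hypothesis. Given $\eta\in H^0(A_b,\Omega^1_{A_b})$, the form $\phi_b^*\eta\in H^0(X_b,\Omega^1_{X_b})$ extends to $\sX$ (everything does, by hypothesis), so the lemma gives $\kappa_\sX(\xi)\cup\phi_b^*\eta=0$ for all $\xi$. Since $\Phi$ is of Albanese type, $\phi_b^*\colon H^0(A_b,\Omega^1_{A_b})\to H^0(X_b,\Omega^1_{X_b})$ is injective, and $\phi_b^*$ on the families over $B$ is a morphism of variations of Hodge structure; in particular $\phi_b^*\colon H^1(A_b,\sO_{A_b})\to H^1(X_b,\sO_{X_b})$ is injective, being conjugate to the injective map on $(1,0)$-forms. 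Naturality of the Kodaira--Spencer cup product along $\Phi$ yields $\phi_b^*(\kappa_\sA(\xi)\cup\eta)=\kappa_\sX(\xi)\cup\phi_b^*\eta=0$, whence $\kappa_\sA(\xi)\cup\eta=0$ for all $\xi$ and all $\eta\in H^0(A_b,\Omega^1_{A_b})$. This is exactly the vanishing $dP=0$ at $b$, and as $b$ is arbitrary the reduction of the first paragraph is complete.

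I expect the main obstacle to be the two structural identifications: that the connecting map of the restricted cotangent sequence really is cup product with the Kodaira--Spencer class, and that $\phi_b^*$ intertwines the cup products of the two families. Both are the familiar content of Griffiths' infinitesimal period relations, but in the present relative and merely analytic setting (with $B$ only a smooth connected analytic variety) they must be stated with some care; once they are in place the remaining steps---the passage from the vanishing of the cup products to $dP=0$, and the final appeal to the remark that a constant period map yields, after shrinking, a trivial abelian family with isomorphic fibers---are purely formal.
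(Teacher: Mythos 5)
Your proof is correct and follows essentially the same route as the paper's: both arguments use the commutative diagram comparing the restriction of the relative cotangent sequence of $p\colon\sA\to B$ (pulled back by $\phi_b$) with that of $\pi\colon\sX\to B$, deduce from the liftability hypothesis that the connecting homomorphism $H^0(A_b,\Omega^1_{A_b})\to H^1(A_b,\sO_{A_b})$ vanishes via the injectivity of $\phi_b^*$ on $H^1(\cdot,\sO)$, and conclude that the (constant) period map of the abelian family forces triviality after shrinking $B$. The only difference is cosmetic: you phrase the connecting maps as cup products with Kodaira--Spencer classes and cite Griffiths, where the paper works with the extension classes directly and cites \cite{CP}.
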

\begin{proof} Let $\mu_b\in {\rm{Ext}}^1(\Omega^{1}_{A_b},\sO_{A_b})$
    be the class given by the family $p\colon\sA\to B$,
 that is the class of the following extension:
$$
0\to \sO_{A_{b}}\to\Omega^1_{\sA|A_{b}}\to\Omega^1_{A_{b}}\to 0.
$$\noindent
Since $\phi_b$ is flat we have the following sequence
$$
0\to \phi_b^*\sO_{A_{b}}\to\phi_b^*\Omega^1_{\sA|A_{b}}\to\phi_b^*\Omega^1_{A_{b}}\to 0
$$ which fits into the following diagram
\begin{equation*}
\xymatrix { 0\ar[r]&\phi_b^*\sO_{A_{b}} \ar[r]\ar@{=}[d]&
\phi_b^*\Omega^1_{\sA|A_{b}}\ar[r]\ar[d] & \phi_b^*\Omega^1_{A_{b}}\ar[d]\ar[r]&0\\
0\ar[r]&\sO_{X_b} \ar[r]&
\Omega^1_{\sX|X_{b}}\ar[r] & \Omega^1_{X_{b}}\ar[r]&0.
}
\end{equation*}
In cohomology we have
\begin{equation*}
\xymatrix {H^0(X_b,\phi_b^*\Omega^1_{A_{b}})\ar[d]\ar[r]&H^1(X_b, \sO_{X_b})\ar@{=}[d]\\
H^0(X_b,\Omega^1_{X_{b}})\ar[r]&H^1(X_b, \sO_{X_b})
}
\end{equation*} so, by commutativity and by the hypothesis $ H^0({\sX},\Omega_{{\sX}}^1)
\twoheadrightarrow H^0(X_{b},\Omega_{X_{b}}^1)$, we immediately obtain $ H^0(X_b,\phi_b^*\Omega^1_{\sA|A_{b}})
\twoheadrightarrow H^0(X_{b},\phi_b^*\Omega^1_{A_{b}})$ and hence the coboundary $\partial_{\mu_{b}}\colon H^0(A_b ,\Omega^1_{A_{b}})\to H^1(A_b, \sO_{A_{b}})$ is trivial.
%By hypothesis $\partial_{\mu_{b}}\colon H^0(A_b ,\Omega^1_{A_{b}})\to H^1(A_b, \sO_{A_{b}})$
%is trivial since, by
%the universal property of the Albanese morphism, and by condition 
 %$(iii)$ of definition \ref{chiave} we can construct the projection 
 %$H^0(X_{b},\Omega_{X_{b}}^1)=H^{0}({\rm{Alb}}(X_{b}), 
 %\Omega^{1}_{ {\rm{Alb}} (X_{b}) }  )\twoheadrightarrow H^0(\Omega^1_{A_{b}})$ and 

%\begin{equation*}
%\xymatrix { &&     H^0(\sX, \Omega^{1}_{\sX} )       \ar[d] \ar[r]&  H^0(\sA, \Omega^{1}_{\sA}  )  \ar[d]\\
%& &   H^0(X_{b},\Omega_{X_{b}}^1)  \ar[r]&     H^0(A_{b},\Omega_{A_{b}}^1)        &}
%\end{equation*} 
%\noindent
%is commutative. 
Then by cf. \cite [Page  78]{CP} we conclude.
\end{proof}

For an Albanese type family
$\Phi\colon\sX\to\sA$ such that $ H^0({\sX},\Omega_{{\sX}}^1)
\twoheadrightarrow H^0(X_{b},\Omega_{X_{b}}^1)$ we can say more. 
Actually up to shrinking $B$ it is trivial to show that for every 
$b\in B$ it holds that ${\rm{Alb}}(X_b)=A$ where $A$ is a {\it{fixed}} Abelian 
variety. For later reference we state:

\begin{cor}\label{musica} Let $\Phi\colon\sX\to\sA$ be an Albanese type family such that  $ H^0({\sX},\Omega_{{\sX}}^1)
\twoheadrightarrow H^0(X_{b},\Omega_{X_{b}}^1)$ where $b\in B$. Then for every $b\in B$ there exists an open neighbourhood $U$ 
such that the restricted family $\Phi_{U}\colon\sX_{U}\to\sA_{U}$ is locally equivalent to 
$\Psi\colon \sX_{U}\to{ \widehat{A}}\times U$, where $\widehat{A}$ is an Abelian variety. Moreover there exists another Abelian variety $A$ 
such that for every $b\in U$ it holds that $A={\rm{Alb}}(X_b)$ and the natural morphism $ A\to\widehat A$ gives a factorization of
$\Psi \colon\sX_U\to \widehat A\times U$ via 
${\rm{alb}}(\sX_{U})\colon\sX_{U}\to A\times U$.
\end{cor}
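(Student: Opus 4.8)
The plan is to deduce the statement in three movements: first trivialise the family of abelian varieties $p\colon\sA\to B$ over a suitable neighbourhood, then transport $\Phi_U$ across that trivialisation to produce $\Psi$, and finally invoke the universal property of the Albanese morphism to factor $\Psi$ through the (constant) Albanese family. Throughout I keep the given point $b$ fixed and shrink $B$ to a connected neighbourhood $U$ of it whenever needed.

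First I would extract from the proof of Proposition \ref{insalata} the sharper conclusion that the coboundary $\partial_{\mu_b}\colon H^0(A_b,\Omega^1_{A_b})\to H^1(A_b,\sO_{A_b})$ vanishes for \emph{every} $b$, not merely that the fibres are abstractly isomorphic. For a family of polarised abelian varieties this coboundary is precisely the differential of the period map $P\colon B\to {\bf{\sA_a}}$ (cf. \cite[Page 78]{CP}), so $dP\equiv 0$; since $B$ is connected, after shrinking to $U$ the map $P|_U$ is constant. By the remark following (\ref{polarizzazione}), constancy of the period map yields an equivalence of $p\colon\sA_U\to U$ with the trivial family $\widehat{A}\times U\to U$, where $\widehat{A}$ is the polarised abelian variety corresponding to the value $P(U)$.

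Composing $\Phi_U$ with this trivialising identification $\sA_U\cong\widehat{A}\times U$ gives a family of Albanese type $\Psi\colon\sX_U\to\widehat{A}\times U$; by construction the two have the same fibrewise image after the identification, so, modulo the translation built into the trivialisation of the abelian family, $\Phi_U$ and $\Psi$ are locally equivalent in the sense of Definition \ref{panna}. This establishes the first assertion. For the second assertion I would use the fact, recalled immediately before the statement, that the surjectivity hypothesis also makes the weight-one Hodge structures of the fibres constant, so that ${\rm{Alb}}(X_b)=A$ is a fixed abelian variety and the associated Albanese family ${\rm{alb}}(\sX_U)\colon\sX_U\to A\times U$ is trivial over $U$. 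For each $b$ the Albanese-type map $\phi_b\colon X_b\to\widehat{A}$ sends $X_b$ into an abelian variety, so by the universal property of the Albanese it factors, up to a translation, through ${\rm{alb}}(X_b)\colon X_b\to A$, and the induced arrow on abelian varieties is the desired homomorphism $A\to\widehat{A}$; carrying this out fibrewise over $U$ produces the factorisation of $\Psi$ through ${\rm{alb}}(\sX_U)$.

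The main obstacle, and the step deserving most care, is the passage from the infinitesimal vanishing $\partial_{\mu_b}=0$ to the genuine triviality of the family $\sA_U$, as opposed to the merely fibrewise isomorphism already furnished by Proposition \ref{insalata}: this is exactly where the constancy of the period map and the remark following (\ref{polarizzazione}) are indispensable. A secondary technical point is that the translation ambiguity in the universal property of the Albanese must be shown to depend holomorphically on $b$, so that it can be absorbed uniformly into the trivialisations and the factorisation of $\Psi$ holds as a morphism of families over all of $U$ rather than only fibre by fibre.
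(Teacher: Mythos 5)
Your argument is correct and follows essentially the route the paper itself indicates: the paper gives no formal proof of Corollary \ref{musica}, but the surrounding text reduces it to Proposition \ref{insalata}, the remark that a constant period map $\sP$ trivializes $p\colon\sA\to B$ locally, and the universal property of the Albanese morphism, which is exactly the chain you spell out. Your two flagged caveats (upgrading fibrewise isomorphism to triviality of $\sA_U$ via constancy of the period map, and holomorphic dependence of the translations) are precisely the points the paper leaves implicit, so filling them in as you do is consistent with, and slightly more careful than, the paper's own treatment.
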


We recall a definition given in the introduction of this paper and suitable to study Torelli type problems for irregular varieties.

%***************TORELLI FAMIGLIE******************

\begin{defn}\label{torellifamily} We say that a family $f\colon \sX {\rightarrow}B$ of relative dimension $n$ 
    satisfies extremal liftability conditions over $B$ if
\begin{itemize}
\item[(i)] $ H^0({\sX},\Omega_{{\sX}}^1)
\twoheadrightarrow H^0(X_{b},\Omega_{X_{b}}^1)$;
\item[(ii)] $H^0({\sX},\Omega_{{\sX}}^n)
\twoheadrightarrow H^0(X_{b},\Omega_{X_{b}}^n)$. 
\end{itemize}
\noindent 
\noindent 
\end{defn}
This definition means that all the $1$-forms and all the $n$-forms of the fibers are obtained by restriction from the family $\sX$. Comparing the two conditions with the hypotheses of Theorem \ref{torelloo1}, they ensure that $\partial_\xi=0$ and $\partial_\xi^n=0$.

%************************DEFINIZIONE CHIAVE: FINE*********************

\begin{rem} Let $X$ be a smooth variety such that ${\rm{alb}}(X)\colon X\to{\rm{Alb}}(X)$ has degree $1$. Let 
$f\colon X \longrightarrow Z$ be a fibration of relative dimension $n$ such that  the general fiber $f^{-1}(y)=X_{y}$ is smooth
of general type and with irregularity $q\geq n+1$. 
By rigidity of Abelian subvarieties, the image of the map $Alb(X_y) \longrightarrow Alb(X)$ is a (translate of a) fixed abelian variety $A$. If, moreover, we
have surjections  $H^0(X,\Omega_X^1) \twoheadrightarrow
H^0(X_y,\Omega_{X_y}^1)$ and  $H^0(X,\Omega_X^n) \twoheadrightarrow
H^0(X_,\Omega_{X_y}^n)$, then $A=Alb(X_y)$ and taking a
sufficiently small polydisk around any smooth fibre of $f$ we obtain a
family which satisfies extremal liftability conditions.
\end{rem}

%****************TORELLI FAMIGLIE FINE**************

\subsection{The theorem}

Our main theorem is based on the Volumetric Theorem; see \cite[Theorem 1.5.3]{PZ}:

\begin{thm}\label{volteoremavolume}
    Let $\Phi\colon\sX\to\sA$ be an Albanese type family such that 
    $p\colon\sA\rightarrow B$ has fibers isomorphic to a fixed Abelian variety $A$.
    Let $W\subset 
    H^{0}(A,\Omega^{1}_{A})$ be a generic $n+1$-dimensional subspace 
    and $W_{b}\subset H^{0}(X_{b},\Omega^{1}_{X_{b}})$ its pull-back 
    over the fiber $X_{b}$. Assume that for every point $b\in B$ it holds that $\omega_{_{\xi_{b},W_{b},
\sB_{b}}}\in\lambda^{n}W_{b}$ where $\xi_{b}\in 
H^{1}(X_{b},\Theta_{X_{b}})$ is the class given on $X_{b}$ by 
$\pi\colon\sX\to B$, then the fibers of $\pi\colon\sX\to B$ are birational.
\end{thm}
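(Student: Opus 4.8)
The plan is to trivialize the abelian family, reduce the $n$-dimensional images to divisors on a fixed low-dimensional abelian variety by a generic projection, and then invoke the Adjoint Theorem to show these divisors do not move. Since $p\colon\sA\to B$ has all fibers isomorphic to a fixed $A$, after shrinking $B$ we identify $\sA\cong A\times B$, so that $\Phi$ becomes a family of maps $\phi_b\colon X_b\to A$, each birational onto an $n$-dimensional cycle $Y_b\subset A$. As $W\subset H^0(A,\Omega^1_A)$ is a generic $(n+1)$-dimensional space of invariant forms, it determines a surjective homomorphism $q_W\colon A\to A_W$ onto an $(n+1)$-dimensional abelian variety with $H^0(A_W,\Omega^1_{A_W})\cong W$; for generic $W$ the image $Z_b:=q_W(Y_b)$ is a divisor in $A_W$, and the canonical forms of the generically finite map $q_W\circ\phi_b$ are precisely the $\omega_i=\lambda^n(\eta_1\wedge\cdots\wedge\widehat{\eta_i}\wedge\cdots\wedge\eta_{n+1})$ spanning $\lambda^nW_b$.

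The volumetric input enters next. Because $B$ is connected and $A_W$ is fixed, the class $[Z_b]\in H^2(A_W,\mZ)\cap H^{1,1}$ --- equivalently the self-intersection $\int_{A_W}c_1(\sO_{A_W}(Z_b))^{n+1}$, the volume of $Z_b$ --- is locally constant. Hence, after normalizing by a translation of $A_W$, all the $Z_b$ lie in a single fixed linear system $|L|=\mP(H^0(A_W,L))$, and the theorem reduces to showing that the holomorphic arc $b\mapsto Z_b$ in $|L|$ is constant modulo the translation action of $A_W$.

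The heart of the argument, and the main obstacle, is to identify the derivative of $b\mapsto Z_b$ with the adjoint image. By adjunction on $Z_b\subset A_W$ one has $\omega_{Z_b}\cong L|_{Z_b}$, and the restriction $H^0(A_W,L)\to H^0(Z_b,\omega_{Z_b})$, pulled back by $q_W\circ\phi_b$, realizes the moving part of $|L|$ (modulo the section cutting $Z_b$ and the $W$-translations) as the subspace $\lambda^nW_b\subset H^0(X_b,\det\Omega^1_{X_b})$. Under this dictionary the infinitesimal motion of $Z_b$ in $|L|$, which is computed by the cup product of the Kodaira--Spencer class $\xi_b$ with the defining section, is exactly the adjoint image $[\omega_{\xi_b,W_b,\sB_b}]$. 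Thus the hypothesis $\omega_{\xi_b,W_b,\sB_b}\in\lambda^nW_b$, i.e. $[\omega_{\xi_b,W_b,\sB_b}]=0$, means that $Z_b$ is infinitesimally fixed in $|L|$ up to translation; equivalently, by the Adjoint Theorem (Theorem \ref{teoremaaggiunta}), $\xi_b$ is a deformation supported on $D_{W_b}$, the locus contracted by $\phi_b$, which cannot affect the image $Y_b$. Making this identification precise --- including the bookkeeping needed to choose the translation normalizing $Z_b$ holomorphically in $b$ --- is the delicate point.

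To conclude, the pointwise vanishing of the derivative together with the connectedness of $B$ forces $Z_b$ to be constant in $A_W$ up to translation. Letting $W$ range over a dense set of generic $(n+1)$-dimensional subspaces, the projections $q_W(Y_b)$ determine the cycle $Y_b\subset A$ up to translation, so all the $Y_b$ coincide up to translation in the fixed abelian variety $A$. Therefore $\Phi\colon\sX\to\sA$ is locally equivalent to a trivial family, and Proposition \ref{pomodoro} yields that the fibers $X_b$ are birationally equivalent.
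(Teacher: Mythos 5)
The paper offers no proof of this theorem at all: its ``proof'' is the single line ``See \cite[Theorem 1.5.3]{PZ}'', so the only meaningful comparison is with the argument of Pirola--Zucconi. Your outline does reproduce the architecture of that argument --- trivialize $\sA\cong A\times B$, project to the $(n+1)$-dimensional quotient $A_W$ determined by a generic $W$, study the motion of the divisor $Z_b=q_W(Y_b)$, vary $W$, and finish with Proposition \ref{pomodoro}. The difficulty is that the step you yourself flag as ``the delicate point'' is not bookkeeping: it is the entire content of the theorem, and as written it contains genuine gaps.

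Concretely: (a) the identification of $\lambda^nW_b$ with ``the moving part of $|L|$ (modulo the section cutting $Z_b$ and the $W$-translations)'' is false as stated. The space $\lambda^nW_b$ is an at most $(n+1)$-dimensional subspace of $H^0(X_b,\omega_{X_b})$, the pullback of $\bigwedge^nW\cong H^0(A_W,\Omega^n_{A_W})$, whereas $H^0(Z_b,L|_{Z_b})$ is in general much larger; so the hypothesis $[\omega_{\xi_b,W_b,\sB_b}]=0$ does not by itself say that the first-order displacement of $Z_b$ in $|L|$ is a combination of translations and a rescaling of its defining section. Bridging exactly this gap is what the volumetric computation of \cite{PZ} accomplishes, and you assert it rather than prove it. (b) The alternative route offered in the same sentence --- ``by the Adjoint Theorem, $\xi_b$ is supported on $D_{W_b}$, the locus contracted by $\phi_b$, which cannot affect the image $Y_b$'' --- begs the question twice: $D_{W_b}$ is the fixed divisor of $|\lambda^nW_b|$ and nothing in the hypotheses makes it contracted by $\phi_b$; and the implication ``a deformation supported on a divisor does not move the image, hence the fibers are birational'' is precisely what must be proved (the whole point of Theorems B and C is that further hypotheses are needed to extract birationality from ``supported on $D$''). (c) In the final step the normalizing translations of $A_W$ depend on both $W$ and $b$; to invoke Proposition \ref{pomodoro} you must show they glue to a single holomorphic section of $p\colon\sA\to B$ translating $Y_b$, which is an additional nontrivial argument. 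So the proposal is a reasonable reconstruction of the strategy of \cite[Theorem 1.5.3]{PZ}, but it cannot stand as a proof.
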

\proof See \cite[Theorem 1.5.3]{PZ}.\qed

\medskip
The following is our main result. It is a slightly more general 
version of Theorem C stated in the introduction. Note that it applies also to the case where the canonical 
linear system of a general fiber has fixed divisorial components and non trivial base locus.

\begin{thm}\label{teoremalocaletorelli} 
    Let $\Phi\colon\sX\to\sA$ be a family of Albanese type whose 
    associated family of $n$-dimensional irregular varieties $\pi\colon\sX\rightarrow 
    B$ satisfies extremal liftability conditions.
 Assume that there are no adjoint quadrics through the canonical 
 image of any fiber $X$ of $\pi\colon\sX\rightarrow B$. Then 
    the fibers of $\pi\colon\sX\rightarrow B$ are birational.
\end{thm}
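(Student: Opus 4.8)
The plan is to reduce Theorem \ref{teoremalocaletorelli} to the Volumetric Theorem (Theorem \ref{volteoremavolume}) by verifying its single hypothesis fiberwise, namely that $\omega_{\xi_b,W_b,\sB_b}\in\lambda^n W_b$ for a generic $(n+1)$-dimensional subspace of $H^0(A,\Omega^1_A)$ pulled back to each fiber. First I would observe that, because $\Phi$ is an Albanese type family, the fibers $\phi_b\colon X_b\to A_b$ are birational onto their images and the induced map on $1$-forms identifies $H^0(X_b,\Omega^1_{X_b})$ with $H^0(A,\Omega^1_A)$; since the family satisfies extremal liftability conditions, Corollary \ref{musica} applies and, up to shrinking $B$, gives a \emph{fixed} Abelian variety $A=\Alb(X_b)$ for all $b$. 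This is what makes a single generic $W\subset H^0(A,\Omega^1_A)$ meaningful across the whole base, and its pull-back $W_b\subset H^0(X_b,\Omega^1_{X_b})$ is then a generic $(n+1)$-dimensional subspace.

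Next I would fit the fiberwise picture into the machinery of Section 3 with $\sF=\Omega^1_{X_b}$, so that $\det\sF=\omega_{X_b}$, the linear system $\mP(\lambda^n H^0(X_b,\Omega^1_{X_b}))$ is the canonical system (or rather its image under $\lambda^n$), and $\phi_{|M_{\det\sF}|}$ is the canonical map whose image $Y$ is the canonical image of $X_b$. The Kodaira--Spencer class $\xi_b\in H^1(X_b,\Theta_{X_b})=H^1(X_b,(\Omega^1_{X_b})^\vee)$ is the extension class of the restricted family. The two conditions of extremal liftability are exactly what force the two vanishings needed to invoke Theorem \ref{torelloo1}: condition (i) gives surjectivity of $H^0(\sX,\Omega^1_\sX)\to H^0(X_b,\Omega^1_{X_b})$, which as noted in the text after Definition \ref{torellifamily} yields $\partial_{\xi_b}=0$; condition (ii) gives surjectivity on $n$-forms, and the analogous liftability argument shows $\partial^n_{\xi_b}(\omega)=0$ for every adjoint form $\omega$, since a globally liftable canonical form on the total space restricts to a form lying in the kernel of the coboundary $\partial^n_{\xi_b}\colon H^0(X_b,\omega_{X_b})\to H^1(X_b,\bigwedge^{n-1}\Omega^1_{X_b})$.

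With $\partial_{\xi_b}=0$ and $\partial^n_{\xi_b}(\omega)=0$ in hand, and with the hypothesis that there are no adjoint quadrics through the canonical image of $X_b$, Theorem \ref{torelloo1} gives $[\omega_{\xi_b,W_b,\sB_b}]=0$, that is $\omega_{\xi_b,W_b,\sB_b}\in\lambda^n W_b$. Because this holds for every $b\in B$ with the same generic $W$, the hypothesis of the Volumetric Theorem is met, and Theorem \ref{volteoremavolume} immediately yields that the fibers of $\pi\colon\sX\to B$ are birational. The final clause of the statement, concerning quadrics of rank $\le 2n+3$, follows from the remark already recorded in the text that every adjoint quadric has rank at most $2n+3$, so the absence of such quadrics through the canonical image certainly excludes adjoint ones.

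The main obstacle I anticipate is the careful passage from the global liftability hypotheses on the total space $\sX$ to the fiberwise vanishing of the coboundary maps $\partial_{\xi_b}$ and $\partial^n_{\xi_b}$ on the relevant forms. One must commute the restriction maps $H^0(\sX,\Omega^p_\sX)\to H^0(X_b,\Omega^p_{X_b})$ with the coboundary maps of the conormal sequence $0\to\sO_{X_b}\to\Omega^1_{\sX|X_b}\to\Omega^1_{X_b}\to 0$ (and its $n$-th wedge analogue), exactly as in the diagram chase of Proposition \ref{insalata}; the genericity of $W$ and the identification $D_\sF=D_W$ from \cite[Proposition 3.1.6]{PZ} must be invoked to pass from ``supported on $D_W$'' to the genuinely fiber-uniform statement required by the Volumetric Theorem. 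Everything else is a direct citation of Theorems \ref{torelloo1} and \ref{volteoremavolume}.
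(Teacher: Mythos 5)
Your proposal is correct and follows essentially the same route as the paper: reduce to the Volumetric Theorem \ref{volteoremavolume} by using Corollary \ref{musica} to fix the Albanese variety $A$ across fibers, use the extremal liftability conditions to get $\partial_{\xi_b}=0$ and $\partial^n_{\xi_b}(\omega)=0$, and then apply Theorem \ref{torelloo1} to conclude $\omega_{\xi_b,W_b,\sB_b}\in\lambda^n W_b$ for every $b$. The paper additionally spells out the preliminary reductions (base change to a one-dimensional disk with a section, Proposition \ref{pomodoro} to translate birationality of fibers into local triviality of the Albanese family, and the remark that $q>n+1$ guarantees $\dim\lambda^n W_b=n+1$ via \cite[Theorem 1.3.3]{PZ}), but these are refinements of the same argument rather than a different strategy.
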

\begin{proof} Since our claim is local in the analytic category, up to base change, we can assume
    that $B$ is a $1$-dimensional disk and that $\pi\colon\sX\rightarrow B$ has 
    a section. More precisely we take two points in $B$ and a curve $C$ connecting them, then make a base change from $B$ to $C$. 
    Therefore in the rest of this proof $B$ is a curve. By proposition \ref{insalata} 
    $p\colon\sA\to B$ is trivial. Moreover the Albanese family ${\rm{alb}}(\sX)\colon \sX\to  {\rm{Alb}}(\sX)$ 
    exists and by proposition \ref{pomodoro}, our claim is equivalent to show that  the Albanese family 
    ${\rm{alb}} (\sX) \colon \sX\to  {\rm{Alb}} (\sX)$
 is locally equivalent to the trivial family. By Corollary 
 \ref{musica}  we can assume that ${\rm{Alb}}(\sX)=A\times B$ too.

We denote by $\xi_b\in H^1(X_b,\Theta_{X_b})$  the class associated to the infinitesimal deformation of $X_b$ induced
by $\pi\colon\sX\rightarrow B$. First we assume that $q>n+1$.

Let $\sB:=\{dz_1,\ldots,dz_{n+1}\}$ be a 
basis of an $n+1$-dimensional generic subspace $W$ of $H^0( A,\Omega^{1}_{A})$. 
For every $b\in B$ let $\eta_i(b):= {\rm{alb}}(X_b)^{\star}dz_i$, $i=1,\ldots, n+1$. By standard
theory of the Albanese morphism it holds that $\sB_b:=\{ 
\eta_1(b),\ldots, \eta_{n+1}(b)\}$ is a basis of the pull-back $W_{b}$ 
of $W$ inside $H^0(X_b,\Omega_{X_{b}}^1)$. 
Let $\omega_i (b):= 
\lambda^{n}(\eta_1(b)\wedge\ldots\wedge\eta_{i-1}(b)\wedge{\widehat{\eta_i(b)}}\wedge\ldots\wedge\eta_{n+1}(b))$ for $i=1,\ldots, n+1$. 
Since $\Phi\colon\sX\to\sA$ is a family of Albanese type, 
${\rm{dim}}\lambda^nW_{b}\geq 1$, actually if $q>n+1$ by \cite[Theorem 1.3.3]{PZ} it follows
that $\lambda^nW_{b}$ has dimension $n+1$,
and we can write:
$\lambda^nW_{b}=\langle \omega_1(b),\ldots,\omega_{n+1}(b)\rangle$. 
Let $\omega_b:=\omega_{\xi_{b},W_{b}, \sB_{b}}$ be an adjoint image of $W_b$.

By Theorem \ref{torelloo1} it follows that $\omega_b\in\lambda^nW_{b}$. By Theorem \ref{volteoremavolume} we 
conclude. 
\end{proof}

The above theorem gives, in particular, an answer to the generic Torelli problem if 
the fibers of $\pi\colon\sX\rightarrow B$ are smooth minimal with unique 
minimal model. Indeed we have:

\begin{cor}\label{finali}
    Let $\pi\colon\sX\to B$ be a family of $n$-dimensional irregular 
    varieties which satisfies extremal liftability conditions. Assume 
    that every fiber $X$ is minimal, it has a unique minimal 
    model and its Albanese morphism has degree $1$. If there are no 
    adjoint quadrics containing the canonical image of $X$, then 
    the generic Torelli theorem holds for $\pi\colon 
    \sX\to B$ (assuming that the Kodaira-Spencer map is generically 
    injective). In particular the claim holds if no quadric of rank $\leq 2n+3$ 
    contains the canonical image of $X$.
\end{cor}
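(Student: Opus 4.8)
The plan is to read \lq\lq generic Torelli holds\rq\rq\ as the assertion that the period map $\sP^{n,n}\colon B\to\mathbb G$ is generically injective, and to argue by contradiction. The core case to treat is the one in which the general fibre of $\sP^{n,n}$ is positive dimensional, so that through a general $b\in B$ there passes a curve $C\subset B$ along which the weight-$n$ Hodge structure $F^{\bullet}H^{n}(X_{b})$ is constant. Restricting the family to $C$ produces $\pi_{C}\colon\sX_{C}\to C$ with Kodaira--Spencer classes $\xi_{b}\in H^{1}(X_{b},\Theta_{X_{b}})$, and the constancy of $\sP^{n,n}$ says precisely that $d\pi^{0}_{n}(\xi_{b})=\partial_{\xi_{b}}^{n}=0$ for every $b\in C$; over a curve this is equivalent to the surjectivity $H^{0}(\sX_{C},\Omega_{\sX_{C}}^{n})\twoheadrightarrow H^{0}(X_{b},\Omega_{X_{b}}^{n})$, i.e.\ to condition (ii) of the extremal liftability conditions for $\pi_{C}$.

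Next I would upgrade this to the \emph{full} extremal liftability of $\pi_{C}$, that is, also to condition (i), $\partial_{\xi_{b}}=0$. The point is that for an $n$-dimensional irregular variety whose Albanese morphism has degree $1$ the polarized weight-$n$ Hodge structure determines the weight-$1$ one, equivalently the Albanese variety together with its polarization $(\ref{polarizzazione})$: the cup product realizes $\bigwedge^{n}H^{1}(X)$ inside $H^{n}(X)$, with $(n,0)$-part the image $\lambda^{n}H^{0}(X,\Omega_{X}^{1})$, and the polarization of the Albanese is induced from the intersection form on $H^{n}(X)$. Hence along $C$ the Albanese variety $A=\Alb(X_{b})$ is locally constant, the associated family of abelian varieties is trivial, and the weight-$1$ period is constant as well; this yields $\partial_{\xi_{b}}=0$, so $\pi_{C}\colon\sX_{C}\to C$ satisfies both extremal liftability conditions.

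Now the hypotheses of Theorem \ref{teoremalocaletorelli} are in force for $\pi_{C}$: the fibres are $n$-dimensional irregular of general type, their Albanese morphism has degree $1$, extremal liftability holds, and by assumption there are no adjoint quadrics through the canonical image of any fibre. Therefore the fibres of $\pi_{C}$ are all birational. Since every fibre is minimal and has a \emph{unique} minimal model, birational minimal varieties are isomorphic, so $\pi_{C}$ is isotrivial. An isotrivial smooth family has vanishing Kodaira--Spencer map, because after an \'etale base change it becomes a product; thus the Kodaira--Spencer map of $\pi_{C}$, which is the composite $T_{C,b}\to T_{B,b}\to H^{1}(X_{b},\Theta_{X_{b}})$, vanishes on $T_{C,b}\neq 0$. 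This contradicts the generic injectivity of the Kodaira--Spencer map at the general point $b$, and shows that $\sP^{n,n}$ has no positive-dimensional fibre.

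The step I expect to be the main obstacle is the upgrade from $\partial_{\xi_{b}}^{n}=0$ to full extremal liftability, i.e.\ showing that constancy of the weight-$n$ period forces the Albanese variety to be constant along $C$; this is exactly where the degree-$1$ Albanese hypothesis and the irregularity $q\geq n+1$ of the ambient theory enter, guaranteeing that $\lambda^{n}H^{0}(X,\Omega_{X}^{1})$ is large enough to pin down the sub-Hodge-structure generated by $H^{1}$ inside $H^{n}$. A secondary difficulty is the reduction to positive-dimensional period fibres: one must still rule out that $\sP^{n,n}$ is generically finite of degree $\geq 2$, with isolated isoperiodic points $b_{1}\neq b_{2}$ that cannot be joined inside a single fibre of $\sP^{n,n}$; for these I would argue directly that the isomorphism of polarized Hodge structures identifies the Albanese varieties and, together with the absence of adjoint quadrics, forces $X_{b_{1}}\simeq X_{b_{2}}$, again contradicting the generic injectivity of the Kodaira--Spencer map.
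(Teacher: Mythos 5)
You have not proved the corollary as it is stated. Its hypotheses already include that $\pi\colon\sX\to B$ satisfies the extremal liftability conditions, and the paper's own proof is a one-line observation: Theorem \ref{teoremalocaletorelli} applies verbatim and makes all fibers birational, and minimality together with uniqueness of the minimal model upgrades ``birational'' to ``isomorphic''. Your argument never uses this hypothesis; instead you set out to prove the much stronger assertion that for an arbitrary family with generically injective Kodaira--Spencer map the period map $\sP^{n,n}$ is generically injective, by manufacturing the extremal liftability conditions on an isoperiodic subfamily $\pi_C\colon\sX_C\to C$. That is a more ambitious route than the paper's, and it is precisely where the proof breaks down.

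The fatal step is the ``upgrade'' from condition (ii) to condition (i): from $\partial^{n}_{\xi_b}=0$ along $C$ you claim $\partial_{\xi_b}=0$, i.e.\ constancy of the weight-one period and of the Albanese variety, on the grounds that cup product realizes $\bigwedge^{n}H^{1}(X)$ inside $H^{n}(X)$. This is an assertion, not an argument. Even granting that $\bigwedge^{n}H^{1}$ injects into $H^{n}$ as a constant sub-local system along $C$, you would still need to show that the (constant) induced Hodge structure on $\bigwedge^{n}H^{1}(A)$ determines the weight-one Hodge structure on $H^{1}(A)$ --- itself a Torelli-type statement requiring proof --- and, more to the point, you need the \emph{infinitesimal} version: that $\xi\cup(\eta_{1}\wedge\cdots\wedge\eta_{n})=0$ in $H^{1}(X,\Omega^{n-1}_X)$ for all $\eta_i\in H^0(X,\Omega^1_X)$ forces $\xi\cup\eta=0$ in $H^{1}(X,\sO_X)$. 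Since the contraction is a derivation, the first condition is a sum of products $\sum\pm(\xi\cup\eta_i)\cdot(\eta_1\wedge\cdots\widehat{\eta_i}\cdots\wedge\eta_{n})$ whose vanishing does not formally give the vanishing of each $\xi\cup\eta_i$. Without $\partial_{\xi_b}=0$ you cannot even choose $W\subset\Ker\partial_{\xi_b}$ of dimension $n+1$, so the adjoint machinery of Theorem \ref{teoremalocaletorelli} never starts; hypothesis (i) of the corollary is exactly what spares the authors this step. A second, independent gap is your closing paragraph: the adjoint-form and Volumetric-Theorem apparatus (Theorem \ref{volteoremavolume}) only compares fibers joined by a family with prescribed Kodaira--Spencer classes, so ``arguing directly'' that two isolated isoperiodic fibers $X_{b_1}$, $X_{b_2}$ are isomorphic has no support in the paper, and the case of a generically finite period map of degree $\geq 2$ is left unproved.
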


\begin{proof} Since the fiber $X$ has a unique minimal model the 
    claim follows directly by Theorem \ref{teoremalocaletorelli}.
    \end{proof}

\subsection{Examples}

As a very simple exercise on our theory, the reader can prove 
the infinitesimal Torelli theorem for the 
product $X$ of two non-hyperelliptic curves of genus $3$ or a general 
curve of genus $4$ using Corollary \ref{torelloooo} and without assuming 
its validity in the case of curves.

Theorem C, and consequently Corollary \ref{finali},  applies directly to the
families whose fiber $X$ has canonical map {\it{which is not}} an 
isomorphism and $X$ is an irregular variety such that its canonical image $Y$
is a (possibly very singular) hypersurface of degree $>2$ 
or a (possibly very singular) complete intersection
of hypersurfaces of degree $>2$: see also 
our remark \ref{guardaecco}. 
Many of these examples are not well studied in the literature.

%As far as other possible examples are concerned, 
%we remind the reader that the space $\sQ_{k,n}$ of quadrics of the 
%projective space $\mP^{n}$ of rank $\leq k$ has dimension 
%$k(n-k+1)+\binom{k+1}{2}-1$. If we consider, 
%for example, minimal irregular surfaces whose canonical map is (to greatly simplify) an embedding, the 
%canonical image is $2$-normal, and for the Neron Severi group it holds ${\rm{NS}}(S)=[K_S]\mathbb{Z}$, then, letting $n=p_{g}(S)-1$, by 
%Riemann-Roch theorem the dimension of the vector space of quadrics containing 
%$S=Y$ is
%$h^{0}(\mP^{n},\sI_{S/\mP^{n}}(2))=
%\binom{n+2}{2}-K^{2}_{S}-\chi(\sO_{S})$. In the surface case $k=2n+3=7$. 
%Now, imposing the natural condition $K^{2}_{S}+\chi(\sO_{S})\geq 
%7n-15$, it is 
%at least reasonable, by the obvious dimensional computation, to expect that, 
%for such a generic irregular surface, $\sQ_{7,n}$ is a direct summand of 
%$H^{0}(\mP^{n},\sI_{S/\mP^{n}}(2))$. Then Theorem C applies to give 
%generic Torelli theorem for such irregular surfaces of general type 
%(with Albanese map of degree $1$ and irregularity $\geq 4$). 

As far as other possible examples are concerned, the class of
minimal irregular surfaces with very ample and primitive canonical bundle should be worthy of study. Indeed 
we remind the reader that the space $\sQ_{k,n}$ of quadrics of the 
projective space $\mP^{n}$ of rank $\leq k$ has dimension 
$k(n-k+1)+\binom{k+1}{2}-1$, and  letting $n=p_{g}(S)-1$, by 
Riemann-Roch theorem the dimension of the vector space of quadrics containing 
$S=Y$ is
$h^{0}(\mP^{n},\sI_{S/\mP^{n}}(2))=
\binom{n+2}{2}-K^{2}_{S}-\chi(\sO_{S})$. Now to apply our theory in the case $n=2$ we have to take $k=2n+3=7$; hence, 
imposing the natural condition $K^{2}_{S}+\chi(\sO_{S})\geq 
7n-15$, it is 
at least reasonable, by the obvious dimensional computation, to expect that, 
for such a generic irregular surface, $\sQ_{7,n}$ is a direct summand of 
$H^{0}(\mP^{n},\sI_{S/\mP^{n}}(2))$. Then Theorem C applies to give 
generic Torelli theorem for such irregular surfaces of general type 
(with Albanese map of degree $1$ and irregularity $\geq 4$). 

Clearly if $X$ is a variety such that there exists a divisor $D$ with $h^0(X,\sO_X(D))\geq 2$ and $h^0(X,\sO_X(K_X-D))\geq 2$ 
then the usual argument of the Petri map for curves gives a quadric of rank 3 or 4 containing the canonical image. It is also easy to construct varieties such that this $D$ exists. Nevertheless our theorem requires that no {\it{adjoint quadrics}} exist, so it can happen that those low rank quadrics obtained by Petri-like  arguments are not adjoint quadrics. We think
that the geometry of $1$-forms and of the quadrics of low rank through the canonical image is an interesting question 
as the problem of finding conditions such that 
$\sQ_{7,n}\cap H^{0}(\mP^{n},\sI_{S/\mP^{n}}(2))=\{0\}$.

\end{document}